\documentclass[a4paper,12pt]{amsart}
\usepackage{lipsum}
\usepackage{amssymb,amsthm}
\usepackage[foot]{amsaddr}
\usepackage{amsmath}
\usepackage{tikz}
\usetikzlibrary{quotes,angles}
\usepackage{graphicx}
\usepackage{subcaption}
\usepackage[shortlabels]{enumitem}
\usetikzlibrary{arrows}
\usepackage{float}
\usepackage{graphicx}
\usepackage{subcaption}
\usepackage{hyperref}
\usepackage[section]{placeins}
\graphicspath{{Figures/}}
\usetikzlibrary{decorations.pathmorphing}
\tikzset{snake it/.style={decorate, decoration=snake}}
\usepackage{pgfplots}

\pgfplotsset{compat=1.18}
\makeatletter
\newcommand*{\rom}[1]{\expandafter\@slowromancap\romannumeral #1@}
\makeatother

\numberwithin{equation}{section}

\theoremstyle{plain}
\newtheorem{theorem}{Theorem}
\numberwithin{theorem}{section}
\newtheorem{proposition}[theorem]{Proposition}
\newtheorem{lemma}[theorem]{Lemma}
\newtheorem{corollary}[theorem]{Corollary}
\theoremstyle{definition}
\newtheorem{definition}[theorem]{Definition}
\theoremstyle{remark}
\newtheorem{remark}[theorem]{Remark}

\theoremstyle{remark}

\theoremstyle{remark}

\newtheorem{assumption}[theorem]{Assumption}

\newcommand{\al}{\alpha}
\newcommand{\bt}{\beta}

\newcommand{\s}{\mathcal S}

\newcommand{\smo}{\setminus \mathbf{0}}

\newcommand{\norm}[1]{\left\lVert#1\right\rVert}      
\newcommand{\abs}[1]{\left|#1\right|}                 
\newcommand{\paren}[1]{\left(#1\right)}               
\newcommand{\sparen}[1]{\left\{#1\right\}}      

\renewcommand{\d}{\,\mathrm{d}}  

\newcommand{\dd}{\mathrm{d}}  

\newcommand{\Cc}{\mathcal{C}}

\newcommand{\Dc}{\mathcal{D}}
\newcommand{\Ec}{\mathcal{E}}
\newcommand{\Fc}{\mathcal{F}}

\newcommand{\Sc}{\mathcal{S}}

\newcommand{\WF}{\mathrm{WF}} 

\newcommand{\wf}{\mathrm{WF}}                         

\newcommand{\va}{\mathbf{a}}

\newcommand{\partyf}[2]{\frac{\partial #2}{\partial y_{#1}}}

\newcommand{\vv}{{\mathbf{v}}}

\newcommand{\bpm}{\begin{pmatrix}}
\newcommand{\epm}{\end{pmatrix}}

\newcommand{\vx}{{\mathbf{x}}}

\newcommand{\vy}{{\mathbf{y}}}
\newcommand{\vz}{{\mathbf{z}}}

\newcommand{\vsig}{{\boldsymbol{\sigma}}} 

\newcommand{\rr}{{{\mathbb R}}}

\newcommand{\rn}{{{\mathbb R}^n}}

\newcommand{\be}{\begin{equation}}
\newcommand{\bea}{\begin{eqnarray}}
\newcommand{\eea}{\end{eqnarray}}
\newcommand{\bean}{\begin{eqnarray*}}
\newcommand{\eean}{\end{eqnarray*}}

\newcommand{\bel}[1]{\begin{equation}\label{#1}}
\newcommand{\ee}{\end{equation}}
\newcommand{\eel}[1]{{\label{#1}\end{equation}}}

\newcommand{\ssupp}{\text{ssupp}}

\usepackage{fullpage}

\usepackage{kbordermatrix}
\renewcommand{\kbldelim}{(}
\renewcommand{\kbrdelim}{)}

\usepackage{datetime}
\usetikzlibrary{quotes, angles}

\newcommand\irregularcircle[2]{
  \pgfextra {\pgfmathsetmacro\len{(#1)+rand*(#2)}}
  +(0:\len pt)
  \foreach \a in {10,20,...,350}{
    \pgfextra {\pgfmathsetmacro\len{(#1)+rand*(#2)}}
    -- +(\a:\len pt)
  } -- cycle
}

\title[short]{Microlocal analysis of a non-linear cone transform and applications to Compton camera imaging\\
}
\author{James W. Webber\textsuperscript{$\dagger$}}
\author{Sean Holman\textsuperscript{$\ddagger$}}
\address[James W. Webber (corresponding author)]{Department of Biomedical Engineering, Cleveland Clinic, USA}
\address[Sean Holman]{Department of Mathematics, The University of Manchester, Alan Turing Building, Oxford Road, Manchester M13 9PY, UK}
\email[A1,A2]{webberj5@ccf.org\textsuperscript{$\dagger$}, sean.holman@manchester.ac.uk\textsuperscript{$\ddagger$}}

\providecommand{\keywords}[1]
{
  \small	
  \textbf{\textit{Keywords---}} #1
}

\begin{document}

\begin{abstract}
We present a novel method to recover the source intensity, $f : \mathbb{R}^n \to \mathbb{R}$, and attenuation coefficient, $\mu : \mathbb{R}^n \to \mathbb{R}$, in Compton camera imaging. We apply a non-linear model, which accounts for ray attenuation. We show that the data, $h$, can be modeled $h = \mathcal{R}(f,\mu) = R(fg)$, where $g = \exp(-G\mu)$ models attenuation, $G$ is a (linear) divergent beam transform, and $R$ is a linear operator which defines the integrals of $fg$ over cones \cite{kuchment2017inversion}. Commonly in the literature, $\mu$ is set to zero, and the data $h = Rf$ is linear. We address the case when $\mu \neq 0$ and the transform is non-linear. To simplify the analysis, using similar ideas to \cite{kuchment2017inversion}, we first transform the data into weighted line integrals, $\tilde{h} = \mathcal{D}_k(f,\mu) = D_k(fg)$, where $D_k$ is a weighted ray transform. Assuming practically reasonable geometric conditions, we show that $\tilde{h} = \exp(-X_{w_1}\mu)X_{w_2}f$, where $X_w$ is a weighted X-ray transform, and the $w_i$ are smooth weights. After which, we use the theory of conormal distributions in a similar vein to \cite{palacios2018quantitative}, to describe the singularities of $\tilde{h}$. We show that there are artifacts in the reconstruction, and we quantify their strength using Sobolev spaces. We combine this theory with a geometric argument to recover the edges of $f$ and ultimately prove that $f$ and $\mu$ are unique to $h$. The recovery of $f$ is notably more stable than that of $\mu$, which we also discuss. To validate our theory, we present simulated reconstructions of $f$ and $\mu$ using the proposed method.

\end{abstract}
\maketitle

\keywords{{\it{\textbf{Keywords}}}} - microlocal analysis, non-linear cone transforms

\section{Introduction}
In this paper, we present novel microlocal analysis of a non-linear cone transform, $\mathcal{R}(f,\mu)$, related to Compton camera imaging, where the quantities of interest are source density $f$ and attenuation $\mu$. Up to this point, to the best of our knowledge, only linear cone transforms have been addressed microlocally in this field. The attenuation of the rays creates the non-linearity, and we show this leads to additional singularities in the data that are not present under the linear formulation (i.e., when attenuation is neglected). We analyze specifically what happens when we apply a linear reconstruction operator to the non-linear data, and our first aim is to recover $f$. We show that there are artifacts in the reconstruction and quantify their strength on Sobolev scale, and we derive methods to suppress or remove the artifacts. Once $f$ is recovered, we provide conditions so that $\mu$ is unique to the data. Specifically, we show that $\mu$ is recoverable, although unstably due to limited data effects, akin to those typically seen in conventional limited-angle tomography \cite{frikel2013characterization,krishnan2014microlocal}.

The literature covers extensively linear cone and V-line (or broken-ray) transforms \cite{terzioglu2018compton, kuchment2016three, kuchment2017inversion, moon2025inversion, terzioglu2019some, AmbartsoumianLatifi-Vline2019, vline, morvidone2010v, ambartsoumian2012inversion, terzioglu2020exact, zhang2020recovery}. In \cite{kuchment2017inversion}, the authors consider a weighted cone transform, and show that it is essentially equivalent to a type of weighted divergent beam transform, where the end point of the divergent beam is the same as the apex of the cone. They derive inversion formulae for the divergent beam transform under Tuy's condition, and combine these ideas to invert the cone transform. Relations to the classical Radon transform are also later presented. In a similar way, the authors in \cite{terzioglu2020exact} derive a closed-form formula which relates cone integral data to plane integrals in $\mathbb{R}^3$, and they use this and a variation of Tuy's condition (denoted the ``Compton admissability condition") to invert the cone transform when the cone vertices are constrained to a curve.

In \cite{moon2025inversion}, the authors consider an attenuated cone transform with radial weight and where the attenuation coefficient ($\mu$) is constant. The cone vertices are contrained to either a plane or cylinder. They decompose the operator into a spherical section transform and attenuated ray transform, and then derive closed-form inversion formulae for each transform separately combining these together to invert the cone transform. While attenuation is included in the model, $\mu$ is likely not constant in many applications of interest. We consider a different case in which $\mu$ varies and includes discontinuities.

Microlocal analysis of a linear cone transform, $C_k$, is covered in \cite{terzioglu2019some}. There the authors assume they have access to all integrals of a function $f : \mathbb{R}^n \to \mathbb{R}$ over cones in $\mathbb{R}^n$, where $n$ is the dimension, and thus the data has dimension $2n$ (an overdetermined problem). They analyze $C_k$ as a Fourier Integral Operator (FIO) and go on to show that $C_k^*C_k$ is a pseudodifferential operator ($\Psi$DO) order $1-n$, which means there are no unwanted singularities (artifacts) added in a filtered backprojection type reconstruction.

In \cite{zhang2020recovery}, the authors analyze microlocally a weighted cone transform $I_\kappa$ in $\mathbb{R}^3$, where $\kappa$ is a smooth weight. The cone vertices are limited to a smooth 2-D surface. Under stated conditions on $\kappa$, they show that $I_\kappa^*I_\kappa$ acts as an elliptic $\Psi$DO near visible singularities (i.e., those which are detectable in the data). Since $I_\kappa^*I_\kappa$ is elliptic, this means one can stably recover the visible singularities, and the authors prove this. They later show that their analysis can be applied when the cone vertices are restricted to a 1-D curve and the cone opening angle remains fixed. In our case, standard linear FIO theory does not apply since our integral weights depend on $\mu$, which can have singularities.

We introduce a novel non-linear cone transform $\mathcal{R}(f,\mu)$, where the cone vertices, similar to \cite{zhang2020recovery}, are constrained to lie on a smooth $(n-1)$-dimensional surface. More precisely, the transform is linear in $f$ and non-linear in $\mu$. The cone orientation and opening angle is unrestricted. We assume that $f$ and $\mu$ are the finite sums of characteristic type functions with disjoint supports, allowing for more general modeling of $\mu$ than previously considered. This is important, as unless the attenuating material is very small in size, there will be significant signal loss due to ray attenuation. While attenuation is addressed somewhat in \cite{moon2025inversion}, constant $\mu$ does not account for smooth variations or possible singularities. We address this here.

To simplify the analysis, using similar ideas to that of \cite{kuchment2017inversion,terzioglu2020exact}, we first convert $\mathcal{R}(f,\mu)$ into weighted ray integral data $\mathcal{D}_k(f,\mu)$, where $\mathcal{D}_k$ is a weighted divergent beam transform. This transform is similar to that considered in the SPECT identification problem \cite{holman2020spect,SeanHolman,natterer1981identification,solmon1995identification,stefanov2014identification}. In \cite{SeanHolman}, the authors show that $f$ and $\mu$ are unique to the data under certain geometric conditions. For example, they assume that the singularities of $\mu$ are visible although this is not generally true in our application. Further, in \cite{SeanHolman}, only 2-D imaging is considered. We address the $n$-D case. Methods of \cite{SeanHolman} could reproduce some of the results of this manuscript, notably the determination of the boundary of $\Omega$ considered in section \ref{sec:bdet}, but not the precise determination of Sobolev order of reconstructed singularities completed in sections \ref{sec:bdet} and \ref{sing_recon} nor the unique recovery of $\mu$ as shown in section \ref{sec:sim}.

To begin our analysis, we first establish our geometric assumptions, which are practically reasonable.  One of our key assumptions is that $\text{supp}(\mu)$ and the convex hull of $\text{supp}(f)$ are disjoint, which means there is no attenuating material in the non-convex ``grooves" of the gamma ray source. Under this condition, we show that $\tilde{h} = \mathcal{D}_k(f,\mu) = \exp(-X_{w_1}\mu)X_{w_2}f$, where $X_w$ is a weighted X-ray transform, and the $w_i$ are smooth weights. This is essentially a product of distributions with singularities on intersecting smooth manifolds, which have been studied previously in, e.g., \cite{palacios2018quantitative}, and by Hormander in \cite[Theorem 8.2.10]{hormanderI}. Using these ideas and the theory of conormal distributions, we describe the singularities of $\tilde{h}$, and aim first to recover $f$. We show that the non-linearities in the data lead to artifacts in the reconstruction. To address this, we use a geometric argument to discern the true edges of $f$ from the artifacts, and present a method to recover $f$. We later expand the theory of Natterer \cite[Theorem 3.3]{natterer} to derive uniqueness results for $\mu$, and ultimately provide conditions so that $f$ and $\mu$ are unique to $\mathcal{R}(f,\mu)$. To validate this theory, we present simulated image reconstructions in the case when $n = 2$.

The remainder of this paper is organized as follows. In section \ref{sect:defns}, we state some definitions and preliminary theorems that we will need later. In section \ref{main}, we state our main assumptions and show how to recover $f$ from $\mathcal{R}(f,\mu)$. In section \ref{sec:sim}, we generalize \cite[Theorem 3.3]{natterer}, and use the theorem to derive uniqueness results for $\mu$. Finally, in section \ref{results}, we present simulated image reconstructions with added noise, and go through a worked example which showcases the key steps of our method.

\section{Definitions}\label{sect:defns} 

In this section, we review some theory from microlocal analysis which will be used in our theorems. For readers who prefer to skip straight to the main material on the non-linear cone transform, it is possible to skip this section and refer back as the definitions and results are needed.

We first provide some
notation and definitions.  Let $X$ and $Y$ be open subsets of
{$\mathbb{R}^{n_X}$ and $\mathbb{R}^{n_Y}$, respectively.}  Let $\Dc(X)$ be the space of smooth functions compactly
supported on $X$ with the standard topology and let $\mathcal{D}'(X)$
denote its dual space, the vector space of distributions on $X$. This notation should not be confused with the nonlinear operator introduced in \eqref{trans_data}. Let
$\Ec(X)$ be the space of all smooth functions on $X$ with the standard
topology and let $\mathcal{E}'(X)$ denote its dual space, the vector
space of distributions with compact support contained in $X$. Finally,
let $\Sc(\rn)$ be the space of Schwartz functions, that are rapidly
decreasing at $\infty$ along with all derivatives and $\mathcal{S}'(\mathbb{R}^n)$ the corresponding space of tempered distributions. See \cite{Rudin:FA}
for more information. 


We now list some notation conventions that will be used throughout this paper:
\begin{enumerate}
\item For a function $f$ in the Schwartz space $\Sc(\mathbb{R}^{n_X})$, we {write
\[
\mathcal{F}f(\xi) = \int_{\mathbb{R}^{n_X}} e^{-i x\cdot \xi}f(x)\ \dd x,\quad \mathcal{F}^{-1}f(x) = \frac{1}{(2 \pi)^{n_x}}\int_{\mathbb{R}^{n_X}} e^{i x\cdot \xi}f(\xi)\ \dd \xi
\]
for} the Fourier transform and inverse Fourier transform of $f$,
respectively, {and extend these in the usual way to tempered distributions $\Sc'(\mathbb{R}^{n_x})$} (see \cite[Definition 7.1.1]{hormanderI}). We will use the notation $\hat{f} = \mathcal{F}f$. 

\item We use the standard multi-index notation: if
$\al=(\al_1,\al_2,\dots,\al_n)\in \sparen{0,1,2,\dots}^{n_X}$
is a multi-index and $f$ is a function on $\mathbb{R}^{n_X}$, then
\[\partial^\al f=\paren{\frac{\partial}{\partial
x_1}}^{\al_1}\paren{\frac{\partial}{\partial
x_2}}^{\al_2}\cdots\paren{\frac{\partial}{\partial x_{n_X}}}^{\al_{n_X}}
f.\] If $f$ is a function of $(\vy,\vx,\vsig)$ then $\partial^\al_\vy f$ and $\partial^\al_\vsig f$ are defined similarly. We use the notation $|\alpha| = \sum_{i=1}^n \alpha_i$.

\item \label{item:T*Xident} We identify the cotangent
spaces of Euclidean spaces with the underlying Euclidean spaces. For example, the cotangent space, 
$T^*(X)$, of $X$ is identified with $X\times \mathbb{R}^{n_X}$. If $\Phi$ is a function of $(\vy,\vx,\vsig)\in Y\times X\times \rr^N$,
then we define $\dd_{\vy} \Phi = \paren{\partyf{1}{\Phi},
\partyf{2}{\Phi}, \cdots, \partyf{{n_X}}{\Phi} }$, and $\dd_\vx\Phi$ and $
\dd_{\vsig} \Phi $ are defined similarly. Identifying the cotangent space with the Euclidean space as mentioned above, we let $\dd\Phi =
\paren{\dd_{\vy} \Phi, \dd_{\vx} \Phi,\dd_{\vsig} \Phi}$.


\end{enumerate}

\noindent The singularities of a function and the directions in which they occur
are described by the wavefront set \cite[page
16]{duistermaat1996fourier}, which we now define.
\begin{definition}
\label{WF} Let $X$ be an open subset of $\mathbb{R}^{n_X}$ and let $f$ be a
distribution in $\mathcal{D}'(X)$.  Let $(\vx_0,\xi_0)\in X\times
(\rr^{n_X}\smo)$.  Then $f$ is \emph{smooth at $\vx_0$ in direction $\xi_0$} if
there exists a neighborhood $U$ of $\vx_0$ and $V$ of $\xi_0$ such
that for every $\Phi\in \Dc(U)$ and $N\in\mathbb{R}$ there exists a
constant $C_N$ such that for all $\xi\in V$ {and $\lambda >1$},
\begin{equation}
\left|\Fc(\Phi f)(\lambda\xi)\right|\leq C_N(1+\abs{\lambda})^{-N}.
\end{equation}
The pair $(\vx_0,\xi_0)\in X\times
(\rr^{n_X}\smo)$ is in the \emph{wavefront set} of $f$, $\wf(f)$, if
$f$ is not smooth at $\vx_0$ in direction $\xi_0$. We define for fixed $\vx_0$, $\WF_{\vx_0}(f) = \{\xi : (\vx_0,\xi) \in \WF(f)\}$, and we define the singular support of $f$, $\text{ssupp}(f)$, as the natural projection of $\WF(f)$ onto $X$.
\end{definition}

Intuitively, the elements 
$(\vx_0,\xi_0)\in \WF(f)$ are the point-normal vector pairs at
which $f$ has singularities; $\vx_0$ is the location of the 
  singularity, and  $\xi_0$ is the direction in which the
  singularity occurs.
  A
 geometric example of the wavefront set is given by the characteristic function $f$ of a domain $\Omega \subset \mathbb{R}^{n_X}$ with smooth boundary, which is $1$ on $\Omega$ and $0$ off of $\Omega$. Then the wavefront set is
\[
\WF(f) = \{(\vx,t\vv) \ : t \neq 0, \ \vx \in \partial \Omega, \ \mbox{$\vv$ is orthogonal to $\partial \Omega$ at $\vx$}\}.
\]
In other words, the wavefront set is the set of points in the boundary of $\Omega$ together with the nonzero normal vectors to the boundary. {The set of normals of the surface $\partial \Omega$ is a subset of the cotangent bundle $T^*X$, and here we are using the identification of $T^*X$ with $X \times \mathbb{R}^{n_X}$ mentioned above in point (\ref{item:T*Xident}).} The wavefront set is an important consideration in imaging since elements of the wavefront set will correspond to sharp features of an image.




The wavefront set of a distribution on $X$ is normally defined as a
subset of the cotangent bundle $T^*(X)$ so it is invariant under
diffeomorphisms, but we do not need this invariance, so we will
continue to identify $T^*(X) = X \times \rr^{n_X}$ and consider $\WF(f)$ as
a subset of $X\times (\rr^{n_X}\smo)$.


\begin{definition}[{\cite[Definition
        21.2.15]{hormanderIII}}] \label{phasedef}
A function $\Phi=\Phi(\vy,\vx,\vsig)\in
\Ec(Y\times X\times(\mathbb{R}^N\smo))$ is a \emph{phase
function} if $\Phi(\vy,\vx,\lambda\vsig)=\lambda\Phi(\vy,\vx,\vsig)$, $\forall
\lambda>0$ and $\mathrm{d}\Phi$ is nowhere zero. The
\emph{critical set of $\Phi$} is
\[\Sigma_\Phi=\{(\vy,\vx,\vsig)\in Y\times X\times(\mathbb{R}^N\smo)
: \dd_{\vsig}\Phi=0\}.\] 
 A phase function is
\emph{clean} if the critical set $\Sigma_\Phi$ is a smooth manifold {with tangent space defined {by} the kernel of $\mathrm{d}\,(\mathrm{d}_\sigma\Phi)$ on $\Sigma_\Phi$. Here, the derivative $\mathrm{d}$ is applied component-wise to the vector-valued function $\mathrm{d}_\sigma\Phi$. So, $\mathrm{d}\,(\mathrm{d}_\sigma\Phi)$ is treated as a Jacobian matrix of dimensions $N\times ({n_Y + n_X}+N)$.}
\end{definition}

\begin{definition}[{\cite[Definition 21.2.15]{hormanderIII} and
      \cite[section 25.2]{hormander}}]\label{def:canon} Let $X$ and
$Y$ be open subsets of $\rn$. Let $\Phi\in \Ec\paren{Y \times X \times
{\rr}^N}$ be a clean phase function.  In addition, we assume that
$\Phi$ is \emph{nondegenerate} in the following sense:
\[\text{$\dd_{\vy}\Phi$ and $\dd_{\vx}\Phi$ are never zero on
$\Sigma_{\Phi}$.}\]
  The
\emph{canonical relation parametrized by $\Phi$} is defined as
\begin{equation}\label{def:Cgenl} \begin{aligned} \Cc=&\sparen{
\paren{\paren{\vy,\dd_{\vy}\Phi(\vy,\vx,\vsig)};\paren{\vx,-\dd_{\vx}\Phi(\vy,\vx,\vsig)}}:(\vy,\vx,\vsig)\in
\Sigma_{\Phi}}{.}
\end{aligned}
\end{equation}
\end{definition}

 \begin{definition}[{\cite[Definition 7.8.1]{hormanderI}}] \label{ellip}We define
 $S^m(Y \times X, \mathbb{R}^N)$ to be the
set of $a\in \Ec(Y\times X\times \mathbb{R}^N)$ such that for every
compact set $K\subset Y\times X$ and all multi--indices $\al,
\bt, \gamma$ the bound
\[
\left|\partial^{\gamma}_{\vy}\partial^{\bt}_{\vx}\partial^{\al}_{\vsig}a(\vy,\vx,\vsig)\right|\leq
C_{K,\al,\bt,\gamma}(1+\norm{\vsig})^{m-|\al|},\ \ \ (\vy,\vx)\in K,\
\vsig\in\mathbb{R}^N,
\]
holds for some constant $C_{K,\al,\bt,\gamma}>0$. The elements of $S^m$ are called \emph{symbols} of order $m$.  Note
that this symbol class is  sometimes denoted $S^m_{1,0}$.
\end{definition}

\begin{definition}\label{FIOdef}
Let $X$ and $Y$ be open subsets of {$\mathbb{R}^{n_X}$ and $\mathbb{R}^{n_Y}$, respectively.} {Let an operator $A :
\Dc(X)\to \mathcal{D}'(Y)$ be defined by the distribution kernel
$K_A\in \mathcal{D}'(Y\times X)$, in the sense that
$Af(\vy)=\int_{X}K_A(\vy,\vx)f(\vx)\mathrm{d}\vx$. Then we call $K_A$
the \emph{Schwartz kernel} of $A$}. A \emph{Fourier
integral operator (FIO)} of order $\mu = m + N/2 - (n_X+n_Y)/4$ is an operator
$A:\Dc(X)\to \mathcal{D}'(Y)$ with Schwartz kernel given by an
oscillatory integral of the form
\begin{equation} \label{oscint}
K_A(\vy,\vx)=\int_{\mathbb{R}^N}
e^{i\Phi(\vy,\vx,\vsig)}a(\vy,\vx,\vsig) \mathrm{d}\vsig,
\end{equation}
where $\Phi$ is a clean nondegenerate phase function and $a$ is a
symbol in $S^m(Y \times X , \mathbb{R}^N)$. The \emph{canonical
relation of $A$} is the canonical relation $\mathcal{C}$ of $\Phi$ defined in
\eqref{def:Cgenl}. 
An FIO is called a \emph{pseudodifferential operator} if {$X = Y$ and} its canonical relation $\Cc$ is contained in the diagonal, i.e.,
$\Cc \subset \Delta := \{ (\vx,\xi;\vx,\xi)\}$.
\end{definition}


Fourier integral operators are defined in \cite{hormander} more generally as operators with Schwartz kernel locally given by expressions of the form \eqref{oscint} where the phase functions each give rise to pieces of the same global cannonical relation. However, the local expression \eqref{oscint} is sufficient for our purposes.


We have the definition of Sobolev spaces from \cite[page 200]{natterer}.

\begin{definition}\label{sobo_spaces}
We define the Sobolev space order $\al$,
\begin{equation}
H^\al(\mathbb{R}^n) = \{f\in \mathcal{S}'(\mathbb{R}^n) : (1 + |\xi|^2)^{\al/2} \hat{f} \in L^2(\mathbb{R}^n)\}
\end{equation}
with norm 
\begin{equation}\label{HRnnorm}
\|f\|_{H^{\al}(\mathbb{R}^n)} = \|(1 + |\xi|^2)^{\al/2} \hat{f}\|_{L^2(\mathbb{R}^n)}.
\end{equation}
\end{definition}

\noindent We now define local and microlocal Sobolev regularity and wavefront sets \cite{Q1993sing}.

\begin{definition} \label{def:SobolevWF}
A distribution $g$ is in $H^{\al}$ locally near a point $\vx_0$ if and only if there exists a cut-off function $\varphi \in C_c^{\infty}(\mathbb{R}^n)$ with $\varphi(\vx_0) \neq 0$ such that $\varphi g \in H^{\al}(\mathbb{R}^n)$. The distribution $g$ is in $H^{\al}$ microlocally near $(\vx_0,\xi_0)$ if and only if there is a cut-off function $\varphi \in C^{\infty}_c(\mathbb{R}^n)$ with $\varphi(\vx_0) \neq 0$ and function $u(\xi)$ homogeneous of degree zero and smooth on $\mathbb{R}^n\backslash \{0\}$ with $u(\xi_0)\neq 0$ such that $(1+|\xi|^2)^{\al/2}u(\xi)\mathcal{F}(\varphi g)(\xi) \in L^2(\mathbb{R}^n)$. We define the Sobolev wavefront set
\begin{equation}
\WF^{\al}(g) = \left\{ (\vx,\xi) \in \WF(g) : g\ \text{is not in}\ H^{\al}\ \text{microlocally near}\ (\vx, \xi) \right\}
\end{equation}
and $\text{ssupp}^{\al}(g)$ as the natural projection of $\WF^{\al}(g)$ onto $X$.
\end{definition}

We define the distribution $x_+ = 0 $ for $x < 0$ and $x_+ = x$ when $x\geq 0$. Similarly we define $x_- = (-x)_+$. From \cite{franssens2014multiplication}, we have the theorem which defines the Fourier transform of such distributions.
\begin{theorem}
\label{thm_dist}
Let $z \in \mathbb{C}$. We have
\begin{equation}
\label{equ_dist}
\mathcal{F} (x_\pm^{-(z+1)})(\lambda) = c(z)(\lambda \mp i0)^z,\ z\not=0,1,2,\dots,
\end{equation}
where here $\lambda$ is dual to $x$ and $c(z) = (\pm 2\pi i )^z\Gamma(z)$ where $\Gamma$ is the Gamma Function. Further, the distributions on the right side of \eqref{equ_dist} can be written
$$(\lambda \pm i0)^z = \lambda_+^z + e^{\pm i\pi z } \lambda_-^z = |\lambda|^z[H(\lambda) + e^{\pm i\pi z } H(-\lambda)],$$
where $H$ is the Heaviside function. 
\end{theorem}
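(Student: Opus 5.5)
The plan is to compute the Fourier transform directly in a half-plane of $z$ where $x_\pm^{-(z+1)}$ is locally integrable, and then extend to the stated range of $z$ by analytic continuation. Both sides of \eqref{equ_dist} are analytic families of tempered distributions in $z$, away from the poles of the Gamma factor.

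\emph{Step 1 (the convergent case).} Write $a=-(z+1)$ and assume first $\mathrm{Re}\,a>-1$, i.e.\ $\mathrm{Re}\,z<0$. Then $x_+^{a}$ is locally integrable and tempered, so I regularise with $e^{-\varepsilon x}$, $\varepsilon>0$. With the convention for $\mathcal{F}$ fixed above,
\[
\mathcal{F}\bigl(x_+^{a}e^{-\varepsilon x}\bigr)(\lambda)=\int_0^\infty x^{a}e^{-x(\varepsilon+i\lambda)}\,\dd x=\Gamma(a+1)\,(\varepsilon+i\lambda)^{-(a+1)}.
\]
This follows from the Gamma integral by rotating the contour, or by analytic continuation in $\varepsilon+i\lambda$ from the positive real axis, using the principal branch on $\mathrm{Re}(\cdot)>0$. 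Substituting $a+1=-z$ gives $\Gamma(-z)(\varepsilon+i\lambda)^{z}=\Gamma(-z)\,i^{z}(\lambda-i\varepsilon)^{z}$.

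\emph{Step 2 (the limit $\varepsilon\to0$).} As $\varepsilon\to0^+$, $x_+^ae^{-\varepsilon x}\to x_+^a$ in $\Sc'$ by dominated convergence, and $\mathcal{F}$ is continuous on $\Sc'$. Hence $\mathcal{F}(x_+^{a})=\Gamma(-z)\,i^{z}(\lambda-i0)^{z}$, where $(\lambda-i0)^z:=\lim_{\varepsilon\to0^+}(\lambda-i\varepsilon)^z$ in $\Sc'$. For $x_-^{a}$ I would use the reflection $x\mapsto -x$. This sends $\lambda\mapsto-\lambda$ and produces $(\lambda+i0)^z$ with the conjugate phase $(-i)^z$.

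\emph{Step 3 (continuation and matching constants).} The left side $z\mapsto x_\pm^{-(z+1)}$ extends meromorphically as a family in $\Sc'$; this is the standard Gel'fand--Shilov continuation, obtained by subtracting Taylor polynomials of the test function at $0$. The right side $(\lambda\mp i0)^z$ is entire in $z$. By uniqueness of analytic continuation, the identity then holds wherever the prefactor is finite. The poles at $z=0,1,2,\dots$ come from the Gamma factor, which accounts for the stated exclusion. I expect the main obstacle to be the bookkeeping of the constant $c(z)$ under the paper's Fourier convention and the normalisation used in \cite{franssens2014multiplication}. That normalisation absorbs $2\pi$ factors and rewrites the Gamma factor into the stated form $(\pm2\pi i)^z\Gamma(z)$, possibly using $\Gamma(z)\Gamma(1-z)=\pi/\sin\pi z$. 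Since only the $z$-dependence and the homogeneity matter later, I would follow the convention of \cite{franssens2014multiplication} for this constant rather than rederive it.

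\emph{Step 4 (the decomposition).} For $\lambda>0$, $(\lambda\pm i\varepsilon)^z\to\lambda^z$. For $\lambda<0$, write $\lambda\pm i\varepsilon=|\lambda|e^{i\theta}$ with the principal argument $\theta\to\pm\pi$, so $(\lambda\pm i\varepsilon)^z\to|\lambda|^ze^{\pm i\pi z}$. For $\mathrm{Re}\,z>-1$ these pointwise limits are dominated by a locally integrable function, so they are also distributional limits. This gives $(\lambda\pm i0)^z=\lambda_+^z+e^{\pm i\pi z}\lambda_-^z=|\lambda|^z[H(\lambda)+e^{\pm i\pi z}H(-\lambda)]$, and the identity extends to other $z$ by analytic continuation.
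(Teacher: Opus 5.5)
The paper gives no proof of this statement; it is quoted from \cite{franssens2014multiplication}. Your route is the standard derivation: damp by $e^{-\varepsilon x}$ in the strip $\mathrm{Re}\,z<0$, evaluate the Gamma integral, pass to the limit in $\Sc'$, continue analytically in $z$, and read off the boundary values in Step 4. Those steps are sound. Under the paper's convention $\mathcal{F}f(\xi)=\int e^{-ix\cdot\xi}f(x)\,\dd x$, what they actually prove is
\[
\mathcal{F}\bigl(x_\pm^{-(z+1)}\bigr)(\lambda)=(\pm i)^z\,\Gamma(-z)\,(\lambda\mp i0)^z,\qquad z\neq 0,1,2,\dots .
\]

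The gap is the last move of Step 3. There you assert that this constant can be rewritten as $(\pm 2\pi i)^z\Gamma(z)$ by adjusting Fourier normalisations and perhaps the reflection formula, and you defer to the reference. That cannot be done.
\begin{itemize}
\item Rescaling the dual variable only turns $(\pm i)^z$ into $(\pm 2\pi i)^z$ (convention $e^{-2\pi i x\xi}$), or multiplies by a $z$-independent factor. It never converts $\Gamma(-z)$ into $\Gamma(z)$.
\item The identity $\Gamma(z)\Gamma(1-z)=\pi/\sin\pi z$ would introduce $1/\Gamma$, not $\Gamma(z)$.
\item The pole sets disagree. $\Gamma(z)$ is regular at $z=1,2,\dots$ and singular at $z=0,-1,-2,\dots$, so it is incompatible with the exclusion $z\neq 0,1,2,\dots$. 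Your own remark that these poles ``come from the Gamma factor'' is true only of $\Gamma(-z)$.
\item Check $z=-1/2$ directly. You get $\mathcal{F}(x_+^{-1/2})(\lambda)=\sqrt{\pi}\,e^{-i\pi/4}(\lambda-i0)^{-1/2}$, or $2^{-1/2}e^{-i\pi/4}$ in place of $\sqrt{\pi}\,e^{-i\pi/4}$ with the $2\pi$ convention. The stated constant is $c(-1/2)=(2\pi i)^{-1/2}\Gamma(-1/2)=-\sqrt{2}\,e^{-i\pi/4}$. It has the opposite sign, so no choice of normalisation reproduces it.
\end{itemize}
So the constant in the statement is mistranscribed. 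It should read $c(z)=(\pm i)^z\Gamma(-z)$, or $(\pm 2\pi i)^z\Gamma(-z)$ with the $2\pi$ convention. Your argument should conclude this rather than defer to Franssens.

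The error is harmless downstream. Proposition \ref{prop_sqrt} uses the result with $z$ replaced by $-(1+z)$, $z>0$. There the corrected constant $(\pm i)^{-(1+z)}\Gamma(1+z)$ is finite and nonzero, and only that and the homogeneity $|\lambda|^{-(1+z)}$ are used.

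A smaller point for Step 4: $|\lambda|^z[H(\lambda)+e^{\pm i\pi z}H(-\lambda)]$ is a locally integrable function only for $\mathrm{Re}\,z>-1$. Beyond that range, $\lambda_\pm^z$ must be read as the meromorphically continued distributions. Your continuation is legitimate because their poles at $z=-1,-2,\dots$ cancel in the combination $\lambda_+^z+e^{\pm i\pi z}\lambda_-^z$.
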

Thus, for real $z$, the Fourier transform of $x_\pm^{z}$ decays at rate $z+1$ as $|\lambda| \to \infty$.

\noindent We now review some preliminaries on conormal distributions.

\begin{definition}[\cite{palacios2018quantitative,greenleaf1993recovering}]
\label{def_conormal}
Let $\Phi(\vx,\vsig)$ be a phase function, let $a(\vx,\vsig)$ be a symbol order $\nu$, and let $\Gamma$ be a smooth embedded submanifold of codimension $N$ in $X$. Suppose the normal bundle of $\Gamma$ is parametrized by $\Phi$, which means $N^*\Gamma = \{(\vx, \mathrm{d}_\vx\Phi(\vx,\vsig)) \in T^* X : \vx \in X, \mathrm{d}_\vsig\Phi(\vx,\vsig) = 0\}$. Then, we denote $I^\beta(N^*\Gamma) = I^{\beta + n/4 -N/2}(\Gamma) = I^{\nu}(\Gamma)$ to be the space of conormal distributions order $\beta$ on $N^*\Gamma$. The elements $u \in I^\beta(N^*\Gamma)$  take the form
\begin{equation}
u = \int_{\mathbb{R}^N}a(\vx,\vsig) e^{i \Phi(\vx,\vsig)} \mathrm{d}\vsig,
\end{equation}
where $a \in S^\nu(X, \mathbb{R}^N)$, $\nu = \beta + n/4 -N/2$ and one has $u \in H^{\alpha}(\mathbb{R}^{n_X})$ for $\alpha < - \beta -n/4$.
\end{definition}

\begin{proposition}
\label{prop_sqrt}
Let $s - \phi(\vv) = 0$ be the defining equation of a smooth $(n-1)$-dimensional manifold $\Gamma$, in $\mathbb{R}^n$, where $\phi$ is smooth, $s\in \mathbb{R}$ and $\vv \in \mathbb{R}^{n-1}$. Then, for $z >0$ and any smooth function $u$,
\begin{equation}
\label{dist_order_0}
u(s,\vv)(s - \phi(\vv))^{z}_\pm \in I^{-z-1/2-n/4}(N^*\Gamma).
\end{equation}
\end{proposition}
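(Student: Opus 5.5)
The plan is to write $u(s,\vv)(s-\phi(\vv))^{z}_\pm$ explicitly as an oscillatory integral with a one-dimensional frequency variable ($N=1$). The phase will parametrize $N^*\Gamma$, and I will then read off the symbol order and convert it to the conormal order using Definition \ref{def_conormal}.

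First I use the one-dimensional Fourier inversion formula together with Theorem \ref{thm_dist}. Writing $z = -(w+1)$, that theorem gives $\mathcal{F}(x_\pm^{z})(\lambda) = c(-z-1)(\lambda\mp i0)^{-z-1}$, valid provided $-z-1\notin\{0,1,2,\dots\}$, which holds since $z>0$. Hence, with $x = s-\phi(\vv)$,
\[
(s-\phi(\vv))^z_\pm = \frac{c(-z-1)}{2\pi}\int_{\mathbb{R}} e^{i\sigma(s-\phi(\vv))}(\sigma\mp i0)^{-z-1}\,\dd\sigma .
\]
This is understood as an oscillatory integral. Multiplying by $u(s,\vv)$ gives an expression of the form $\int e^{i\Phi}a\,\dd\sigma$ with phase $\Phi(s,\vv,\sigma)=\sigma(s-\phi(\vv))$ and amplitude $a = \frac{c(-z-1)}{2\pi}u(s,\vv)(\sigma\mp i0)^{-z-1}$.

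Next I check the ingredients required by Definition \ref{def_conormal}.
\begin{enumerate}
\item The phase $\Phi$ is homogeneous of degree one in $\sigma$.
\item Its differential $\dd\Phi = (\sigma, -\sigma\nabla\phi, s-\phi)$ is nonzero for $\sigma\neq 0$.
\item The critical set $\{\dd_\sigma\Phi = 0\}$ is exactly $\Gamma$, and the corresponding covectors $\dd_{(s,\vv)}\Phi = \sigma(1,-\nabla\phi(\vv))$ are precisely the normals to $\Gamma$. So $\Phi$ parametrizes $N^*\Gamma$, with codimension $N=1$.
\end{enumerate}

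The main obstacle is that the amplitude is not a genuine symbol in $S^{-z-1}$: $(\sigma\mp i0)^{-z-1}$ is homogeneous of degree $-z-1$, but it is singular at $\sigma=0$. To handle this I insert a cutoff $\chi(\sigma)$ that equals $1$ near $0$ and split the integral into two pieces.
\begin{enumerate}
\item The piece with $(1-\chi)$ has amplitude $u(s,\vv)(1-\chi(\sigma))|\sigma|^{-z-1}[H(\sigma)+e^{\pm i\pi(-z-1)}H(-\sigma)]$. This is smooth, and its $\sigma$-derivatives satisfy the bounds of Definition \ref{ellip} with order $m=-z-1$, because it is homogeneous of that degree for large $|\sigma|$.
\item The piece with $\chi$ has compactly supported frequency content. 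It is therefore the inverse Fourier transform in $\sigma$ of a compactly supported distribution, hence smooth in $x$, and composing with the smooth map $(s,\vv)\mapsto s-\phi(\vv)$ keeps it smooth.
\end{enumerate}
A smooth function lies in every $I^\beta$, so it does not affect membership. If $u$ is not compactly supported I restrict to compact sets, as in the local definition; I expect this to be harmless because the symbol estimates are only required on compact sets.

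Finally I convert the order. Definition \ref{def_conormal} gives $\nu = \beta + n/4 - N/2$, so with $\nu = -z-1$ and $N=1$ we get $\beta = -z-1+1/2-n/4 = -z-1/2-n/4$. This is the claimed order in \eqref{dist_order_0}.

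As a consistency check, the Sobolev statement in the definition then gives $H^\alpha$ regularity for $\alpha< z+1/2$. This matches the one-dimensional fact that $x_+^z\in H^{\alpha}_{\rm loc}$ for $\alpha<z+1/2$.
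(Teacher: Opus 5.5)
Your proposal is correct and follows essentially the same route as the paper. Both arguments represent $(s-\phi(\vv))^z_\pm$ via Theorem \ref{thm_dist} as a one-frequency oscillatory integral with phase $\lambda(s-\phi(\vv))$, cut off near $\lambda=0$ so that the low-frequency piece is smooth by Paley--Wiener and the rest is a symbol of order $-(1+z)$, and convert with $N=1$ to get $\beta=-z-1/2-n/4$. Your explicit checks that the phase parametrizes $N^*\Gamma$, and the Sobolev consistency check $\alpha<z+1/2$, are sound additions that the paper leaves implicit.
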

\begin{proof}
By \eqref{equ_dist}, we have
\begin{equation}
u(s,\vv)(s - \phi(\vv))^{z}_\pm = \frac{2\pi u(s,\vv)}{c(z)}\int_{\mathbb{R}}(\lambda \mp i0)^{-(1+z)} e^{i \lambda (s- \phi(\vv))} \mathrm{d}\lambda,
\end{equation}
where $c(z)$ is as in \eqref{equ_dist}. Now, let $\varphi(\lambda)$ be a smooth cutoff centered on zero which is equal to 1 in a small neighborhood of zero. Then, 
\begin{equation}
\begin{split}
u(s,v)(s - \phi(\vv))^{z}_\pm &= \frac{2\pi u(s,\vv)}{c(z)}\Bigg [ \int_{\mathbb{R}}\varphi(\lambda)(\lambda \mp i0)^{-(1+z)} e^{i \lambda (s- \phi(\vv))} \mathrm{d}\lambda\\
& \hskip1cm + \int_{\mathbb{R}}(1- \varphi(\lambda)) (\lambda \mp i0)^{-(1+z)} e^{i \lambda (s- \phi(\vv))} \mathrm{d}\lambda \Bigg]\\
&= \frac{2\pi u(s,\vv)}{c(z)} \nu(s,\vv) + \int_{\mathbb{R}}a(s,\vv,\lambda) e^{i \lambda (s- \phi(\vv))} \mathrm{d}\lambda.
\end{split}
\end{equation}
Here, $\nu$ is smooth by the Paley-Weiner theorem \cite[Theorem 7.3.1]{hormanderI}, and $a$ is a symbol order $\mu = -(1+z)$, noting that we have localized away from the singularity in $(\lambda \mp i0)$ at zero using the smooth cutoff. We now calculate the order of the distribution ($\beta$) using $\mu = \beta + n/4 - N/2$, where $N = 1$ is the dimension of the variable $\lambda$. This yields $\beta = -z-1/2-n/4$.
\end{proof}

The normal bundle $N^* \Gamma$, as used in Definition \ref{def_conormal}, is an example of a Lagrangian manifold \cite[Chapter XXI]{hormanderIII}. We will use the so-called $I^{p,l}$ classes of distributions associated to a pair of cleanly intersecting Lagrangians. The next definition applies to general Lagrangian manifolds, but we will only require the case of cleanly intersecting normal bundles.

\begin{definition}[\cite{greenleaf1993recovering}]
\label{pair_lag}
Let $\Lambda_1,\Lambda_2 \subset T^*X\backslash 0$ be a pair of cleanly intersecting Lagrangians. Then, associated to the pair $(\Lambda_1, \Lambda_2)$ is a class of Lagrangian distributions $u\in I^{p,l}(\Lambda_1, \Lambda_2)$, indexed by $p,l\in \mathbb{R}$, which satisfy $\WF(u) \subset \Lambda_1 \cup \Lambda_2$. Microlocally, away from $\Lambda_1 \cap \Lambda_2$,
$$I^{p,l}(\Lambda_1,\Lambda_2) \subset I^{p+l}(\Lambda_1 \backslash \Lambda_2),\ \ \text{and}\ \ I^{p,l}(\Lambda_1,\Lambda_2) \subset I^p(\Lambda_2).$$
Let $Y_2 \subset Y_1 \subset X$ be smooth manifolds with $\text{codim}_X(Y_1) = d_1$ and $\text{codim}_X(Y_2) = d_1 + d_2$. Then $N^*Y_1$ and $N^*Y_2$ intersect cleanly in codimension $d_2$. The space of distributions on $X$ conormal to the pair $(Y_1,Y_2)$ of orders $\nu, \nu'$ is
\begin{equation}\label{pair_lag_e}
\begin{split}
I^{\nu,\nu'}(Y_1,Y_2) &= I^{\nu + \nu' + \frac{d_1 +d_2}{2}-\frac{n}{4},-\frac{d_2}{2}-\nu'}(N^*Y_1, N^* Y_2)\\
&= I^{\nu + \frac{d_1}{2}-\frac{n}{4},\nu'+\frac{d_2}{2}}(N^*Y_2, N^* Y_1)
\end{split}
\end{equation}
\end{definition}

Formulae for distributions in $I^{p,l}(\Lambda_1,\Lambda_2)$ are given in \cite{greenleaf1993recovering}, but their explicit form is not important for our purposes. We now have the lemma from \cite[Lemma 1.1]{greenleaf1993recovering}.
\begin{lemma}
\label{lem_green}
Let $Y, Z \subset X$ be submanifolds, where $Y$ and $Z$ intersect transversally. Then, 
$$I^\beta(Y)\cdot I^{\beta'}(Z) \subset I^{\beta,\beta'}(Y,Y \cap Z)  + I^{\beta',\beta}(Z,Y \cap Z).$$
\end{lemma}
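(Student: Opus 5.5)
The plan is to reduce to a model computation in local coordinates adapted to the transversal pair $(Y,Z)$. We then split the product into two frequency regions: one that is conormal to the pair $(Y,Y\cap Z)$ and one that is conormal to $(Z,Y\cap Z)$. The classes $I^{\beta}(\cdot)$ and $I^{\nu,\nu'}(\cdot,\cdot)$ are invariant under diffeomorphisms and are defined microlocally, so it suffices to work near a point $\vx_0\in Y\cap Z$. Away from $Y\cap Z$ one of the factors is smooth, and the product is then simply conormal to $Y$ or to $Z$, which is contained in the corresponding paired class. By transversality, near $\vx_0$ we can choose coordinates $\vx=(x',x'',x''')$ with $Y=\{x'=0\}$, $Z=\{x''=0\}$, $x'\in\rr^{d}$ and $x''\in\rr^{e}$, where $d=\operatorname{codim} Y$ and $e=\operatorname{codim} Z$. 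Then $Y\cap Z=\{x'=x''=0\}$ has codimension $d+e$.

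In these coordinates, Definition \ref{def_conormal} lets us write
\[
u=\int_{\rr^{d}} a(\vx,\xi')e^{i x'\cdot\xi'}\,\dd\xi',\qquad v=\int_{\rr^{e}} b(\vx,\eta'')e^{i x''\cdot\eta''}\,\dd\eta'',
\]
with $a\in S^{\nu}$ and $b\in S^{\nu'}$, where $\nu,\nu'$ are the symbol orders corresponding to $\beta,\beta'$. Hence
\[
uv=\int_{\rr^{d+e}} c(\vx,\xi',\eta'')\,e^{i(x'\cdot\xi'+x''\cdot\eta'')}\,\dd\xi'\,\dd\eta'',\qquad c=ab .
\]
The amplitude $c$ is of product type. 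For all multi-indices $\al,\gamma$ it satisfies
\[
\abs{\partial^{\al}_{\xi'}\partial^{\gamma}_{\eta''}c}\leq C(1+|\xi'|)^{\nu-|\al|}(1+|\eta''|)^{\nu'-|\gamma|},
\]
together with the analogous bounds for $\vx$-derivatives.

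Next we insert a partition of unity $1=\chi_1+\chi_2$ in $(\xi',\eta'')$. Here $\chi_1$ is smooth and homogeneous of degree zero outside a compact set, equal to $1$ on $\{|\eta''|\le |\xi'|\}$ and supported in $\{|\eta''|\le 2|\xi'|\}$; compactly supported errors only produce smooth terms. On $\operatorname{supp}\chi_1$ we have $1+|\xi'|\simeq 1+|(\xi',\eta'')|$. Therefore $\chi_1 c$ satisfies the bounds
\[
\abs{\partial^{\al}_{\xi'}\partial^{\gamma}_{\eta''}(\chi_1 c)}\leq C(1+|(\xi',\eta'')|)^{\nu-|\al|}(1+|\eta''|)^{\nu'-|\gamma|}.
\]
This is precisely the product-type symbol class with phase $x'\cdot\xi'+x''\cdot\eta''$ that defines $I^{\nu,\nu'}(Y,Y\cap Z)$ in the Melrose--Uhlmann/Greenleaf--Uhlmann sense, with $Y_1=Y$ and $Y_2=Y\cap Z$. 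The roles of the variables are: the total frequency $(\xi',\eta'')$ is conormal to $Y\cap Z$, the variable $\eta''$ measures the departure from $N^*Y$, and we have $d_1=d$, $d_2=e$. Symmetrically, $\chi_2 c$ gives an element of $I^{\nu',\nu}(Z,Y\cap Z)$. Translating $\nu,\nu'$ back into $\beta,\beta'$ through \eqref{pair_lag_e} gives the stated inclusion.

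The main obstacle is the symbol estimate for $\chi_1 c$. When derivatives fall on the cutoff $\chi_1$, each $\eta''$-derivative gains only $(1+|\xi'|)^{-1}$ rather than $(1+|\eta''|)^{-1}$. On $\operatorname{supp}\nabla\chi_1$, however, $|\eta''|\simeq|\xi'|$, so these two gains are comparable and the estimate still holds. Likewise, $\xi'$-derivatives of $a$ gain $(1+|\xi'|)^{-1}$, which on $\operatorname{supp}\chi_1$ is comparable to $(1+|(\xi',\eta'')|)^{-1}$. A secondary point of care is bookkeeping between the order conventions, namely the $\beta$ labels of Definition \ref{def_conormal} versus the symbol orders used in \eqref{pair_lag_e}, so that the indices come out exactly as in the statement.
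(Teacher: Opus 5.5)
Your proof is correct and is essentially the standard argument behind \cite[Lemma 1.1]{greenleaf1993recovering}, which the paper quotes without proof: it uses adapted coordinates $Y=\{x'=0\}$, $Z=\{x''=0\}$, the product amplitude $a(\vx,\xi')b(\vx,\eta'')$, and a conic partition of unity that separates $|\eta''|\lesssim|\xi'|$ (giving the product-type class for the pair $(Y,Y\cap Z)$) from $|\xi'|\lesssim|\eta''|$. One simplification: in the paper's convention a submanifold argument already records the amplitude order (Definition \ref{def_conormal}), so your $\nu,\nu'$ are just $\beta,\beta'$, and no conversion through \eqref{pair_lag_e} is needed; that formula only enters afterwards, in the proof of Theorem \ref{double}.
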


\section{A non-linear cone transform}
\label{main}
In this section, we introduce our transform and present our first main microlocal theorems. We then go on to use this theory to reconstruct  a gamma ray source intensity from Compton CT data. 

Let us define the cone
$$C(\vv, \omega, h) = \{ \vx \in \mathbb{R}^n : (\vx - \vv) \cdot \omega - h|\vx - \vv| = 0\},$$
where $\omega \in S^{n-1}$ is the orientation of the central axis and $h \in (-1,1)$ determines the cone opening angle, $2\alpha$ ($\cos\alpha = h$). Here, $\vv\in \s$ is the cone vertex, where $\s$ is an $(n-1)$-dimensional manifold in $\mathbb{R}^n$. For example, $\s$ could be a sphere or a plane. When $n = 2$, $C$ defines a V-line or broken-ray \cite{AmbartsoumianLatifi-Vline2019}.

\begin{figure}[!h]
\centering
\begin{tikzpicture}[scale=0.8]
\draw[fill=green,rounded corners=1mm] (0,0) \irregularcircle{0.8cm}{0.5mm};
\draw[fill=cyan,rounded corners=1mm] (-2.5,0) \irregularcircle{0.8cm}{0.5mm};
\draw[fill=cyan,rounded corners=1mm] (-0.5,-2.5) \irregularcircle{0.8cm}{0.5mm};
\draw[fill=cyan,rounded corners=1mm] (1.5,2.5) \irregularcircle{0.8cm}{0.5mm};
\node at (-4.1,1.5) {$\s$};
\node at (-0.3,-0.3) {$f$};
\node at (-2.5,0) {$\mu$};
\node at (-0.5,-2.5) {$\mu$};
\node at (1.5,2.5) {$\mu$};
\draw [->,line width=1pt] (-6,0)--(6,0)node[right] {$x_1$};
\draw [->,line width=1pt] (0,-6)--(0,6)node[right] {$x_2$};
\draw [thick] (0,0) circle (4);
\draw  (1,5)node[right]{$C(\vv,\omega,h)$}--(2.83,-2.83)node[right]{$\vv$};
\draw  (-4.5,0)--(2.83,-2.83);
\draw [dashed,->]  (-1.75,2.5)node[left]{$\omega$}--(2.83,-2.83);
\coordinate (D) at (-1.75,2.5);
\coordinate (w) at (2.83,-2.83);
\coordinate (a) at (1,5);
\draw pic[draw=orange, <->,"$\alpha$", angle eccentricity=2] {angle = a--w--D};
\end{tikzpicture}
\caption{Example ECST geometry when $n=2$, $\s = S^{1}$ and $B = \{|\vx| < 1\}$. Here, the cyan domains represent where $\mu$ is supported, and the green domain is $\text{supp}(f) = \Omega$. In this example, the practical goal would be to identify the radioactive material $f$ surrounded by clutter $\mu$.}
\label{fig1}
\end{figure}

Let $B \subset \mathbb{R}^n$ be an open, simply connected domain with $B\cap \s = \emptyset$ which represents the region of interest (ROI), e.g., $B$ could be an open ball disjoint from $\s$. Let $f = I_0 \chi_\Omega$ represent a gamma ray source, where $I_0$ is the (constant) intensity and $\Omega \subset B$ is a compact domain in $B$ with smooth boundary, $\partial \Omega$. Further, let
\begin{equation}\label{eq:muform}
\mu = \sum_{k = 0}^m \Phi_k \chi_{\Omega_k}
\end{equation}
be an attenuation coefficient, where the $\Phi_k > 0$ are smooth functions and the $\Omega_k \subset B$ are compact domains with smooth boundaries such that $\Omega_k \cap \Omega = \Omega_k \cap \Omega_j = \emptyset$ for every $k$ and $j \neq k$. That is, the supports of $f$ and $\mu$ are assumed to be disjoint. 

Then, we consider the transform
\begin{equation}\label{orig_data}
\mathcal{R}(f, \mu)(\vv, \omega, h) = \int_{\vx\in C(\vv, \omega, h)} f(\vx) e^{ -G\mu(\vx, \vv) } \mathrm{d}S,
\end{equation}
where $\mathrm{d}S$ is the surface measure on $C$, and 
\begin{equation}
\begin{split}
G\mu(\vx, \vv) &= |\vx - \vv| \int_0^1\mu(t\vx + (1-t)\vv)\mathrm{d}t \\
\end{split}
\end{equation}
defines the integral of $\mu$ over the line segment from $\vv$ to $\vx$. Similarly, we define the linear analog $Rf(\vv, \omega, h) = \mathcal{R}(f, 0)(\vv, \omega, h)$. If we fix $\vv$, then we can write $\mathcal{R}(f, \mu) = R(fg)$, where $fg = f e^{ -G\mu(\cdot, \vv) }$. The operator, $\mathcal{R}$, models the Compton scatter intensity in emission Compton scatter tomography (ECST) applications, where a  Compton camera is used to detect the presence of a radioactive source, e.g., in nuclear decommissioning \cite{yao2022rapid}. An example geometry is shown in figure \ref{fig1}. $\mathcal{R}$ is linear in $f$ and non-linear in $\mu$ due to the exponential term, which models the attenuation of gamma rays traveling towards the Compton camera detector. Our main goal is to recover $f$ from $\mathcal{R}(f,\mu)$. To do this, we will show that the singularities of $f$ are uniquely determined. Later, in section \ref{sec:sim}, we will prove that $\mu$ is also determined, under reasonable assumptions on $\text{supp}(\mu)$.

First, we have the following result which shows how one can transform the cone integral data into ray (or line) integrals.

\begin{theorem}
\label{thm_1}
Let
\[
Rf_{lm}(\vv,h) = \int_{S^{n-1}}Rf(\vv, \omega, h) Y_{lm}(\omega) \d \omega \mbox{ and } f_{lm}(r) = \int_{S^{n-1}} f(r\omega)  Y_{lm}(\omega) \d \omega,
\]
where $Y_{lm}$ is a spherical harmonic. Then,
\begin{equation}
Rf_{lm}(\vv, h) = c_n(1 - h^2)^{(n-2)/2} C^{\lambda}_l(h) \int_0^\infty \rho^{n-2} \tilde{f}_{lm}(\rho) \mathrm{d}\rho,
\end{equation}
where $\tilde{f}(\vx) = f(\vv +\rho\vx)$, the $C^{\lambda}_l(h)$ are Gegenbaur polynomials \cite{Q1983-rotation} with degree $l$, $\lambda = (n-2)/2$, and $c_n = w_n/C^{\lambda}_l(1)$, where $w_n$ is the surface area of $S^{n-1}$.
\end{theorem}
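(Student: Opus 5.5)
The plan is to pass to polar coordinates centred at the vertex $\vv$ and then diagonalise the remaining spherical-section operator on $S^{n-1}$ with the Funk--Hecke theorem. Write $\vx = \vv + \rho\theta$ with $\rho>0$ and $\theta\in S^{n-1}$. Since $|\vx-\vv|=\rho$, the defining equation of $C(\vv,\omega,h)$ becomes $\theta\cdot\omega = h$, independently of $\rho$. So the cone is the union over $\rho>0$ of $\rho$ times the $(n-2)$-sphere $\Sigma_{\omega,h}=\{\theta\in S^{n-1}:\theta\cdot\omega=h\}$, whose radius is $\sqrt{1-h^2}$. The surface measure then factors as $\mathrm{d}S = \rho^{n-2}\,\mathrm{d}\rho\,\mathrm{d}\sigma_{\omega,h}(\theta)$, with $\mathrm{d}\sigma_{\omega,h}$ the induced $(n-2)$-dimensional measure on $\Sigma_{\omega,h}$. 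I will check this Jacobian directly: the radial direction $\theta$ is orthogonal to the tangent space of the sphere of radius $\rho$, and scaling $\Sigma_{\omega,h}$ by $\rho$ contributes $\rho^{n-2}$. This gives
\[
Rf(\vv,\omega,h)=\int_0^\infty \rho^{n-2}\int_{\Sigma_{\omega,h}} F_\rho(\theta)\,\mathrm{d}\sigma_{\omega,h}(\theta)\,\mathrm{d}\rho,\qquad F_\rho(\theta)=f(\vv+\rho\theta),
\]
where $F_\rho$ is the function $\tilde f$ of the statement restricted to the sphere of radius $\rho$, so that $\tilde f_{lm}(\rho)=\int_{S^{n-1}}F_\rho(\theta)Y_{lm}(\theta)\,\mathrm{d}\theta$.

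Next I rewrite the inner integral as $\int_{S^{n-1}}F_\rho(\theta)\,\delta(\theta\cdot\omega-h)\,\mathrm{d}\theta$ multiplied by $\sqrt{1-h^2}$. This uses the coarea formula: on the sphere the gradient of $\theta\mapsto\theta\cdot\omega$ has length $\sqrt{1-(\theta\cdot\omega)^2}$, so $\delta(\theta\cdot\omega-h)\,\mathrm{d}\theta = (1-h^2)^{-1/2}\,\mathrm{d}\sigma_{\omega,h}$. The operator
\[
M_h F(\omega)=\int_{S^{n-1}} F(\theta)\,\delta(\theta\cdot\omega-h)\,\mathrm{d}\theta
\]
has a kernel depending only on $\theta\cdot\omega$. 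Funk--Hecke therefore applies, and $M_h Y_{lm}=\lambda_l(h)Y_{lm}$ with
\[
\lambda_l(h)=w_{n-1}\int_{-1}^1\delta(t-h)\frac{C^\lambda_l(t)}{C^\lambda_l(1)}(1-t^2)^{(n-3)/2}\,\mathrm{d}t = w_{n-1}(1-h^2)^{(n-3)/2}\frac{C^\lambda_l(h)}{C^\lambda_l(1)},
\]
where $\lambda=(n-2)/2$. Since $M_h$ is self-adjoint on $L^2(S^{n-1})$, because its kernel is symmetric in $(\theta,\omega)$, integrating against $Y_{lm}(\omega)$ gives $\int (M_hF_\rho)\,Y_{lm}=\lambda_l(h)\,\tilde f_{lm}(\rho)$.

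Finally I interchange the $\omega$-integral with the $\rho$-integral. This is justified by Fubini, since $f$ is compactly supported in $B$ and $B$ is bounded away from $\s$. Multiplying $\lambda_l(h)$ by the factor $\sqrt{1-h^2}$ turns $(1-h^2)^{(n-3)/2}$ into $(1-h^2)^{(n-2)/2}$, which produces the claimed formula, with $c_n$ equal to a sphere area divided by $C^\lambda_l(1)$.

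I expect the main obstacle to be the bookkeeping of normalisations: the Jacobian of the cone measure, the coarea factor, and the Funk--Hecke constant. The Funk--Hecke constant is naturally $w_{n-1}$, the area of $S^{n-2}$, rather than $w_n$. I will track this carefully and absorb it into $c_n$, noting the dependence on the convention for $w_n$. A smaller issue is making sense of $\delta(\theta\cdot\omega-h)$ for $h\in(-1,1)$. I will handle it by approximating the delta with smooth mollifiers and passing to the limit, which is harmless because $\Sigma_{\omega,h}$ is a smooth codimension-one submanifold of $S^{n-1}$ when $|h|<1$.
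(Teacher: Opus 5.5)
Your proposal is correct and follows essentially the same route as the paper: polar coordinates about the vertex, the factor $\sqrt{1-h^2}$ converting the cone measure into $\rho^{n-2}\,\delta(\theta\cdot\omega-h)\,\mathrm{d}\rho\,\mathrm{d}\theta$, and then Funk--Hecke. The only procedural difference is that the paper applies $R$ termwise to $f_{lm}Y_{lm}$, whereas you use the symmetry of the kernel of $M_h$, which amounts to the same thing. Your suspicion about the constant is also right: Funk--Hecke produces the area of $S^{n-2}$, not the area of $S^{n-1}$ that the paper gives for $w_n$, so $c_n$ should be $|S^{n-2}|/C^{\lambda}_l(1)$.
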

\begin{proof}
Without loss of generality, let $\vv = 0$. This does not sacrifice generality as we can simply recenter at $\vv$ after the result is proven for $\vv = 0$. 

Let $Rf(\omega, h) = Rf(0,\omega,h)$. Then, we have
\begin{equation}
\begin{split}
Rf(\omega, h)  &= \int_{\mathbb{R}^n} |\nabla_\vx\Phi| \delta( \vx\cdot \omega - h|\vx| )f(\vx) \mathrm{d}\vx\\
&= \sqrt{1 - h^2} \int_{\xi \in S^{n-1}}\int_0^\infty \rho^{n-1} \delta( \rho(\xi\cdot \omega) - \rho h)f(\rho \xi) \d \rho \d \xi\\
&= \sqrt{1 - h^2} \int_{\xi \in S^{n-1}} \int_0^\infty\rho^{n-2}  f(\rho \xi) \delta(h - \xi \cdot \omega) \mathrm{d}\rho \mathrm{d}\xi,
\end{split}
\end{equation}
where $\Phi(\vx,\omega,h) = \vx\cdot \omega - h|\vx|$ and we have used the fact $|\nabla_\vx \Phi| = \sqrt{1 - h^2}$ when restricted to the set where $\Phi = 0$. 
Thus,
\begin{equation}
\begin{split}
R(f_{lm}Y_{lm})(\omega, h)  &= \sqrt{1 - h^2}\left[\int_0^\infty\rho^{n-2}  f_{lm}(\rho) \mathrm{d}\rho \right] \int_{\xi \in S^{n-1}} \delta(h - \xi \cdot \omega) Y_{lm}(\xi) \mathrm{d}\xi \\
&= c_n\sqrt{1 - h^2}Y_{lm}(\omega)\left[\int_0^\infty\rho^{n-2}  f_{lm}(\rho) \mathrm{d}\rho \right] \int_{-1}^1 \delta(h - t) C^{\lambda}_l(t) (1 - t^2)^{\lambda - 1/2}\mathrm{d}t \\
& = c_n(1 - h^2)^{(n-2)/2} Y_{lm}(\omega) C^{\lambda}_l(h)\left[\int_0^\infty\rho^{n-2}  f_{lm}(\rho) \mathrm{d}\rho \right],
\end{split}
\end{equation}
where the second step follows from the Funk-Hecke theorem. The result follows.
\end{proof}

Using Theorem \ref{thm_1}, we can relate the cone transform $R$ to the weighted ray transform
\begin{equation} \label{eq:WeightedRayDef}
D_kf(\vv, \omega) = \int_0^\infty \rho^{k}  f(\vv + \rho \omega) \mathrm{d}\rho.
\end{equation}
This relationship is given in the next corollary.

\begin{corollary}\label{corr_1}
Let $h_0 \in (-1,1)$ be transcendental. Then,
\begin{equation}
\label{exp_1}
D_{n-2}f(\vv, \omega) = \frac{(1-h_0^2)^{(2-n)/2}}{c_n} \sum_{lm} \frac{Rf_{lm}(\vv, h_0)}{C^{\lambda}_l(h_0)} Y_{lm}(\omega).
\end{equation}
\end{corollary}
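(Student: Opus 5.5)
The plan is to read Theorem \ref{thm_1} as a statement about spherical harmonic coefficients and invert it coefficient by coefficient. Fix $\vv$ and set $\tilde f(\vx) = f(\vv+\vx)$. Then the weighted ray transform \eqref{eq:WeightedRayDef}, as a function of $\omega \in S^{n-1}$, is
\[
D_{n-2}f(\vv,\omega) = \int_0^\infty \rho^{n-2}\tilde f(\rho\omega)\,\mathrm{d}\rho .
\]
Its spherical harmonic coefficients are
\[
\int_{S^{n-1}} D_{n-2}f(\vv,\omega) Y_{lm}(\omega)\,\mathrm{d}\omega = \int_0^\infty \rho^{n-2}\tilde f_{lm}(\rho)\,\mathrm{d}\rho .
\]
This interchange of integrals is justified by Fubini, since $f$ is bounded with compact support in $B$ and $\vv \in \s$ lies outside $B$. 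Hence the bracketed integral in Theorem \ref{thm_1} is exactly the $(l,m)$ coefficient of $D_{n-2}f(\vv,\cdot)$.

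Next I use the hypothesis on $h_0$ to divide. The Gegenbauer polynomial $C^\lambda_l$ has rational coefficients when $\lambda=(n-2)/2$ is rational, and it is not identically zero. Its roots are therefore algebraic numbers, so $C^\lambda_l(h_0)\neq 0$ for every $l$ because $h_0$ is transcendental. Also $1-h_0^2\neq0$ since $h_0\in(-1,1)$. Theorem \ref{thm_1} at $h=h_0$ then gives, for every $l,m$,
\[
\int_0^\infty \rho^{n-2}\tilde f_{lm}(\rho)\,\mathrm{d}\rho = \frac{(1-h_0^2)^{(2-n)/2}}{c_n}\,\frac{Rf_{lm}(\vv,h_0)}{C^\lambda_l(h_0)} .
\]
One bookkeeping point: $c_n$ depends on $l$ through $C^\lambda_l(1)$, so it has to stay inside the sum. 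The result is still of the stated form.

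Finally, I sum the spherical harmonic expansion $D_{n-2}f(\vv,\cdot)=\sum_{lm}\big(\int D_{n-2}f\,Y_{lm}\big)Y_{lm}$, taking the $Y_{lm}$ to be a real orthonormal basis of $L^2(S^{n-1})$, which is the convention implicit in Theorem \ref{thm_1}. This yields \eqref{exp_1}.

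The main obstacle is in what sense the series converges. I will state it as convergence in $L^2(S^{n-1})$, which holds because $D_{n-2}f(\vv,\cdot)$ is bounded: $f$ is bounded and the $\rho$-integration runs over a bounded range. Pointwise convergence would require more smoothness than $f=I_0\chi_\Omega$ provides, so the identity should be read in $L^2$ or in the distributional sense on the sphere.

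The other delicate step is the nonvanishing of $C^\lambda_l(h_0)$. This rests on the rationality of the coefficients, which holds because the standard recurrence for $C^\lambda_l$ has coefficients rational in $\lambda$. That recurrence argument is what I would cite.
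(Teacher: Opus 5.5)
Your proposal is correct and follows the same route as the paper: you identify the radial integral in Theorem~\ref{thm_1} as the $(l,m)$ spherical harmonic coefficient of $D_{n-2}f(\vv,\cdot)$, divide by $C^\lambda_l(h_0)\neq 0$ (nonzero because $C^\lambda_l$ has rational coefficients and $h_0$ is transcendental), and resum. Your remarks on Fubini, $L^2$ convergence and the $l$-dependence of $c_n$ add care the paper omits, and the one caveat is shared with the paper: for $n=2$ one has $\lambda=0$ and the literal $C^0_l$ vanishes identically for $l\geq 1$, so your ``not identically zero'' claim must be read for the normalized polynomial $C^\lambda_l(h)/C^\lambda_l(1)$, which is the combination $c_n C^\lambda_l(h)/w_n$ actually appearing and which becomes the Chebyshev polynomial $T_l(h)$, still nonzero with rational coefficients.
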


\begin{proof}
When $\lambda$ is rational, the Gegenbaur polynomials are rational polynomials, and hence $C^{\lambda}_l(h_0) \neq 0$ for any $\lambda = (n-2)/2$ and $l$. Thus,
\begin{equation}
\begin{split}
D_{n-2}f(\vv, \omega) &= \sum_{lm}\int_0^\infty \rho^{n-2} f_{lm}(\rho) Y_{lm}(\omega) \mathrm{d}\rho \\
&= \frac{(1-h_0^2)^{(2-n)/2}}{c_n} \sum_{lm} \frac{Rf_{lm}(\vv, h_0)}{C^{\lambda}_l(h_0)} Y_{lm}(\omega),
\end{split}
\end{equation}
which follows from Theorem \ref{thm_1}.
\end{proof}
\begin{remark}
Corollary \ref{corr_1} has similarities to results from the literature \cite{terzioglu2020exact,moon2025inversion,kuchment2017inversion}, where cone integral data is converted to weighted divergent beam integrals. In those works, the authors use all $h \in (-1,1)$ to recover the divergent beam transform. We need only a single transcendental $h_0 \in (-1,1)$, which is an important advantage of Corollary \ref{corr_1}. However, our result relies on infinite series, whereas the results of the literature are closed form and take integrals over $h$. Thus, our method is likely less stable than previous methods as it uses less data.
\end{remark}

When $n = 2$ and $\s$ has certain properties, the next theorem shows how we can determine $D_{0}f$ from $Rf$ via a simple, stable algorithm. 

\begin{theorem}\label{simple_algo}
Let $n=2$ and $\s$ be a convex curve where $B$ is such that no line tangent to $\s$ intersects $B$. Then, for any fixed $\alpha \in (0,\pi/2)$,
\begin{equation}
D_{0}f(\vv,\omega) = \sum_{i=1}^{k(\omega, \alpha)} (-1)^{i-1} Rf(\vv,\omega + (2i-1)\alpha, \cos\alpha),
\end{equation}
where $k \geq 1$ is an integer and we are treating $\omega \in [0,2\pi]$ as an angle.
\end{theorem}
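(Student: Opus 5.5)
In two dimensions the cone $C(\vv,\omega,h)$ with $h=\cos\alpha$ is a V-line. It has vertex $\vv$ and consists of two rays leaving $\vv$ in directions $\omega-\alpha$ and $\omega+\alpha$, with angles taken mod $2\pi$. Since $\mathrm{d}S$ is arc length on each ray, I will first write
\begin{equation*}
Rf(\vv,\omega,\cos\alpha) = D_0f(\vv,\omega-\alpha) + D_0f(\vv,\omega+\alpha),
\end{equation*}
where $D_0$ is the unweighted divergent beam transform from \eqref{eq:WeightedRayDef} with $k=0$, and angles are identified with unit vectors. Evaluating this identity at the shifted angles $\omega+(2i-1)\alpha$ gives
\begin{equation*}
Rf(\vv,\omega+(2i-1)\alpha,\cos\alpha) = D_0f(\vv,\omega+(2i-2)\alpha) + D_0f(\vv,\omega+2i\alpha).
\end{equation*}
The alternating sum over $i=1,\dots,k$ then telescopes to
\begin{equation*}
D_0f(\vv,\omega) + (-1)^{k-1}D_0f(\vv,\omega+2k\alpha).
\end{equation*}
So the whole proof reduces to showing that the last term vanishes for a suitable integer $k=k(\omega,\alpha)\ge 1$.

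This is where the geometric hypothesis enters. Since $f$ is supported in $\Omega\subset B$, we have $D_0f(\vv,\theta)=0$ whenever the ray from $\vv$ in direction $\theta$ misses $B$. Because $\s$ is convex and no tangent line to $\s$ meets $B$, $B$ lies on the inner side of every tangent line of $\s$. In particular, at $\vv$ the tangent line $T_\vv\s$ separates the plane into two half-planes, and $B$ lies in the interior one. Every ray from $\vv$ pointing into the closed exterior half-plane, which is an arc of directions of length $\pi$, therefore misses $B$ and gives $D_0f=0$.

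Next I choose $k$. The angles $\omega+2i\alpha$ for $i=1,2,\dots$ advance in steps of $2\alpha<\pi$. An arithmetic progression on the circle with step strictly smaller than $\pi$ cannot jump over a closed arc of length $\pi$. Hence it must land in the "outward" arc of length $\pi$ within finitely many steps, at most $\lceil 2\pi/(2\alpha)\rceil$ of them. I take $k$ to be the first index $i\ge1$ with $\omega+2i\alpha$ in that arc. Then $D_0f(\vv,\omega+2k\alpha)=0$ and the formula follows. If $\omega$ itself points outward, the formula is still consistent, since both sides vanish.

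The main obstacle is stating the separation claim carefully. I need to check that convexity of $\s$ together with the tangent-line condition really places all of $B$ strictly on one side of $T_\vv\s$ for every $\vv\in\s$, and not only off the tangent line. I would argue this from the connectedness of $B$: $B$ is disjoint from $T_\vv\s$ and connected, so it lies in one open half-plane. Choosing the inward side consistently as $\vv$ varies uses convexity, because $B$ lies inside the region bounded by $\s$ or at least on the same side as the interior. Boundary directions, namely rays along the tangent line itself, are harmless because that line misses $B$ by hypothesis.
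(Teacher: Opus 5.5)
Your proof is correct and follows the paper's argument. You split the V-line into two rays, telescope the alternating sum to $D_0f(\vv,\omega)+(-1)^{k-1}D_0f(\vv,\omega+2k\alpha)$, and choose $k$ so that $\omega+2k\alpha$ points into the closed half-plane of directions on the far side of $T_\vv\s$ from $B$; such a $k$ exists because steps of $2\alpha<\pi$ cannot skip an arc of length $\pi$, which the paper phrases as the interval $[k_1,k_2]$ having length $\pi/(2\alpha)>1$. The obstacle you flag is harmless, since the argument is pointwise in $\vv$ and only needs the connected set $B$ to lie in one open half-plane of $T_\vv\s$, and your handling of outward-pointing $\omega$ is slightly more careful than the paper's.
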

\begin{proof}
Without loss of generality, let $\vv = 0$, $T_\vv\s = \{x_2 = 0\}$ and $B \subset \{x_2 > 0\}$. Then, when $\omega \in [\pi,2\pi]$, $D_{0}f = 0$, so we consider the case when $\omega \in [0,\pi]$. Let
$$\gamma = \omega +2k \alpha \iff k = \varphi(\gamma) = \frac{\gamma - \omega}{2\alpha}.$$
Then, $\varphi([\pi,2\pi]) = [k_1,k_2]$, where $k_1 = \frac{\pi - \omega}{2\alpha}\geq 0$ and $k_2 - k_1 = \frac{\pi}{2\alpha} > 1$ since $\alpha < \pi/2$. Thus, there exists an integer $k\geq 1$ with $k \in [k_1, k_2]$. Let $k = k(\omega, \alpha)$ be one of these integers. Then,
\begin{equation}\label{algo}
\begin{split}
\sum_{i=1}^{k(\omega, \alpha)} (-1)^{i-1} Rf(\vv,\omega + (2i-1)\alpha, \cos\alpha) &= D_{0}f(\vv,\omega) + (-1)^{k-1}D_{0}f(\vv,\gamma)\\
 &= D_{0}f(\vv,\omega).
\end{split}
\end{equation}
The last step follows since the ray $L = \{\vv + t(\cos\gamma,\sin\gamma) : t\geq 0\} \subset \{x_2 \leq 0\}$, but the support of $f$ is contained in $\{x_2 > 0\}$ and so $D_{0}f(\vv,\gamma) = 0$.
\end{proof}
For example, in the above theorem, if we set $\alpha = \pi/4$, then $k = k(\omega,\alpha) = 2$ is constant and $\gamma = \omega + \pi$.

\begin{remark}\label{remark_noise}
    Overall, we can expect the above algorithm to become more unstable as $k$ increases. For an example of how this works, let $g_i = Rf_i + \eta_i$ be samples of V-line integral data with added random noise $\eta_i$, where the $\eta_i \sim \mathcal{N}(0,\sigma)$ are i.i.d. Gaussian random variables with mean zero and standard deviation $\sigma$. This noise model is most appropriate when the photon counts are sufficiently large, where $\sigma$ represents the noise level. If we set $2\alpha = \pi/k'$, then $k = k'$ in \eqref{algo}. In this case, when we apply \eqref{algo} to $g$, this yields samples of the form $g'_i = D_{0}f_i + \eta'_i$, where the $\eta'_i \sim \mathcal{N}(0,\sqrt{k'}\sigma)$ are i.i.d. Gaussian random variables. That is, when we convert samples of $Rf$ to $D_{0}f$ with added Gaussian noise in this way, the noise level is essentially increased by a factor of $\sqrt{k'}$. Of course, when $k$ is not constant, this calculation would likely become more complex, but this gives one an idea of how the noise is affected by the algorithm in \eqref{algo}.
\end{remark}

Now, using Corollary \ref{corr_1}, we can equivalently consider the transform
\begin{equation}\label{trans_data}
\mathcal{D}_k(f,\mu)(\vv, \omega) = \int_{0}^\infty \rho^{k} f_\vv(\rho,\omega) e^{ -\int_{0}^\rho\mu_\vv(r,\omega)\mathrm{d}r }  \mathrm{d}\rho,
\end{equation}
where $k = n-2$, $\vx - \vv = \rho\Theta$, and $f_\vv(\rho,\Theta) = f(\vv + \rho\Theta)$ (similarly for $\mu_\vv$). Similar to how $\mathcal{R}$ relates to $R$, we let $D_kf = \mathcal{D}_k(f,0)$, i.e., $D_k$ denotes the linear formulation and $\mathcal{D}_k$ non-linear.

The transform defined in \eqref{trans_data} is very similar to the transform studied in the identification problem in SPECT imaging \cite{holman2020spect,SeanHolman,natterer1981identification,solmon1995identification,stefanov2014identification}, except we have an additional weight $\rho^{n-2}$. In \cite{SeanHolman}, the authors provide conditions for $f$ and $\mu$ to be unique given $\mathcal{D}_0(f,\mu)$. As the conditions of \cite{SeanHolman} are not satisfied in our case, we use a different approach based on the theory of conormal distributions. We first focus on the recovery of the boundary of $\Omega$, which is detailed in the next section.

\subsection{Determining the boundary of $\Omega$} \label{sec:bdet}
In this section, we explain how under certain geometric conditions we can recover the singular support of $f$ (the boundary of $\Omega$) using the transformed data $\mathcal{D}_{n-2}(f,\mu)$. First, we have some definitions.
\begin{definition} \label{def:Strictconv}
Let $M$ be an $(n-1)$-dimension smooth manifold embedded in $\mathbb{R}^n$ and let $\vx \in M$. 
We say that $M$ is strictly convex near $\vx$ if the second fundamental (see \cite{lee2018introduction}) of $M$ is definite (positive or negative) at $\vx$. 
$M$ is said to be strictly convex almost everywhere if for almost all $\vx \in M$, $M$ is locally strictly convex near $\vx$. 
\end{definition}

For example, one could draw a 1D smooth curve in $\mathbb{R}^2$ that is not globally strictly convex, but strictly convex almost everywhere away from a finite set of inflection points.

\begin{definition}[convex hull] The \emph{convex hull} $\Omega_c$ of $\Omega$ is the smallest convex set that contains $\Omega$.

\end{definition}

We now state some of our key geometric assumptions needed for this section.

\begin{assumption}{(Geometric assumptions)}\label{geo_ass}
Let $\Xi = \partial\Omega \cup (\cup_{i=1}^m \partial\Omega_i)$.
\begin{enumerate}
\item\label{conv} $\Xi$ is strictly convex almost everywhere.
\item\label{crit} $\text{supp}(\mu)  \cap \Omega_c = \emptyset$ (i.e., there is not any additional attenuating material in the non-convex ``grooves" of the source)
\item\label{no triple}  There is no line tangent to $\Xi$ at three or more points.
\item\label{all lines} Every line through $B$ intersects $\s$, i.e., $\s$ sufficiently ``wraps around" $B$.
\item \label{no_nonconvex} Any line that is tangent to $\Xi$ at a point where $\Xi$ is not strictly convex, is not tangent to $\Xi$ at any other point.
\end{enumerate}
\end{assumption}

While the Assumptions \ref{geo_ass} do exclude some cases, notably when part of $\Xi$ is flat, parts \eqref{conv}, \eqref{no triple}, and \eqref{no_nonconvex} are generic. We now discuss consequences of these assumptions. First, we have the following proposition.

\begin{proposition}\label{prop_hull}
$\mathcal{D}_{n-2}(f,\mu)$ determines $\Omega_c$.
\end{proposition}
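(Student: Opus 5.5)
The plan is to recover $\Omega_c$ from the zero set of the data. The first step is to observe that, because $f = I_0\chi_\Omega$ with $I_0>0$, the integrand in \eqref{trans_data} is nonnegative. Indeed, $\rho^{n-2}\geq 0$, $f\geq 0$, and the attenuation factor $e^{-\int_0^\rho \mu_\vv}$ is strictly positive, since $\mu$ is bounded (each $\Phi_k$ is smooth on a compact set) and so its line integrals are finite. It follows that $\mathcal{D}_{n-2}(f,\mu)(\vv,\omega)>0$ exactly when the open ray $\{\vv+\rho\omega:\rho>0\}$ meets $\Omega$ in a set of positive one-dimensional measure, and that it vanishes otherwise. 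Since $\Omega$ is a compact domain with smooth boundary, it is the closure of its interior. Hence a ray meets $\Omega$ in positive measure if and only if it meets $\text{int}(\Omega)$, up to the closed, measure-zero family of rays that only graze $\partial\Omega$. The data therefore determine the set $\mathcal{L}^+ = \{(\vv,\omega) : \text{the ray from } \vv \text{ in direction } \omega \text{ meets int}(\Omega)\}$, and the attenuation $\mu$ plays no role at all, which is the key simplification.

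The second step is to pass from rays to full lines using Assumption \ref{geo_ass}\eqref{all lines}. Every line $L$ through $B$, and in particular every line meeting $\Omega\subset B$, intersects $\s$ at some point $\vv$. Since $\s\cap B=\emptyset$, the point $\vv$ lies outside $\Omega$, and $L\cap\Omega$ lies on one side of $\vv$ along $L$ (or on both sides). Consequently, $L$ meets $\text{int}(\Omega)$ if and only if one of the two rays $\{\vv\pm\rho\omega\}$ from such a $\vv\in L\cap\s$ does, where $\omega$ is a direction of $L$. We conclude that the data determine the set $\mathcal{L}$ of all lines meeting $\text{int}(\Omega)$. Lines not passing through $B$ cannot meet $\Omega$, so they are trivially excluded.

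The final step is the convex-geometric identification. The closed convex hull satisfies
$$\Omega_c = \bigcap\{H : H \text{ a closed half-space containing } \Omega\},$$
and a hyperplane $P$ satisfies $P\cap\text{int}(\Omega)=\emptyset$ with $\Omega$ on one side exactly when every line in $P$ avoids $\text{int}(\Omega)$. I would therefore recover the support function directly from $\mathcal{L}$. For each unit normal $\nu$, define $s^*(\nu)=\sup\{s : \text{some line in } \{\vx\cdot\nu=s\} \text{ lies in } \mathcal{L}\}$. This equals $\sup_{\vx\in\text{int}\,\Omega}\vx\cdot\nu$, which in turn equals the support function of $\Omega_c$ because $\Omega=\overline{\text{int}\,\Omega}$. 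Then $\Omega_c=\bigcap_\nu\{\vx\cdot\nu\leq s^*(\nu)\}$. In dimension $n=2$ lines coincide with hyperplanes, so this is immediate, and in general it only uses lines lying in hyperplanes. Alternatively, one can describe $\Omega_c$ as the closure of the set of points $\vx$ such that every hyperplane through $\vx$ contains a line of $\mathcal{L}$.

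The main obstacle, modest though it is, is the measure-zero and closure bookkeeping. Grazing rays, where the data vanish even though the ray touches $\partial\Omega$, must be handled by taking suprema and closures as above. One must also check that rays from $\vv$ do not need to pass through $\s$ again. This causes no difficulty, since the integral in \eqref{trans_data} is over the whole ray regardless of $\s$, and $\mu$ enters only through a positive factor.
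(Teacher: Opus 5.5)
Your proof is correct, but it finishes by a different route than the paper.

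\textbf{The paper's route.} The paper's entire proof is the formula \eqref{equ_hull}. With $\Gamma_0$ the zero set of the data, $\Omega_c$ is taken to be the closure of the set of points lying on no line $L_{\vv,\omega}$ with $(\vv,\omega)\in\Gamma_0$. The only justification given is Assumption \ref{geo_ass}(\ref{all lines}).

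\textbf{What you make explicit.} You spell out three steps the paper leaves implicit.
\begin{enumerate}
\item The zero set of the data does not depend on $\mu$, because the integrand is nonnegative and the attenuation factor is strictly positive.
\item A line $L$ has to be tested by both rays from a point of $L\cap\s$. Read literally, $\Gamma_0$ contains outward directions at $\vv$ whose full lines $L_{\vv,\omega}$ pass through $\Omega$. So the paper's union must be understood as running over lines on which the data vanish in both directions, which is exactly your second step.
\item $\Omega_c$ is recovered from hyperplanes through the support function.
\end{enumerate}

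\textbf{What each route buys.} The paper's ``complement of all lines missing $\Omega$'' construction can be implemented directly on a sinogram; it is what is used in Section \ref{results}. In the plane it is just the separating-line characterization of $\Omega_c$ for connected $\Omega$. For $n\geq 3$, however, the set of points through which every line meets $\Omega$ can be strictly smaller than $\Omega_c$. The hole of a solid torus is an example: it is swept out by lines parallel to the axis that miss $\Omega$. So that formula needs justification or shape restrictions beyond what the paper states.

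Your support-function version works in every dimension and does not need $\Omega$ to be connected. It also handles grazing rays, and any flat pieces of $\partial\Omega$, automatically. The lines you detect lie between those meeting $\text{int}(\Omega)$ and those meeting $\Omega$, and both families give the same suprema. Your claim that the data determine exactly $\mathcal{L}^+$ is slightly imprecise for rays running along $\partial\Omega$, but the supremum step makes this irrelevant. Your alternative description via hyperplanes through $\vx$ does use connectedness of $\text{int}(\Omega)$. That holds here, since $\Omega$ is a domain with smooth boundary.
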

\begin{proof}
Let $\Gamma_0 = \{(\vv,\omega) : \mathcal{D}_{n-2}(f,\mu)(\vv,\omega) = 0\}$. Then, because of Assumption \ref{geo_ass}(\ref{all lines}),
\begin{equation}
\label{equ_hull}
\Omega_c = \overline{\mathbb{R}^n \backslash \paren{ \cup_{(\vv,\omega) \in \Gamma_0} L_{\vv,\omega} }},
\end{equation}
where $L_{\vv,\omega} = \{\vv +t\omega : t\in \mathbb{R}\}$.
\end{proof}

This is to say that we can ascertain a rough location and shape of the source simply by considering the zeros in the data.


As we will now describe, Assumption \ref{geo_ass}(\ref{crit}) allows us to rewrite $\mathcal{D}_{n-2}(f,\mu)$ in a separated form. Let
$$\Lambda = \cup_{\vx \in \Omega_c} \cup_{t\in [0,1]} \{\vv + t(\vx - \vv)\},$$
and, for some $\epsilon > 0$, let
\[
\Lambda_{\epsilon} = \{ \vx \in \mathbb{R}^n \ : \  \exists \vy \in \Lambda\ \text{s.t.}\ |\vx - \vy| < \epsilon\}
\]
be a slightly larger set. Suppose that $\varphi_{\vv} \in C_{c}^\infty(\mathbb{R}^3)$ is equal to $1$ on $\Lambda$ and has support contained in $\Lambda_{\epsilon}$. Then we define
\[
\tilde{\mu}_\vv(\rho,\omega) = \varphi_\vv(\rho,\omega) \mu_\vv(\rho,\omega)
\]
and, by Assumption \ref{geo_ass}(\ref{crit}),
\begin{equation}\label{prod_data}
\begin{split}
\mathcal{D}_{n-2}(f,\mu)(\vv,\omega) &= e^{ -\int_0^\infty \tilde{\mu}_\vv(\rho,\omega)\mathrm{d}\rho }  \int_0^\infty \rho^{n-2} f_\vv(\rho,\omega) \mathrm{d}\rho \\
&= e^{ -\int_0^\infty \tilde{\mu}_\vv(\rho,\omega)\mathrm{d}\rho }  D_{n-2}f(\vv,\omega).
\end{split}
\end{equation}
Note that $\WF(\mu_\vv) = \WF(\tilde{\mu}_\vv)$ on $T^*\Lambda_\epsilon$. 


Since the supports of $f$ and $\mu$ are assumed to be disjoint from $\s$, we can write $D_{n-2}f = X_{w_2}f$, where $X_{w_2}f(\vv,\omega) = \int_{\mathbb{R}}w_2(\vv,\omega,t) f(\vv + t\omega) \mathrm{d}t$ is a weighted X-ray transform with $w_2 \geq 0$ a smooth weight which is zero on the half-space $\{(\vx - \vv)\cdot\omega < 0\}$. Similarly, we can write $\int_0^\infty \tilde{\mu}_\vv(\rho,\omega)\mathrm{d}\rho = X_{w_1}\mu(\vv,\omega)$ for an appropriate smooth weight $w_1$ using the construction above. 
Then, \eqref{prod_data} becomes
\begin{equation}\label{prod_data_1}
\mathcal{D}_{n-2}(f,\mu)(\vv,\omega) = e^{ -X_{w_1}\mu(\vv,\omega) }  X_{w_2}f(\vv,\omega),
\end{equation}
where the $w_i \geq 0$ are smooth weights and $w_2 > 0$ on the region of interest, $B$. We next analyze the product in \eqref{prod_data_1} to classify $\mathcal{D}_{n-2}(f,\mu)$ in a $I^{p,l}$ space as introduced in Definition \ref{pair_lag}.

Let $\Gamma = \Gamma_f \cup \Gamma_\mu = \text{ssupp}(X\chi_{\Xi}) = \text{ssupp}(X(f+\mu))$ where $Xf = X_1 f$ is the unweighted X-ray transform of $f$, $\Gamma_f = \text{ssupp}(Xf)$, and $\Gamma_\mu = \text{ssupp}(X\mu)$. Equivalently, $\Gamma$ represents the set of all lines tangent to $\Xi$. Let $L_{\vv,\omega} = \{\vv + t\omega : t\in \mathbb{R}\}$ be a ray. Then, we consider two scenarios:
\begin{itemize}
\item Case 1 - $L_{\vv,\omega}$ is tangent to $\Xi$ at exactly one strictly convex point.
\item Case 2 - $L_{\vv,\omega}$ is tangent to $\Xi$ at exactly two strictly convex points.
\end{itemize}
Note, we do not need to consider triple tangencies by Assumption \ref{geo_ass}(\ref{no triple}). We first consider case 1 and show, in that case, that $\mathcal{D}_{n-2}(f,\mu)$ is locally a conormal distribution (see Definition \ref{def_conormal}). For the following, we will use the notation  $\Sigma = \cup_{i=1}^m \partial \Omega_i$.\\
\\
\textbf{Case 1.} Let $(\vx_0,\xi_0) \in N^*\Sigma$, where $\Sigma$ is locally strictly convex near $\vx_0$. Then, we can parameterize rays near $\vx_0$ using coordinates $\vy = (s,\vx,\theta)$ as follows. Let $L_\vy = \{\vx + s\xi(\vx) + t\theta : t\in \mathbb{R}\}$, where $s\in \mathbb{R}$, $(\vx,\xi)$ is on a local neighborhood of $(\vx_0,\xi_0)$ on $\WF(\mu)$, and $\theta \in \xi^{\perp} \cap S^{n-1}$, i.e., $\theta \in S^{n-1}$ is orthogonal to $\xi$. Here, $\vx \in \Sigma$ near $\vx_0$ and $\xi(\vx)$ is the corresponding normal to $\Sigma$ at $\vx$, and we assume $\xi$ points towards the convex side of $\Sigma$. Then, we can reformulate the X-ray transform in these coordinates
\begin{equation}
X_w\mu(\vy) = \int_{\mathbb{R}} w(\vy) \mu(\vx + s\xi(\vx) + t\theta) \mathrm{d}t,
\end{equation}
locally near $\vy_0 = (0,\vx_0,\theta_0)$, where $L_{\vy_0}$ is tangent to $\Sigma$ at $\vx_0$. By Assumption \ref{geo_ass}(\ref{all lines}), we can thus reformulate \eqref{prod_data_1} in these coordinates near $\vy_0$,
\begin{equation}\label{prod_data_2}
\mathcal{D}_{n-2}(f,\mu)(\vy) = e^{ -X_{w_1}\mu(\vy) }  X_{w_2}f(\vy),
\end{equation}
where the $w_i \geq 0$ are smooth weights and $w_2 > 0$ on $B$.

Now we have the lemma which describes the singularities of the X-ray transform in these coordinates.
\begin{lemma}
Let $L_{\vy_0}$ be tangent to $\Sigma$ at only one point $\vx_0$, where $\Sigma$ is locally strictly convex near $\vx_0$. Then, $X_w\mu(\vy)$ takes the form
\begin{equation}\label{form_sr}
X_{w}\mu(\vy) = g_0(\vy) + g_1(\vy)s^{1/2}_+,
\end{equation}
near $\vy_0$, where $\vx \in \Sigma$ is near $\vx_0$, the $g_i \geq 0$ are smooth.
\end{lemma}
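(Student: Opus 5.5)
We plan to localize to the single component $\Omega_k$ of $\text{supp}(\mu)$ whose boundary contains $\vx_0$, and then reduce everything to a one-dimensional computation along the line $L_\vy$. Since $L_{\vy_0}$ is tangent to $\Sigma$ only at $\vx_0$, for $\vy$ near $\vy_0$ every other crossing of $L_\vy$ with $\Xi$ is transversal. The contributions of those transversal crossings are smooth in $\vy$: the entry and exit parameters $t$ are smooth functions of $\vy$ by the implicit function theorem, and the integrand is smooth between them. We absorb all of these into $g_0$, together with any component of $\mu$ that $L_\vy$ misses. It therefore suffices to treat $\int w(\vy,t)\Phi_k(\vx+s\xi+t\theta)\chi_{\Omega_k}(\vx+s\xi+t\theta)\,\mathrm{d}t$, localized in $t$ near the tangency parameter $t=0$.

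Near $\vx_0$ we will write $\Omega_k$ locally as a graph over the tangent hyperplane. Choose orthonormal coordinates $(\sigma,\vz)\in\mathbb{R}\times\mathbb{R}^{n-1}$ centred at $\vx$, with $\sigma$ along $\xi(\vx)$. Then $\Omega_k=\{\sigma\ge \psi(\vx,\vz)\}$ locally, where $\psi(\vx,0)=0$ and $\nabla_\vz\psi(\vx,0)=0$. By strict convexity at $\vx_0$ and the choice of $\xi$ pointing to the convex side, the Hessian $\nabla^2_\vz\psi$ is positive definite, uniformly for $\vx$ near $\vx_0$. The line $L_\vy$ is $\sigma=s$, $\vz=t\theta$ with $\theta\perp\xi$, so the points of the line inside $\Omega_k$ satisfy $s\ge q(t):=\psi(\vx,t\theta)$. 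Here $q(0)=q'(0)=0$ and $q''(0)>0$. Hence, by Morse's lemma with parameters, $q(t)=\tau^2$ for a smooth change of variables $\tau=\tau(\vx,\theta,t)$ with $\partial_t\tau>0$. The integral then becomes $\int_{\tau^2\le s} a(\vy,\tau)\,\mathrm{d}\tau$ with $a$ smooth, and it is zero for $s<0$. (In this setup, positive $s$ means we move into $\Omega_k$.)

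Next we compute $\int_{-\sqrt{s}}^{\sqrt{s}} a(\vy,\tau)\,\mathrm{d}\tau$ for $s\ge0$. Substituting $\tau=\sqrt{s}\,u$ gives $\sqrt{s}\int_{-1}^1 a(\vy,\sqrt{s}u)\,\mathrm{d}u$. Only even powers of $\sqrt{s}u$ survive the symmetric integration, so this equals $s_+^{1/2}\,g_1(\vy)$, where $g_1(\vy)=\int_{-1}^1 a(\vy,\sqrt{s}u)\,\mathrm{d}u$ is a smooth function of $(s,\vx,\theta)$. This is the standard fact that the even part of a smooth function is a smooth function of the square. Moreover $g_1(\vy_0)=2a(\vy_0,0)\ge0$, and positivity of $g_0$ and $g_1$ is inherited from $w\ge0$ and $\Phi_k>0$. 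Near the tangency, the remaining contributions to $g_0$ are nonnegative integrals.

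The main obstacle is the rigorous bookkeeping. First, $\Omega_k$ may be entered and exited elsewhere along $L_\vy$ far from $\vx_0$, and those pieces must be split off with a partition of unity in $t$ while keeping the local piece exactly of the form above. Second, the local truncation must be chosen so that the cut-off endpoints stay away from the tangency and contribute smoothly. Third, the formula must hold uniformly for $s$ of both signs, including the claim that the contribution vanishes identically for $s<0$; this uses that, for small $|s|$, $L_\vy$ meets $\Omega_k$ near $\vx_0$ only in the short chord. Finally, "$g_i\ge0$" should be read as holding for the nonnegative local contributions; the smoothness claims are what matter downstream.
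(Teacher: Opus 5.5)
Your proof is correct and follows the same route as the paper. Both reduce to the line integral in the plane through $\vx$ spanned by $\xi(\vx)$ and $\theta$, where $\Sigma$ cuts out a strictly convex curve; the paper then cites Katsevich for the square-root behaviour of the chord integral, whereas you prove it directly via the local graph $\psi$, the parametric Morse lemma and the fact that an even smooth function of $\tau$ is a smooth function of $\tau^2$.

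One small caveat applies equally to the paper: writing $\Omega_k=\{\sigma\ge\psi\}$ with $\nabla^2_\vz\psi$ positive definite tacitly assumes $\Omega_k$ lies on the convex side of $\Sigma$ at $\vx_0$. If it lies on the concave side, which the definite-second-fundamental-form notion of strict convexity permits, the same computation gives $g_0-g_1 s_+^{1/2}$ with $g_1\ge 0$, so only the claimed sign of the coefficient fails, not the smooth-plus-$s_+^{1/2}$ structure used downstream.
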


\begin{proof}
The set $\Gamma$ of lines tangent to $\Sigma$ near $\vx_0$ is defined by $\{s = 0\}$ locally.
Now, letting $\vx \in \Sigma$ near $\vx_0$ and $\theta \in \xi^{\perp}\cap S^{n-1}$, we define the part of 2-D plane 
$$\mathcal{L} = \cup_{s\in \mathbb{R}} \{\vx + s\xi(\vx) + t\theta : t\in \mathbb{R}\},$$
which can be paramaterized in terms of $(s,t)$. Then, for $\epsilon > 0$ small enough, $\gamma = B_\vx(\epsilon) \cap \Sigma \cap \mathcal{L}$ is a smooth strictly convex 1-D curve embedded in $\mathcal{L}$. This is true because $\Sigma$ is locally strictly convex near $\vx_0$. Let $\tilde{\mu}(s,t) = \mu(\vx + s \xi(\vx) + t \theta) : \mathbb{R}^2\to \mathbb{R}$ and $\tilde{w}(s,t) = w(\vx + s \xi(\vx) + t \theta) : \mathbb{R}^2\to \mathbb{R}$. Then, $X_w\mu(s,\vx,\theta) = \int_\mathbb{R} \tilde{w}(s,t) \tilde{\mu}(s,t) \mathrm{d}t$ is simply a weighted classical 1-D Radon projection of $\tilde{\mu}$. Since by construction, $\tilde{\mu}$ has a jump singularity along $\gamma$, and $\gamma$ is strictly convex near $\vx$, the Radon projection adopts a square root type singularity \cite{katsevich2026analysis}, thus leading to \eqref{form_sr}.
\end{proof}

Using the above lemma, we have
\begin{equation}
X_{w_1}\mu(\vy) = g_0(\vy) + g_1(\vy)s^{1/2}_+,
\end{equation}
near $\vy_0$, where the $g_i \geq 0$ are smooth. We now explain how the exponential function affects such a distribution following similar ideas to \cite{katsevich2026analysis}. Using the Taylor series of $e^{-x}$ about $x = 0$, we have
\begin{equation}\label{taylor}
\begin{split}
 e^{-(g_0 + g_1s^{1/2}_+)} &= e^{-g_0}\paren{ 1 + \sum_{k=1}^\infty (-1)^k \frac{g_1^k s^{k/2}_+}{k!} }\\
&= e^{-g_0}\paren{ 1 +  h_o s^{1/2}_+ + h_e s_+  },
\end{split}
\end{equation}
where
$$h_o = \sum_{k\geq 1\ \text{odd}}^\infty (-1)^k \frac{g_1^k s^{(k-1)/2}}{k!},$$
and
$$h_e = \sum_{k\geq 2 \ \text{even}}^\infty (-1)^k \frac{g_1^k s^{(k-2)/2}}{k!},$$
are both smooth. Thus, we have
\begin{equation}
e^{-X_{w_1}\mu(\vy)} = h_0(\vy) + h_1(\vy)s^{1/2}_+ + h_2(\vy) s_+,
\end{equation}
near $\vy_0$, where the $h_i$ are smooth and $h_0 = e^{-g_0} > 0$. Note that $2(n-1)$ is the dimension of the set of lines in $\mathbb{R}^n$. 
Thus, using Proposition \ref{prop_sqrt}, we have
\begin{equation}
e^{-X_{w_1}\mu} \in I^{- \frac{(n+1)}{2}}(N^*\Gamma) = I^{- \frac{3}{2}}(\Gamma)
\end{equation}
near $\vy_0$. Since $L_{\vy_0}$ is tangent to $\Xi$ only at $\vx_0 \in \Sigma$ and $\Sigma \cap \partial \Omega = \emptyset$, $X_{w_2}f$ is clearly smooth near $\vy_0$ and thus
\begin{equation}
\mathcal{D}_{n-2}(f,\mu) \in  I^{- \frac{(n+1)}{2}}(N^*\Gamma)
\end{equation}
near $\vy_0$. Note, the calculations above are analogous (and slightly easier) if $(\vx_0,\xi_0) \in N^* (\partial\Omega)$ instead of $N^* \Sigma$, so that case is left to the reader. However, the result is the same. Using Definition \ref{def_conormal}, it follows that these singularities of the data (i.e., near such $\vy_0$) are in $H^{\alpha}$ microlocally for $\alpha < 1$.\\
\\
\textbf{Case 2.} Now, we consider double tangent lines and show that $\mathcal{D}_{n-2}(f,\mu)$ is locally in an $I^{p,l}$ class. The ray can either be double tangent to $\partial \Omega$ or $\Sigma$, or single tangent to $\partial \Omega$ and $\Sigma$. Let $L_{\vy_0}$ be tangent to $\Xi$ at two distinct points $\vx_1, \vx_2 \in \Sigma$, where $\Sigma$ is locally strictly convex near $\vx_1,\vx_2$. Note, we can parameterize rays locally near $\vy_0$ in this case using local neighborhoods of either $\vx_i$ on $\Sigma$ as in the single tangent case. The other possibilities (e.g., when $L_{\vy_0}$ is single tangent to $\partial \Omega$ and $\Sigma$) are again analagous. Let $\varphi_1, \varphi_2$ be smooth cutoffs centered on $\vx_1,\vx_2$. Then, locally near $\vy_0$, $\Gamma = \Gamma_1 \cup \Gamma_2$ is the union of two intersecting smooth manifolds, where $\Gamma_i = \ssupp(X(\varphi_i \mu))$ near $\vy_0$.

\begin{proposition}\label{prop_transverse}
The $\Gamma_i$ intersect transversally at $\vy_0$. 
\end{proposition}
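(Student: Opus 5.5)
To prove that $\Gamma_1$ and $\Gamma_2$ meet transversally at $\vy_0$, we compare their conormal directions, i.e. the covectors in $T^*_{\vy_0}$ (lines) normal to each hypersurface. By the Case 1 lemma, each $\Gamma_i$ is locally a smooth hypersurface in the $2(n-1)$-dimensional manifold of lines, given by $s_i=0$ in the coordinates adapted to $\vx_i$. Transversality is therefore equivalent to the conormal lines $N^*_{\vy_0}\Gamma_1$ and $N^*_{\vy_0}\Gamma_2$ being distinct, or equivalently $\mathrm{d}s_1\not\parallel \mathrm{d}s_2$ at $\vy_0$.

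To compute these conormals we will use the microlocal description of the X-ray transform as an FIO. Its canonical relation sends $(\vx_i,\xi_i)\in N^*\Sigma$, with $\xi_i$ the normal to $\Sigma$ at $\vx_i$, to a covector over the line $L_{\vy_0}$. In the parametrization $L=\{\vz+t\theta\}$, with $\vz\in\theta^\perp$ (moving to these coordinates by a local diffeomorphism is harmless for transversality), the image covector is
\[
\eta_i=(\xi_i,\ -(\vx_i\cdot\xi_i)\,\theta^\ast)
\]
up to the standard identification: the $\vz$-component is $\xi_i$ and the $\theta$-component is $-t_i\xi_i$, where $\vx_i=\vz_0+t_i\theta_0$. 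Since $\Gamma_i$ is the singular support of $X(\varphi_i\mu)$ and $\WF$ of that transform lies in the image of $N^*\Sigma$ near $\vx_i$, this $\eta_i$ spans $N^*_{\vy_0}\Gamma_i$. A direct alternative is to differentiate the defining function: $\Gamma_i$ is $\{\min_t \rho_i(\vz+t\theta)=0\}$ with $\rho_i$ a defining function of $\Sigma$ near $\vx_i$, and the envelope theorem gives $\mathrm{d}=\nabla\rho_i(\vx_i)\cdot(\delta\vz+t_i\,\delta\theta)$.

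The key step is to check that $\eta_1$ and $\eta_2$ are not parallel. Both $\xi_1$ and $\xi_2$ are orthogonal to $\theta_0$. If $\xi_1\not\parallel\xi_2$, the $\vz$-components already differ in direction and we are done. If $\xi_1=c\,\xi_2$, then parallelism of $\eta_1,\eta_2$ would force $t_1=t_2$, i.e. $\vx_1=\vx_2$, contradicting the assumption that the two tangency points are distinct. Strict convexity near each $\vx_i$, from Assumption \ref{geo_ass}(\ref{conv}), guarantees that each $\Gamma_i$ is a genuine smooth hypersurface with nonvanishing differential $\mathrm{d}s_i$. Assumption \ref{geo_ass}(\ref{no triple}) ensures that no third sheet is present.

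The main obstacle is the careful identification of the conormal of $\Gamma_i$ in the $(s,\vx,\theta)$ coordinates. We would handle this with the envelope computation above, noting that the Hessian in $t$ is nondegenerate by strict convexity, so the implicit function theorem applies.
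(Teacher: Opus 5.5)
Your proof is correct and is essentially the paper's argument. The paper argues by contradiction that parallel conormals at $\vy_0$ would force $\Cc_X(\vx_1,\xi_1)=\Cc_X(\vx_2,\xi_2)$ after rescaling, contradicting the Bolker condition for $X$; you instead verify that same injectivity by hand from the explicit form $\eta_i=(\xi_i,-t_i\xi_i)$, and your envelope computation also justifies identifying $N^*_{\vy_0}\Gamma_i$ with the image covector, which the paper takes for granted. One small slip: the displayed $\theta$-component should be $-t_i\xi_i$ with $t_i=\vx_i\cdot\theta_0$ (up to a convention-dependent sign), not $-(\vx_i\cdot\xi_i)\theta^\ast$, as your own verbal description correctly states.
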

\begin{proof}
Let $\xi_i$ be the corresponding covector to $\vx_i$ on $\Sigma$, so that $(\vx_i,\xi_i) \in N^*\Sigma$. Let $\mathcal{C}_X$ denote the canonical relation of $X$. Let us assume the $\Gamma_i$ do not intersect tranversally at $\vy_0$. Then, the corresponding covectors to $\vy_0$ on $\Gamma_i$ are both parallel to some $\eta$. This means that $\Cc_X(\vx_1,\xi_1) = \Cc_X(\vx_2,\xi_2) = (\vy_0,\eta)$, but this is not possible since $\vx_1 \neq \vx_2$ and $X$ satisfies the Bolker condition (\cite[Proposition 3.1]{monard2015geodesic}). Thus, we have a contradiction and the $\Gamma_i$ intersect transversally at $\vy_0$.
\end{proof}

Proposition \ref{prop_transverse} was also proven in a more general context in \cite[Lemma 4.2]{chihara2026geodesic}. Let $\Cc_X(\vx_i,\xi_i) = (\vy_0,\eta_i)$ for $i = 1,2$, where $\eta_1$ is not parallel to $\eta_2$. Then, in this case,
\begin{equation}\label{prod_d}
\mathcal{D}_{n-2}(f,\mu)(\vy) = \varphi(\vy) e^{-X_{w_1}(\varphi_1\mu)} \cdot e^{-X_{w_1}(\varphi_2\mu)} = \varphi(\vy) u_1(\vy)u_2(\vy),
\end{equation}
near $\vy_0$, where $\varphi$ is smooth and $u_i \in  I^{- \frac{3}{2}}(\Gamma_i)$, noting that $X_{w_2}f$ is smooth near $\vy_0$. Since the $\Gamma_i$ intersect transversally at $\vy_0$, we have from {\cite[Theorem 8.2.10]{hormanderI}
$$\WF_{\vy_0}(\mathcal{D}_{n-2}(f,\mu)) \subset \text{span}(\eta_1,\eta_2).$$
That is, taking the product of the $u_i$ as above can create new singularities in the data at $\vy_0$ in directions which are not parallel to either $\eta_i$, i.e., singularities which are not part of the linear X-ray transform. 

We now have the following theorem.
\begin{theorem}\label{double}
Let $L_{\vy_0}$ be tangent to $N^* \Xi$ at precisely two points $(\vx_i,\xi_i)$, $i = 1,2$, where $\Xi$ is locally strictly convex near each $\vx_i$.
Then
\begin{equation}
\mathcal{D}_{n-2}(f,\mu) \in I^{- \frac{(n+1)}{2},-1}(N^*(\Gamma_1 \cap \Gamma_2),N^*\Gamma_1) + I^{- \frac{(n+1)}{2},-1}(N^*(\Gamma_1 \cap \Gamma_2),N^*\Gamma_2)
\end{equation}
locally near $\vy_0$.
\end{theorem}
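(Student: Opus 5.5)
The plan is to start from the local product representation \eqref{prod_d}: near $\vy_0$ we have $\mathcal{D}_{n-2}(f,\mu) = \varphi\, u_1 u_2$. Here $\varphi$ is smooth and each $u_i$ is, by the Case 1 analysis, of the form $h_0^{(i)} + h_1^{(i)} s_i^{1/2} {}_+ + h_2^{(i)} (s_i)_+$. The variable $s_i$ is a local defining function for $\Gamma_i$, obtained by parametrizing rays near $\vx_i$ as in Case 1. If one tangency is on $\partial\Omega$ rather than $\Sigma$, the corresponding factor is instead $X_{w_2}(\varphi_i f)$. That factor is again smooth plus a $s_{i,+}^{1/2}$-type term, because $X_w\chi$ for a strictly convex jump has the same square-root structure, so it lies in the same conormal class. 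By Proposition \ref{prop_sqrt}, each non-smooth piece of $u_i$ lies in $I^{-3/2}(\Gamma_i) = I^{-(n+1)/2}(N^*\Gamma_i)$, since $(s_i)_+ \in I^{-5/2}(\Gamma_i)\subset I^{-3/2}(\Gamma_i)$. The ambient space of lines has dimension $2(n-1)$ and $\Gamma_i$ has codimension one.

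Next, expand the product $u_1u_2$ into four types of terms.
\begin{itemize}
\item Smooth$\cdot$smooth: this is smooth.
\item Smooth$\cdot I^{-3/2}(\Gamma_i)$: this stays in $I^{-3/2}(\Gamma_i)$, which is contained microlocally in the stated $I^{p,l}$ class away from the intersection and is harmless.
\item The genuinely interacting term $I^{-3/2}(\Gamma_1)\cdot I^{-3/2}(\Gamma_2)$.
\end{itemize}
Proposition \ref{prop_transverse} gives transversality of $\Gamma_1,\Gamma_2$ at $\vy_0$. Lemma \ref{lem_green}, with $Y=\Gamma_1$, $Z=\Gamma_2$ and $\beta=\beta'=-3/2$, then yields
\[
I^{-3/2}(\Gamma_1)\cdot I^{-3/2}(\Gamma_2)\subset I^{-3/2,-3/2}(\Gamma_1,\Gamma_1\cap\Gamma_2)+I^{-3/2,-3/2}(\Gamma_2,\Gamma_1\cap\Gamma_2).
\]

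Finally, translate into Lagrangian indices using the second line of \eqref{pair_lag_e}. Take $Y_1=\Gamma_i$ with $d_1=1$, $Y_2=\Gamma_1\cap\Gamma_2$ with $d_2=1$, and ambient dimension $2(n-1)$ in place of $n$. This gives $p = \nu + \tfrac{d_1}{2} - \tfrac{2(n-1)}{4} = -\tfrac32+\tfrac12-\tfrac{n-1}{2} = -\tfrac{n+1}{2}$ and $l=\nu'+\tfrac{d_2}{2} = -1$. The pair is $(N^*(\Gamma_1\cap\Gamma_2), N^*\Gamma_i)$, which is the claim. It remains to absorb the single-conormal terms from the second bullet. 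Microlocally away from $N^*(\Gamma_1\cap\Gamma_2)$ they lie in $I^{p}(N^*\Gamma_i)$ with the correct $p=-(n+1)/2$, matching the second inclusion in Definition \ref{pair_lag}. Near the intersection, a conormal distribution to $\Gamma_i$ belongs to $I^{p,l}$ for any $l$; this follows from the standard inclusion $I^{p}(\Lambda_2)\subset I^{p,l}(\Lambda_1,\Lambda_2)$, which I would cite from \cite{greenleaf1993recovering}.

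The main obstacle is the bookkeeping in the index conversion. One must be careful that Lemma \ref{lem_green} is stated with $I^\beta(Y)$ in the $\nu$-normalization, i.e. the symbol order $-3/2$, and not the $N^*$-order $-(n+1)/2$. One must also use the dimension $2(n-1)$ of the line space rather than $n$. A mismatch here would shift $p$ by a multiple of $n/4$, so I would double-check it against the Case 1 computation, where $-3/2 \leftrightarrow -(n+1)/2$ was already fixed.
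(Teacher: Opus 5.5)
Your proposal is correct and follows the paper's proof: the product form \eqref{prod_d}, transversality from Proposition \ref{prop_transverse}, Lemma \ref{lem_green} with $\beta=\beta'=-3/2$, and the second line of \eqref{pair_lag_e} with $d_1=d_2=1$ in ambient dimension $2(n-1)$. Your splitting of $u_i$ into smooth and singular parts, and the absorption step that follows, is unnecessary, because smooth functions lie in every $I^{\beta}(\Gamma_i)$ and so the lemma applies to $u_1u_2$ directly. Separately, $(s_i)_+$ has order $-2$ rather than $-5/2$ in your normalization, which is harmless since it is still contained in $I^{-3/2}(\Gamma_i)$.
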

\begin{proof}
By \eqref{prod_d}, we have, locally near $\vy_0$,
\begin{equation}
\mathcal{D}_{n-2}(f,\mu)(\vy) = \varphi (u_1 \cdot u_2),
\end{equation}
where $\varphi$ is smooth and $(u_1 \cdot u_2)\in I^{- \frac{3}{2}}(\Gamma_1) \cdot I^{- \frac{3}{2}}(\Gamma_2)$. Since the $\Gamma_i$ intersect transversally by Proposition \ref{prop_transverse}, we can apply Lemma \ref{lem_green} to yield
\begin{equation}
\mathcal{D}_{n-2}(f,\mu)\in  I^{- \frac{3}{2},- \frac{3}{2}}(\Gamma_1,\Gamma_1 \cap \Gamma_2) + I^{- \frac{3}{2},- \frac{3}{2}}(\Gamma_2, \Gamma_1 \cap \Gamma_2)
\end{equation}
locally near $\vy_0$. Now, the $\Gamma_i$ are smooth codimension 1 submanifolds of $\mathbb{R}^k$, where $k = 2(n-1)$. Since they intersect transversally, their intersection is a codimension 2 smooth submanifold of $\mathbb{R}^k$. Thus, using the notation of Definition \ref{pair_lag}, we can calculate $d_1 = d_2 = 1$, and using \eqref{pair_lag_e} yields the result.
\end{proof}

Thus, it follows from the above theorem that 
\begin{equation}
\mathcal{D}_{n-2}(f,\mu) \in I^{- \frac{(n+1)}{2}-1}(N^*(\Gamma_1 \cap \Gamma_2)\backslash (N^*\Gamma_1 \cup N^* \Gamma_2) ),
\end{equation}
and $\mathcal{D}_{n-2}(f,\mu)$ is in $H^{\alpha}$ microlocally near $(\vy_0,\eta)$ for any $\eta$ not parallel to $\eta_1, \eta_2$, and for $\alpha < 2$, using Definition \ref{def_conormal}. Putting this all together, we see there are two distinct types of singularities in the data: those which correspond to the linear X-ray transforms of $\mu$ and $f$, which blow up in $H^1$, and those which occur at double tangent rays to $\Xi$, which are in $H^{\alpha}$ for $\alpha < 2$ (i.e., they do not blow up in $H^1$). The singularities which correspond to the linear transform (which we want to tease out) are thus stronger than those created by the non-linearities.

Singularities in the data will also occur at lines tangent to $\Xi$ at points where $\Xi$ is not strictly convex. These points will correspond to higher order data singularities in general, although we do not analyze them in more detail. In the generic situation described by Assumption \ref{geo_ass}, these higher order singularities will not affect our ability to recover $\partial \Omega$ as we will see later in Corollary \ref{corr_bound}.

\subsection{Singularities of the reconstruction}\label{sing_recon} Now that we have described the singularities of the data, we analyze how these singularities translate to the image domain.

Let $g = \mathcal{D}_{n-2}(f,\mu)$. Then, we aim to recover the singularities of $f$ via the reconstruction
\begin{equation}
f_r = X^{-1}g,
\end{equation}
where $X^{-1}$ is the (unweighted) inverse X-ray transform \cite[Theorem 2.1]{natterer}. Let $\rho : T^*B \to B$ be the natural projection. Let $\mathcal{L}_a$ denote the union of all double tangent rays in the data set. Then, by the above analyses, we have
\begin{equation}
\ssupp(X^{-1}g) \subset \Xi \cup \mathcal{L}_a,
\end{equation}
where the $\vx \in \mathcal{L}_a\backslash \Xi$ satisfy $\vx = \rho\paren{ C_X^t(\vy_0,\eta) }$, where $\vy_0$ corresponds to a double tangent ray, and
$$\eta \in \text{span}(\eta_1,\eta_2) \backslash (\text{span}(\eta_1) \cup \text{span}(\eta_2)),$$
where the $\eta_i$ are as described in Theorem \ref{double}. It is noted also that, since $w_2 > 0$ on $B$, $\text{ssupp}(Xf) \subset \text{ssupp}(g)$. Combining this with the above analysis shows that almost all elements of $\partial \Omega$ are included in $\ssupp(X^{-1}g)$, potentially excluding the parts of $\partial \Omega$ which are not locally strictly convex, which by Assumption \ref{geo_ass}(\ref{conv}) is a set of measure zero on $\partial \Omega$. $\ssupp(X^{-1}g)$ can also include elements of $\Sigma$, which we do not desire at this stage as we are interested in recovering $\partial \Omega$. To remedy this, let $\psi$ be a smooth cutoff which is equal to 1 on $\Omega_c$ (the convex hull of $\Omega$) and zero on $\text{supp}(\mu)$. This is possible since we have assumed that $\Omega_c\cap \text{supp}(\mu) = \emptyset$. Note, $\Omega_c$ can be determined using Proposition \ref{prop_hull}. Then, we have 
\begin{equation}
\ssupp(\psi X^{-1}g) \subset \partial \Omega \cup \mathcal{L}_a,
\end{equation}
and we can remove the singularities of $\mu$ in the reconstruction. It is also true that almost all elements of $\partial \Omega$ are contained in $\ssupp(\psi X^{-1}g)$, so we do not lose singularities of $f$ by multiplying by $\psi$.

It is well known that $X^{-1} : H_{comp}^{\alpha} \to H^{\alpha - 1/2}_{loc}$ is a continuous map\footnote{This can be argued from the facts that $X^* X$ is a pseudodifferential operator of order $-1$, and $X^{-1}$ can be written as composition of $X^*$ and a pseudodifferential operator of order $1$.}. Thus, if we apply $X^{-1}$ microlocally to singularities in the data created by the non-linearities, we see that the corresponding singularities in the reconstruction are in $H^{\alpha}$ for $\alpha < 3/2$, and this follows from Theorem \ref{double} and the following discussions. The singularities in the reconstruction which correspond to $\partial \Omega$ are stronger and contained in $H^{\alpha}$ for $\alpha < 1/2$. More precisely, if we let $f_r = \psi X^{-1}g$ and $\psi'$ be a smooth cutoff centered on $\vx \in \mathcal{L}_a\backslash \partial \Omega$, $\psi' f_r \in H^{\alpha}(\mathbb{R}^n)$ for $\alpha < 3/2$. In contrast, if $\psi'$ is centered on $\partial \Omega$, the Sobolev order is reduced by 1. This leads to the recovery of $\partial \Omega$.
\begin{corollary}\label{corr_bound}
Let $\psi$ be a smooth cutoff on $\Omega_c$ as described above and $g = \mathcal{D}_{n-2}(f,g)$. Then 
\begin{equation}
\overline{\ssupp^{1/2}(\psi X^{-1}g)} = \partial \Omega.
\end{equation}
\end{corollary}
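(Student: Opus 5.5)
We prove the two inclusions $\ssupp^{1/2}(\psi X^{-1}g)\subset \partial\Omega$ and $\partial\Omega\subset \overline{\ssupp^{1/2}(\psi X^{-1}g)}$. The tool is the Sobolev bookkeeping set up in Sections \ref{sec:bdet} and \ref{sing_recon}. We treat $X^{-1}$ microlocally as the composition of $X^*$ with a pseudodifferential operator of order $1$. Then a data singularity at $(\vy_0,\eta)$ that is microlocally in $H^{\alpha}$ is mapped to image points $\rho(C_X^t(\vy_0,\eta))$ with regularity $H^{\alpha-1/2}$. Moreover, $X^{-1}$ creates no singularities beyond those transported by $C_X^t$, because $X$ satisfies the Bolker condition.

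For the first inclusion, we already know $\ssupp(\psi X^{-1}g)\subset \partial\Omega\cup\mathcal{L}_a$. Take $\vx\in\mathcal{L}_a\setminus\partial\Omega$ and $\xi\neq 0$. Every data covector $(\vy,\eta)$ with $\rho(C_X^t(\vy,\eta))=\vx$ falls into one of two types.
\begin{itemize}
\item It may correspond to a double tangent ray with $\eta\notin \text{span}(\eta_1)\cup\text{span}(\eta_2)$. Theorem \ref{double} places $g$ in $I^{-\frac{n+1}{2}-1}$ there, hence in $H^{\alpha}$ for $\alpha<2$. The image is then $H^{\alpha}$ for $\alpha<3/2$, in particular $H^{1/2}$.
\item It may be a covector where $g$ is smooth, contributing nothing.
\end{itemize}
The remaining contributions come from the conormal parts $N^*\Gamma_i$ of Case 1 and Case 2. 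These map back to $N^*\Xi$, so away from $\Xi$ they give nothing. Near $\Sigma$ they are killed by $\psi$, which vanishes on $\text{supp}(\mu)$. Lines tangent at non-strictly-convex points form a lower-dimensional set. By Assumption \ref{geo_ass}(\ref{no_nonconvex}) they are not double tangents, so they only produce singularities lying over $\Xi$ itself. Hence $(\vx,\xi)\notin \WF^{1/2}$ for $\vx\notin\partial\Omega$.

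For the reverse inclusion, it suffices to show $\vx_0\in \ssupp^{1/2}(\psi X^{-1}g)$ for every strictly convex $\vx_0\in\partial\Omega$. Such points are dense in $\partial\Omega$ by Assumption \ref{geo_ass}(\ref{conv}), and taking the closure finishes the argument. Pick the normal covector $\xi_0$ and the tangent line $\vy_0=C_X(\vx_0,\xi_0)$. Generically this line is singly tangent to $\Xi$ (Case 1). By \eqref{prod_data_1}, the function $e^{-X_{w_1}\mu}$ is smooth and positive near $\vy_0$, since the line does not touch $\Sigma$. Also $X_{w_2}f=h_0+h_1 s_+^{1/2}+\dots$ with $h_1\neq0$, because $w_2>0$ on $B$ and $I_0\neq 0$. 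So $g$ is a conormal distribution of exact order $-\frac{n+1}{2}$ on $N^*\Gamma_f$ with nonvanishing principal symbol. It is therefore not in $H^{1}$ microlocally at $(\vy_0,\eta_0)$. Then $X^{-1}g$ near $(\vx_0,\xi_0)$ is an elliptic FIO image: $X^{-1}$ is elliptic on the range near $N^*\partial\Omega$, and $\psi=1$ near $\Omega_c$. Ellipticity yields exactly the jump singularity of $\chi_\Omega$, which is not in $H^{1/2}$.

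If $\vy_0$ happens to be a double tangent (Case 2), we still get the lower bound. By Lemma \ref{lem_green} and Definition \ref{pair_lag}, microlocally on $N^*\Gamma_f\setminus N^*(\Gamma_1\cap\Gamma_2)$ the product equals $u_1$ times a smooth nonzero factor modulo a smoother term. Double tangents are nongeneric along $\partial\Omega$, so density again suffices.

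The main obstacle is this lower bound: ensuring that $X^{-1}$ does not cancel the $H^{1/2}$-blowup. It also requires $\ssupp^{1/2}$ to be nonempty exactly on $\partial\Omega$ rather than merely contained in it. We handle this through ellipticity of the principal symbol computed from the square-root term, with the nonvanishing coefficient $h_1$ carried through.
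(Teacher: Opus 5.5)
Your proposal is correct and follows essentially the paper's own argument, which is the discussion in Section~\ref{sing_recon} preceding the corollary. Double-tangent artifacts are microlocally in $H^{\alpha}$, $\alpha<2$, in the data by Theorem~\ref{double}, hence in $H^{\alpha}$, $\alpha<3/2$, after applying $X^{-1}$; $\psi$ removes $\Sigma$; and the $\partial\Omega$ singularities at the dense set of strictly convex points sit one Sobolev order lower, so the closure of $\ssupp^{1/2}(\psi X^{-1}g)$ is $\partial\Omega$. Your explicit lower bound (nonvanishing $h_1$ plus ellipticity of $X^{-1}$ near $N^*\partial\Omega$) spells out what the paper only asserts via $w_2>0$ and ``the Sobolev order is reduced by 1''; your Case~2 remark on $N^*\Gamma_f\setminus N^*(\Gamma_1\cap\Gamma_2)$ does not reach the double-tangent covector itself, but your density argument already makes it unnecessary.
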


\section{Simultaneous recovery of $f$ and $\mu$}\label{sec:sim}
In this section, using our already established microlocal theory,  we show, under reasonable assumptions on $\text{supp}(\mu)$ and $\text{supp}(f)$, that $f$ and $\mu$ are uniquely determined by $\mathcal{D}_{n-2}(f,\mu)$.

First, we have the theorem which shows how we can determine $f$ based on the theory of the last section.
\begin{theorem}\label{rec_f}
Let Assumption \ref{geo_ass} hold and let $f = I_0 \chi_\Omega$ where $I_0 > 0$ is a constant intensity and $\Omega \subset B$ is a compact domain with smooth boundary which is strictly convex almost everywhere. Let us further assume that there exists a $(\vv,\omega)$ such that $X_{w_2}f(\vv,\omega) > 0$ and $X_{w_1}\mu(\vv,\omega) = 0$. Then $\mathcal{D}_{n-2}(f,\mu)$ determines $f$ uniquely.
\end{theorem}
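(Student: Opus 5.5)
The plan is to recover $f$ in two stages: first the support $\Omega$ from the boundary recovery of Section \ref{sec:bdet}, then the constant $I_0$ from a single unattenuated data value.

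\textbf{Step 1: recover $\Omega$.} Proposition \ref{prop_hull} shows that the data determine $\Omega_c$. Hence we can build the cutoff $\psi$, equal to $1$ on $\Omega_c$ and vanishing on $\text{supp}(\mu)$. The existence of such a $\psi$ is guaranteed by Assumption \ref{geo_ass}(\ref{crit}), and the choice does not depend on knowing $\mu$ beyond this separation. Corollary \ref{corr_bound} then gives $\partial\Omega = \overline{\ssupp^{1/2}(\psi X^{-1}g)}$ with $g=\mathcal{D}_{n-2}(f,\mu)$, so $\partial\Omega$ is determined by the data.

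Since $\Omega$ is a compact domain with smooth boundary, it is the closure of the union of some of the bounded components of $\mathbb{R}^n\setminus\partial\Omega$. We decide which components belong to $\Omega$ as follows. Every point of $\Omega$ lies in $\Omega_c$. Conversely, a bounded component $U$ of $\mathbb{R}^n\setminus\partial\Omega$ lying in $\Omega_c$ is either inside $\Omega$ or a hole of $\Omega$. To tell these apart, pick $\vx\in U$ and a line through $\vx$. By Assumption \ref{geo_ass}(\ref{all lines}) this line meets $\s$. Counting (with generic transversality) the crossings of $\partial\Omega$ along the ray from $\vx$ to infinity, the parity decides whether $\vx\in\Omega$, because the far end of the ray lies outside $\Omega_c$.

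\textbf{Step 2: recover $I_0$.} Once $\Omega$ is known, $X_{w_2}\chi_\Omega$ is a known function with $X_{w_2}f = I_0 X_{w_2}\chi_\Omega$. By hypothesis there is $(\vv_0,\omega_0)$ with $X_{w_2}f(\vv_0,\omega_0)>0$ and $X_{w_1}\mu(\vv_0,\omega_0)=0$. At that point \eqref{prod_data_1} gives
\[
g(\vv_0,\omega_0)=X_{w_2}f(\vv_0,\omega_0)=I_0\,X_{w_2}\chi_\Omega(\vv_0,\omega_0).
\]

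The difficulty is that we do not know which $(\vv,\omega)$ satisfies $X_{w_1}\mu=0$. I would handle this with a supremum. Because $X_{w_1}\mu\ge 0$ everywhere, the ratio
\[
\frac{g(\vv,\omega)}{X_{w_2}\chi_\Omega(\vv,\omega)} = I_0\,e^{-X_{w_1}\mu(\vv,\omega)}
\]
is at most $I_0$ wherever $X_{w_2}\chi_\Omega>0$, and it equals $I_0$ at $(\vv_0,\omega_0)$. Therefore
\[
I_0=\sup\Big\{\tfrac{g}{X_{w_2}\chi_\Omega}(\vv,\omega) : X_{w_2}\chi_\Omega(\vv,\omega)>0\Big\},
\]
and the right-hand side is computed from the data and $\Omega$ alone. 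Thus $f=I_0\chi_\Omega$ is determined. The weight $w_2$ is known, since it is just $\rho^{n-2}$ on the relevant half-ray.

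\textbf{Main obstacle.} The main obstacle is Step 1. Corollary \ref{corr_bound} only asserts equality of closures, and we must pass rigorously from $\partial\Omega$ to $\Omega$ itself. The components argument above is my first attempt. If the parity argument proves delicate, because rays may be tangent to $\partial\Omega$ or $\Omega$ may have holes, I would instead use the data support: $g(\vv,\omega)>0$ iff the ray meets the interior of $\Omega$. Combined with knowledge of $\partial\Omega$, this pins down $\Omega$, since a hole component is crossed by rays only through $\partial\Omega$ on both sides.
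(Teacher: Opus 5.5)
Your proposal is correct and follows essentially the same route as the paper. You determine $\partial\Omega$ via Proposition \ref{prop_hull} and Corollary \ref{corr_bound}, fill in $\Omega$ by the parity count of crossings along a ray transversal to $\partial\Omega$, and recover $I_0$ as the maximum of $g/X_{w_2}\chi_\Omega$; the paper maximizes over $\{g\neq 0\}$, which is the same set as your $\{X_{w_2}\chi_\Omega>0\}$, and your explicit use of $X_{w_1}\mu\ge 0$ (so the ratio never exceeds $I_0$) supplies a step the paper leaves implicit.
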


\begin{remark}
Given the construction of $w_1$, the final condition of the theorem means there exists a half-ray emanating from $\partial \Omega$ (i.e., the boundary of $f$) which reaches $\s$ without hitting $\mu$ (i.e., without attenuating). While this may seem a specific assumption, it is also required in our upcoming theorems (e.g., see Theorem \ref{rec_mu}), and will be needed throughout this section to recover $\mu$. 
\end{remark}
\begin{proof}
As was proven in the previous section, under Assumption \ref{geo_ass} and Corollary \ref{corr_bound}, the boundary of $\Omega$ ($\partial \Omega$) can be determined from $g = \mathcal{D}_{n-2}(f,\mu)$ by applying FBP type reconstruction and extracting the stronger singularities from the resulting image. The stronger singularities contained in the convex hull $\Omega_c$, which can be determined from the data data as shown in Proposition \ref{prop_hull}, must correspond to the boundary of $\Omega$. We now argue as in \cite{Webber2026Microlocal} that $\partial \Omega$ determines $\Omega$. Indeed, take any $\vx \in \mathbb{R}^n\setminus \partial \Omega$ and let $L_\vx$ be an infinite ray with $\vx$ as the origin which is never tangent to $\partial \Omega$. Let $k$ be the number of points in $\partial \Omega \cap L_\vx$.  Since $\Omega$ is compact, points sufficiently far along this ray will be outside of $\Omega$ and from this we can conclude that if $k$ is odd $x \in \Omega$ and if $k$ is even then $\vx \notin \Omega$. Thus, $\Omega$ is determined.

Now, by \eqref{prod_data_2}, we have
\begin{equation}
\mathcal{D}_{n-2}(f,\mu)(\vv,\omega) = e^{ -X_{w_1}\mu(\vv,\omega) }  X_{w_2}f(\vv,\omega),
\end{equation}
and, since $X_{w_2}$ is linear, $X_{w_2}f = I_0X_{w_2}\chi_\Omega$. Rearranging the above gives
\begin{equation}
e^{ -X_{w_1}\mu(\vv,\omega) }I_0 = \frac{ \mathcal{D}_{n-2}(f,\mu)(\vv,\omega) }{X_{w_2}\chi_\Omega(\vv,\omega) },
\end{equation}
where the right-hand side is known. By assumption that there exists a $(\vv,\omega)$ with $X_{w_2}\chi_\Omega(\vv,\omega) > 0$ and $X_{w_1}\mu(\vv,\omega) = 0$, we can recover $I_0$ via
$$I_0 = \text{max}_{(\vv,\omega) \in \mathcal{Y}} \frac{ \mathcal{D}_{n-2}(f,\mu)(\vv,\omega) }{X_{w_2}\chi_\Omega(\vv,\omega) },$$
where $\mathcal{Y} = \{(\vv,\omega) : \mathcal{D}_{n-2}(f,\mu)(\vv,\omega) \neq 0\}$. This completes the proof.
\end{proof}

We now explain how we can determine $\mu$ uniquely in certain situations. Taking the negative log of \eqref{prod_data_2} yields
\begin{equation}\label{recov_mu}
-\log \mathcal{D}_{n-2}(f,\mu)(\vv,\omega) = X_{w_1}\mu(\vv,\omega) - \log X_{w_2}f(\vv,\omega),
\end{equation}
and thus, noting that $f$ is determined by the above theorem, we can determine $X_{w_1}\mu(\vv,\omega)$ for any $(\vv,\omega)$ such that $L_{\vv,\omega}$ intersects $\Omega$, i.e., when $X_{w_2}f(\vv,\omega) \neq 0$. For the next argument, we remind the reader that construction of the weight $w_1$ is given after Assumption \ref{geo_ass}. As $L_{\vv,\omega}$ emanates from $\s$ and passes through $\Omega$, $w_1 = 0$ on the half ray exiting $\Omega$ along $L_{\vv,\omega}$ in direction $\omega$. Given this construction and that $\text{supp}(\mu) \cap \Omega_c = \emptyset$, it is clear that $ X_{w_1}\mu$ determines $D_0\mu(\vv,\omega)$ (recall \eqref{eq:WeightedRayDef}) for $\vv$ in the interior of $\Omega$ and for $\omega$ such that $L_{\vv,\omega}$ intersects $\s$. That is, $\mathcal{D}(f,\mu)$ determines all divergent beam integrals of $\mu$ emanating from $\Omega$ and passing through $\s$. Intuitively, this means that, once we have determined the source location and intensity, we can use rays emitted from the source to recover the attenuation of the surrounding material. With this in mind, we now establish conditions so that $D_0\mu$ determines $\mu$. 

First, we expand upon a uniqueness theorem previously proven by Natterer \cite[Theorem 3.3, chapter 2]{natterer} who considered the nearly equivalent result for the case when $\mu \in C_c^\infty(\mathbb{R}^n)$. We are only really interested in the case when $\mu$ is piecewise constant, but establish the result in the more general situation $\mu \in \mathcal{E}'(\mathbb{R}^n)$, the space of compactly supported distributions.

Before presenting our uniqueness theorem, we need to consider how $D_k$ is extended to $\mu\in \mathcal{E}'(\mathbb{R}^n)$. 
For this theorem, we actually consider a restricted version of $D_k$ which we will denote $\widetilde{D}_k$. 
Suppose $A$ is a smooth curve in $\mathbb{R}^n$ parametrized by $I \ni s \mapsto \va(s) \in A$, where $I$ is an open interval, and $\Lambda \subset S^1$ is an open set. Also set
\[
\Lambda_c = \left \{ \vx \in \mathbb{R}^n \ : \ \frac{\vx}{|\vx|} \in \Lambda \right \}
\]
Then for $\mu \in C_c^\infty(\mathbb{R}^n)$ and $k \in \mathbb{Z}_{\geq 0}$ we define
\begin{equation}
    \widetilde{D}_k \mu(s,\vx) = \int_0^\infty t^{k} \mu(\va(s) + t \vx) \d t
\end{equation}
which continuously maps $C_c^\infty(\mathbb{R}^n)$ to $C^\infty(I \times \Lambda_c)$. Note that
\[
\widetilde{D}_k \mu(s,\vx) = D_k\mu(\va(s),\vx/|\vx|) |\vx|^{-(k+1)}
\]
and so $\widetilde{D}_k \mu$ contains the same information as $D_k \mu$ with vertices restricted to $A$ and directions restricted to $\Lambda$. 

If we take a test function $h \in C_c^\infty(I\times \Lambda_c)$, then
\begin{equation}\label{eq:Dkdef2}
\begin{split}
\langle \widetilde{D}_k \mu, h \rangle_{L^2(I \times \Lambda_c)} & = \int_{\Lambda_c} \int_I \int_0^\infty \rho^{k} \mu(\va(s) + t \vx) h(s,\vx) \ \mathrm{d}s \mathrm{d}t \mathrm{d}\vx \\
& = \int_{\mathbb{R}^n} \mu(\vy)\left ( \int_I \int_0^\infty t^{k-n} h\left ( s, \frac{\vy-\va(s)}{t} \right )\ \mathrm{d}t \mathrm{d}s\right ) \mathrm{d}\vy
\end{split}
\end{equation}
where we have changed variables in the second step and applied Fubini's theorem. From this calculation we see that formal adjoint $\widetilde{D}_k^*$ is given by
\[
\widetilde{D}_k^* h(y) =  \int_I \int_0^\infty t^{k-n} h\left ( s, \frac{\vy-\va(s)}{t} \right )\ \mathrm{d}t \mathrm{d}s
\]
and it is clear $\widetilde{D}_k^* : C_c^\infty(I \times \Lambda_c) \rightarrow C^\infty(\mathbb{R}^n)$ continuously. Based on this, we can extend $\widetilde{D}_k\mu$ to act on $\mu \in \mathcal{E}'(\mathbb{R}^n)$ by the formula
\[
\langle \widetilde{D}_k \mu, h \rangle = \langle \mu, \widetilde{D}_k^* h \rangle.
\]
The general theory says $\widetilde{D}_k :  \mathcal{E}'(\mathbb{R}^n) \rightarrow \mathcal{D}'(I \times \Lambda_c)$ continuously. Note that $\mathcal{D}'$ here is the space of distributions, not to be confused with the non-linear operator $\mathcal{D}$.

\begin{theorem}
\label{natt}
Let $\Lambda$ be an open set on $S^{n-1}$, and let $A$ be a continuously
differentiable curve in $\mathbb{R}^n$ parametrized by $I \ni s \mapsto \va(s) \in A$. Suppose that the operator $\widetilde{D}_0: \mathcal{E}'(\mathbb{R}^n)\rightarrow \mathcal{D}'(I \times \Lambda_c)$ is defined as above using $A$ and $\Lambda$. Let $U \subset \mathbb{R}^n$ be bounded, open and disjoint from $A$. Assume that for each $\omega \in \Lambda$
there is an open interval $I_\omega$ such that for all $s \in I_w$, the ray $\{\va(s) + t\omega \ :\ t\geq 0\}$ misses $U$. If $\mu \in \mathcal{E}'(U)$ and
$\widetilde{D}_0\mu=0$, then $\mu = 0$ when restricted to the ``measured region"
$\{\vv+t\omega : \vv\in A, \omega \in \Lambda\}$.
\end{theorem}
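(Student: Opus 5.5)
We follow Natterer's argument for \cite[Theorem 3.3, chapter 2]{natterer}, adapted to distributions. The key identity is that differentiating the divergent beam transform along the source curve produces a transform of one higher weight in $t$; iterating this generates all moments in $t$. Combined with the vanishing near the start of each $I_\omega$, these moments force the integrals of $\mu$ along the relevant lines to vanish.

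\textbf{Step 1: the derivative identity.} We first work with $\mu\in C_c^\infty(U)$ and pass to $\mathcal{E}'(U)$ at the end by duality. Since $\widetilde D_0\mu(s,\vx)$ is homogeneous of degree $-1$ in $\vx$, Euler's relation together with the chain rule gives
\[
\frac{\partial}{\partial s}\widetilde D_k\mu(s,\vx)=\int_0^\infty t^k\,\va'(s)\cdot\nabla\mu(\va(s)+t\vx)\,\mathrm{d}t .
\]
The $x$-gradient is $\nabla_\vx \widetilde D_{k}\mu=\widetilde D_{k+1}(\nabla\mu)$. Hence
\[
\partial_s \widetilde D_k\mu=\va'(s)\cdot\nabla_\vx\widetilde D_{k-1}\mu \quad (k\ge1),
\]
and for $k=0$ we get $\partial_s\widetilde D_0\mu=\va'(s)\cdot\widetilde D_0(\nabla\mu)$. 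The distributional version holds because both sides are defined by the transpose formula \eqref{eq:Dkdef2}, and $\widetilde D_k^*$ commutes appropriately with differentiation. I would state this as a lemma, verified on test functions via \eqref{eq:Dkdef2}.

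\textbf{Step 2: vanishing of all weighted transforms.} The goal is to show that $\widetilde D_0\mu=0$ on $I\times\Lambda_c$ implies $\widetilde D_k\mu=0$ for every $k$. Differentiating the relation $\widetilde D_k\mu(s,\vx)$ in the direction $\vx$ (scaling) produces higher $t$-weights:
\[
\vx\cdot\nabla_\vx\widetilde D_k\mu=-(k+1)\widetilde D_k\mu,
\]
so scaling alone is insufficient. Instead I would use the transversal derivative: $\partial_s\widetilde D_{k+1}\mu=\va'(s)\cdot\nabla_\vx \widetilde D_k\mu$, which vanishes if $\widetilde D_k\mu\equiv0$. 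Thus $\widetilde D_{k+1}\mu$ is constant in $s$ on each connected interval. By hypothesis, for $s\in I_\omega$ the ray from $\va(s)$ misses $U\supset\operatorname{supp}\mu$, so $\widetilde D_{k+1}\mu(s,\vx)=0$ there for $\vx/|\vx|=\omega$. Constancy in $s$ (with $I$ connected) then gives $\widetilde D_{k+1}\mu\equiv0$ near $\omega$. Induction yields $\widetilde D_k\mu=0$ for all $k\ge0$.

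\textbf{Step 3: conclusion.} For fixed $(s,\omega)$, the function $t\mapsto\mu(\va(s)+t\omega)$ on $t\ge0$ has compact support, since $\va(s)\notin \overline U$ because $A$ is disjoint from $U$. All of its moments vanish, so by Weierstrass density it vanishes; for distributions, one tests against $\widetilde D_k^*$ of product test functions and uses density of polynomials in $t$. Hence $\mu=0$ on the measured region. The main obstacle is Step 2 in the distributional setting: one must justify the derivative identity for $\mathcal{E}'$ and the ``constant in $s$'' argument when only $\va\in C^1$. I would first mollify: set $\mu_\epsilon=\mu*\phi_\epsilon$. The condition $\widetilde D_0\mu_\epsilon=0$ is not directly inherited, but it is inherited under translations small enough to keep the support in $U$ and the sets $I_\omega$ valid, after shrinking. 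One then proves the result for smooth $\mu_\epsilon$ and lets $\epsilon\to0$.
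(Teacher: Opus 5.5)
For smooth $\mu$ your argument is essentially the paper's. It uses the identity $\partial_s\widetilde D_k\mu=\va'(s)\cdot\nabla_\vx\widetilde D_{k-1}\mu$, then induction on $k$: $\widetilde D_k\mu$ is constant in $s$ on the interval $I$ and trivially zero for $s\in I_\omega$. It finishes with a moment argument along rays.

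The gap is how you pass to $\mu\in\mathcal E'(U)$. That is the actual content of the statement, and it is applied to $\mu\in L^\infty$ in Theorem~\ref{rec_mu}. Mollification does not work, because translating $\mu$ is the same as translating the vertex curve:
\[
\widetilde D_0\bigl(\mu(\cdot-\vy)\bigr)(s,\vx)=\int_0^\infty\mu(\va(s)-\vy+t\vx)\,\dd t
\]
is the divergent beam transform of $\mu$ with vertices on $A-\vy$. Nothing is assumed about vertices off $A$. Hence $\widetilde D_0(\mu*\phi_\epsilon)=\int\phi_\epsilon(\vy)\,\widetilde D_0(\mu(\cdot-\vy))\,\dd\vy$ has no reason to vanish, however small $\epsilon$ is and however much you shrink $U$ and the $I_\omega$. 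The hypothesis $\widetilde D_0\mu=0$ is simply not inherited by $\mu*\phi_\epsilon$, so your reduction to the smooth case fails.

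The repair is to stay on the dual side throughout; your Step 1 already sets this up, and it is what the paper does.

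\textbf{Derivative step.} For $h\in C_c^\infty(I\times\Lambda_c)$, the identity $\widetilde D_k^*[\partial_s h]=\widetilde D_{k-1}^*[(\va')^T\nabla_\vx h]$ gives $\partial_s\widetilde D_k\mu=0$ in $\mathcal D'(I\times\Lambda_c)$ whenever $\widetilde D_{k-1}\mu=0$.

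\textbf{Replacing ``constant in $s$''.}
\begin{enumerate}
\item Cover $\Lambda$ by open sets $\Lambda_i$ with intervals $I_i$ such that all rays from $\va(s)$, $s\in I_i$, with directions in $\Lambda_i$ miss $\operatorname{supp}\mu$. You need an open set of $(s,\omega)$ here; it follows from compactness of $\operatorname{supp}\mu$, not from the pointwise hypothesis on $I_\omega$ alone.
\item Localize $h$ by partitions of unity to $I_{s_j,\epsilon}\times\Lambda_{i,c}$.
\item By the fundamental theorem of calculus, write $h$ as its $s$-translate into $I_i$ plus an integral of $\partial_s$ of the intermediate translates.
\item The first pairing vanishes by \eqref{eq:Dkdef2} and support considerations; the second vanishes because $\partial_s\widetilde D_k\mu=0$.
\end{enumerate}
Equivalently, a distribution on $I\times\Lambda_c$ with vanishing $s$-derivative is $1\otimes v$ with $v\in\mathcal D'(\Lambda_c)$, and vanishing on each $I_i\times\Lambda_{i,c}$ forces $v=0$.

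\textbf{Step 3.} This must likewise be done by pairing $\mu$ with linear combinations of $\widetilde D_k^*h$, as you indicate. The compactness you need there comes from $\operatorname{supp}\mu$ being a compact subset of $U$, hence not containing $\va(s)$. It does not come from $\va(s)\notin\overline U$, which does not follow from $A\cap U=\emptyset$.
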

\begin{proof}

Suppose $\mu$ satisfies the hypotheses of the theorem. We will prove by induction that $\widetilde{D}_k \mu = 0$ for all $k$, which we already know is true for $k= 0$. So, suppose $\widetilde{D}_{k-1} \mu = 0$ for some $k \geq 1$.

For $h(\cdot_s,\cdot_\vx) \in C_c^\infty(I \times S_c)$, using the identity
\[
\partial_s \left ( h \left ( s , \frac{\vy - \va(s)}{t} \right ) \right ) = \partial_{\widetilde{s}}|_{\widetilde{s} = s} h \left ( \widetilde{s} , \frac{\vy - \va(s)}{t} \right ) - \frac{\va'(s)^T}{t} \nabla_\vx h \left ( s , \frac{\vy - \va(s)}{t} \right )
\]
and a variation on integration-by-parts, we can calculate
\[
\begin{split}
\widetilde{D}_k^* [\partial_sh](\vy) & = \int_I \int_0^\infty t^{k-n} \partial_{\tilde{s}}|_{\tilde{s} = s} h\left ( \tilde{s}, \frac{\vy-\va(s)}{t} \right )\ \mathrm{d}t \mathrm{d}s\\
& =  \int_I \int_0^\infty t^{k-n-1} \va'(s)^T \nabla_\vx h\left ( s, \frac{\vy-\va(s)}{t} \right )\ \mathrm{d}t \mathrm{d}s \\
& = \widetilde{D}_{k-1}^* [(\va')^T \nabla_\vx h](\vy).
\end{split}
\]
Therefore,
\begin{equation}\label{eq:dDkfzero}
\langle \partial_s \widetilde{D}_k \mu, h \rangle = -\langle \mu,\widetilde{D}^*_k \partial_s h \rangle = -\langle \mu,\widetilde{D}^*_{k-1} (\va')^T\nabla_x h \rangle = -\langle \widetilde{D}_{k-1} \mu, (\va')^T\nabla_x h \rangle = 0.
\end{equation}
Now, by hypotheses we can find a covering of $\Lambda$ by open sets $\Lambda_i$ such that for each $i$ there is an open interval $I_i \subset I$ such that for all $\omega \in \Lambda_i$ and $s \in I_i$ the ray $\{\va(s) + t\omega \ :\ t\geq 0\}$ misses $U$. Let $\{\varphi_i\}$ be a partition of unity subordinate to the covering of $\Lambda$, and for $h \in C_c^\infty(I \times S_c)$ we have
\[
\langle \widetilde{D}_k\mu,h \rangle = \sum_{i} \langle \widetilde{D}_k\mu, \varphi_ih \rangle.
\]
We will prove each of the terms in the sum above is equal to zero. Now let $I_{s_0,\epsilon} = [s_0-\epsilon,s_0+\epsilon] \subset I_i$ and $\{s_j\}_{j=0}^N \subset I$ be a finite collection of points such that the intervals $I_{s_j,\epsilon} = [s_j - \epsilon,s_j+\epsilon]$ cover $I$. Let $\{\phi_j\}_{j=0}^N$ be a subordinate partition of unity on $I$. Then it is also sufficient to prove the terms
\[
\langle \widetilde{D}_k f,\varphi_i \phi_j h \rangle
\]
are all zero. For convenience, let us write \[
h_{ij} = \varphi_i \phi_j h
\]
and note that the support of $h_{ij}$ is contained in $I_{s_j,\epsilon} \times \Lambda_{i,c}$. Using the fundamental theorem of calculus, we have
\[
\begin{split}
\langle \widetilde{D}_k \mu, h_{ij} \rangle & = \left \langle \widetilde{D}_k \mu, -\int_0^{s_0-s_j} \partial_s h_{ij}(s-c,\vx) \ \mathrm{d}c + h_{ij}(s+s_j - s_0,\vx) \right \rangle\\
& = \int_{0}^{s_0-s_j} \langle \partial_s \widetilde{D}_k \mu, h_{ij}(s-c,\vx)\rangle \ \dd c + \langle \widetilde{D}_k \mu, h_{ij} (s+s_j-s_0,\vx) \rangle.
\end{split}
\]
The first term above is zero because of \eqref{eq:dDkfzero} while we can see the second is zero by considering \eqref{eq:Dkdef2} and noting that the support of $(s,\vx) \mapsto h_{ij}(s+s_j-s_0,\vx)$ is contained in $I_{s_0,\epsilon}\times \Lambda_{i,c} \subset I_i \times \Lambda_{i,c}$. This proves by induction that $\widetilde{D}_k \mu = 0$ for all $k \geq 0$.

By taking a partition of unity we can split up the measured region and so it is sufficient to prove
\[
\langle \mu, g \rangle = 0
\]
whenever the support of $g$ is contained in a small sliver in the measured region given by $\{ \va(s_0) + t \omega \ : \ \omega \in \Lambda_\epsilon\}$ where $\Lambda_\epsilon \subset \Lambda$ is sufficiently small. Approximating $g$ by polynomials in the radial variable centered at $a(s_0)$, which is possible by the Stone-Weierstrass theorem since the support of $\mu$ is compact, we can use the fact $\widetilde{D}_k 
\mu = 0$ for all $k$ to show that $\mu = 0$ when restricted to the measured region.
\end{proof}


We now prove injectivity results for $D_0$ in the context of our application. We remark that the theorems in this section can be extended to apply to $D_k$ for $k > 0$, although this is not necessary for the application we consider.

\begin{theorem}\label{rec_mu}
Let $B = \{|\vx| <1\}$ be the unit ball in $\mathbb{R}^2$ and suppose $\Omega \Subset B$ has smooth boundary denoted by $A = \partial \Omega$. Let there exist $\vv_0 \in A$ and some open $\Lambda \subset S^1$ such that $\{\vv_0 + t\omega : t\geq 0\}$ misses $U = \text{supp}(\mu)$ for any $\omega \in \Lambda$. Let us further assume that $\mu \in L^\infty(\mathbb{R}^2)$ with $\text{supp}(\mu) \subset B$ and that $\mu = 0$ when restricted to the interior of $A$. Then if $D_0\mu(\vv,\omega) = 0$ for any $\vv\in A$ and $\omega \in S^1$, $\mu = 0$.
\end{theorem}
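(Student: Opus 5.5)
The plan is to reduce Theorem \ref{rec_mu} to repeated applications of Theorem \ref{natt}, growing the set of "good" directions by a continuation argument on $S^1$.

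\textbf{Setup.} Since $A=\partial\Omega$ is a smooth closed curve, I will work with finitely many open arcs of $A$, each parametrized by $I\ni s\mapsto \va(s)$. For the open set in Theorem \ref{natt} I will take $U_\epsilon=\{\vx\in B : \text{dist}(\vx,\Omega)>\epsilon\}$ for small $\epsilon>0$. This is bounded, open and disjoint from $A$.

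The main technical point at this stage is that $\text{supp}(\mu)$ may touch $A$, so $\mu\notin\mathcal{E}'(U_\epsilon)$ in general. I will handle this by working with $\mu_\epsilon=\mu\chi_{U_\epsilon}$. Since $\mu\in L^\infty$ and $\mu=0$ in the interior of $A$, we have $\mu-\mu_\epsilon\to 0$ in $L^1$. For rays from $A$ that meet $\Omega$ only at their vertex, $D_0(\mu-\mu_\epsilon)$ is supported in a thin collar near $A$. I expect that one can instead apply Theorem \ref{natt} to a curve $A'$ slightly outside $\Omega$ and then pass to the limit; alternatively, one can check that the proof of Theorem \ref{natt} only uses that $\mu$ has compact support and the vanishing of $\widetilde{D}_0\mu$. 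I will take the second route: rerun the proof of Theorem \ref{natt} with $U=B\setminus\overline{\Omega}$ and note that the integration by parts in \eqref{eq:dDkfzero} needs only $\mu\in L^\infty$ with compact support in $\overline{B}$.

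\textbf{Step 1 (local start).} By hypothesis the rays $\{\vv_0+t\omega\}$, $\omega\in\Lambda$, miss the compact set $\text{supp}(\mu)$. Hence, after shrinking $\Lambda$, there is an interval $I$ around the parameter of $\vv_0$ such that every ray $\{\va(s)+t\omega : t\ge 0\}$ with $s\in I$ and $\omega\in\Lambda$ misses $\text{supp}(\mu)$. This is simply because distance from a compact set is continuous. Thus the hypothesis of Theorem \ref{natt} holds with $I_\omega=I$. We also know $D_0\mu=0$ for all vertices in $A$ and all $\omega\in S^1$. Applying Theorem \ref{natt} on each arc of $A$ with this $\Lambda$ shows that $\mu=0$ on the union of all rays from $A$ in directions in $\Lambda$.

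\textbf{Step 2 (continuation).} Let $G\subset S^1$ be the set of directions $\omega$ for which $\mu$ vanishes on $\{\vv+t\omega : \vv\in A,\ t\ge 0\}$. By Step 1, $G$ has nonempty interior.

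Now take $\omega_1\in\partial(\text{int}\,G)$. Near $\vv_0$, the rays in directions slightly past $\omega_1$ differ from rays in $\text{int}\,G$ only in a thin wedge. The key geometric claim is that, after removing the region already known to be zero, the \emph{new} support $\text{supp}(\mu)\setminus\{\text{rays in }G\}$ is still missed by rays from some subinterval of $A$ in an open neighbourhood $\Lambda_1$ of $\omega_1$. One can then reapply Step 1 with $\Lambda_1$ to enlarge $G$, so $\text{int}\,G$ is open and closed in $S^1$, and connectedness gives $G=S^1$. I expect this continuation step to be the main obstacle. It requires a careful compactness argument to ensure the vertex interval does not shrink to nothing as $\omega_1$ is approached. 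The first thing I would try is to use the support point of $\Omega$ in the direction $\omega_1$, so that near it the ray leaves $\Omega$ immediately, and to use the vanishing of $\mu$ on the previously covered cone.

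\textbf{Step 3 (conclusion).} Every $\vx\in B\setminus\Omega$ lies on some ray $\{\vv+t\omega : t\ge 0\}$ with $\vv\in A$: take $\vv$ a point of $A$ nearest to $\vx$ and $\omega=(\vx-\vv)/|\vx-\vv|$. So the measured region for $G=S^1$ covers $B\setminus\Omega$. Combined with $\mu=0$ inside $A$, this gives $\mu=0$ almost everywhere, i.e.\ $\mu=0$.
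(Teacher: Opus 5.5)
Your proposal has a genuine gap at Step 2, and that step carries essentially all of the geometry in the theorem. You assert the ``key geometric claim'': for $\omega_1\in\partial(\mathrm{int}\,G)$ there exist an open $\Lambda_1\ni\omega_1$ and a vertex interval on $A$ whose rays avoid $\mathrm{supp}(\mu)$. You then flag it as the main obstacle without proving it. So closedness of $\mathrm{int}\,G$, and with it the connectedness argument, is unsupported.

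Your heuristic that ``near $\vv_0$'' the new rays differ only by a thin wedge also points the wrong way. Let $J$ be the arc of $\mathrm{int}\,G$ ending at $\omega_1$, let $W=\bigcup_{\omega\in J}(\overline\Omega+[0,\infty)\omega)$ be the region where you know $\mu=0$, and let $e\perp\omega_1$ point away from $J$. If $J$ is shorter than a half-circle and $\vv_0$ maximizes $\vx\cdot e$ on $\overline\Omega$, then $\vq\cdot e\le\vv_0\cdot e$ on $W$. Hence every ray from $\vv_0$ rotated past $\omega_1$ leaves $W$ at once, and the vertex must be chosen depending on $\omega_1$.

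The paper supplies exactly such a construction, in two moves.
\begin{itemize}
\item It replaces $A$ by a circle $C\subset\Omega$ internally tangent to $A$ at $\vv_0$. Because $\mu=0$ in $\Omega$, each ray from $C$ has the same integral as the ray from its exit point on $A$, so $D_0\mu=0$ on $C\times S^1$. Also $C\cap\mathrm{supp}(\mu)=\emptyset$, so Theorem~\ref{natt} applies as stated. This also removes your worry about $\mathrm{supp}(\mu)$ touching $A$.
\item Once $\mu=0$ on the half-strip $\Delta_{\omega_0}$ swept by $C$ in direction $\omega_0$, take the tangency point $\vy_2$ on one edge, aimed at the point $\vz_1$ where the opposite edge leaves $B$. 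From $\vy_2$ there is an open set of directions around $\omega_1$ whose rays stay in the strip until they exit $B$, so they miss the compact set $\mathrm{supp}(\mu)\subset B$. Each application of Theorem~\ref{natt} therefore rotates the good directions by an angle bounded below, and finitely many steps exhaust $S^1$.
\end{itemize}

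Your own suggestion, the support point of $\Omega$ in direction $\omega_1$, does close the gap, but the argument has to be written out.
\begin{itemize}
\item Let $\vp$ maximize $\vx\cdot\omega_1$ on $\overline\Omega$. Since $A$ is smooth with outward normal $\omega_1$ at $\vp$, near $\vp$ the set $\Omega$ lies below a graph over the line $\vp+\mathbb{R}e$.
\item Hence there is $\delta>0$ such that every $\vq$ with $|(\vq-\vp)\cdot e|<\delta$ and $\vq\cdot\omega_1>\vp\cdot\omega_1$ equals $\vx+s\omega_1$ with $\vx\in\mathrm{int}\,\Omega$ and $s>0$. Then a neighbourhood of $\vq$ lies in $\overline\Omega+[0,\infty)\omega$ for a fixed $\omega\in J$ close to $\omega_1$, so $\mu=0$ near $\vq$.
\item The same graph picture shows that a neighbourhood of $\vp$ lies in such a shadow, so $\vp\notin\mathrm{supp}(\mu)$.
\item Rays from $\vp$ leave $B$ within length $2$. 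So rays from $\vp$ with direction within angle $\arcsin(\delta/2)$ of $\omega_1$ avoid the compact set $\mathrm{supp}(\mu)$.
\item Compactness then gives the vertex interval and $\Lambda_1$, and Theorem~\ref{natt} extends $J$ past $\omega_1$.
\end{itemize}
This uses the boundedness of $B$ and $\mathrm{supp}(\mu)\Subset B$ exactly where the paper does. It trades the paper's fixed-increment rotation for an open--closed argument.

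Two smaller repairs are also needed.
\begin{itemize}
\item Theorem~\ref{natt} needs the good vertex interval inside the same parameter interval on which $\widetilde{D}_0\mu=0$. Rather than applying it ``on each arc'', you should parametrize a connected piece of $A$ through $\vv_0$ by a single interval.
\item With $U=B\setminus\overline\Omega$, the literal hypothesis ``rays miss $U$'' never holds, since every ray from $A$ enters $B\setminus\overline\Omega$. What you actually use is ``rays miss $\mathrm{supp}(\mu)$''. You also need that $\langle\mu,\widetilde{D}_k^*h\rangle$ still makes sense for $\mu\in L^\infty$ whose support meets the vertex curve, where $\widetilde{D}_k^*h$ is singular (at worst logarithmically when $n=2$).
\end{itemize}
The interior circle $C$ avoids both issues.
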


\begin{proof}
Let $D_0\mu(\vv,\omega) = 0$ for any $\vv\in A$ and $\omega \in S^1$ and choose a circle $C \subset \Omega$ such that $\vv_0 \in C$. Then, by assumption that $\mu = 0$ on $\Omega$, $D_0\mu(\vv,\omega) = 0$ for any $\vv\in C$ and $\omega \in S^1$. For any $\vv$, define the cone $\{\vv + \Lambda\} = \{\vv + t\omega : \omega \in \Lambda, t\geq 0\}$. Then, using Theorem \ref{natt}, since $\vv_0 \in C$ and by assumption that $\{\vv_0 + t\omega : t\geq 0\}$ misses $\text{supp}(\mu)$ for any $\omega \in \Lambda$, $\mu = 0$ on 
$$\Delta = \cup_{\vv \in C} \{\vv + \Lambda\}.$$
Therefore, $\mu = 0$ on
$$\Delta_{\omega_0} = \cup_{\vv \in C} \{\vv + t\omega_0: t\geq 0\},$$
for any $\omega_0 \in \Lambda$ and $\text{supp}(\mu) \cap \Delta_{\omega_0} = \emptyset$.

Let $L_1 = \{\vx \cdot \omega^{\perp}_0 = s_1\}$ and $L_2 = \{\vx \cdot \omega^{\perp}_0 = s_2\}$ be the two lines tangent to $C$ which are parallel to $\omega_0$, and let $\vy_1$ and $\vy_2$ be their respective points of intersection with $C$. Let the half rays $\{\vy_1 + t\omega_0\}$ and $\{\vy_2 + t\omega_0\}$, $t\geq 0$, intersect $S^1$ at $\vz_1$ and $\vz_2$, respectively. Let $\omega_1 = (\vz_1 - \vy_2)/|\vz_1 - \vy_2|$. See figure \ref{fig_proof}. Then, $\text{supp}(\mu) \cap \{\vy_2 + \Lambda_{\omega_1}\} = \emptyset$, where $\Lambda_{\omega_1}$ is some small neighborhood of $\omega_1$ on $S^1$.  Note, by assumption, $\mu = 0$ outside of $S^1$. Thus, by Theorem \ref{natt}, $\mu = 0$ on 
$$\Delta_{\omega_1} = \cup_{\vv \in C} \{\vv + t\omega_1: t\geq 0\},$$
where 
$$\cup_{\vv \in C} \{\vv + t\omega : \omega \in \tilde{S^1}, t\geq 0 \} \subset (\Delta_{\omega_0} \cup \Delta_{\omega_1}),$$
and where $\tilde{S^1}$ is the shortest curve connecting $\omega_0$ and $\omega_1$ on $S^1$. Continuing this process, noting that the $\omega_k$ will eventually make a full rotation of $S^1$ after a finite number of steps (the number of iterations needed would depend on the size and location of $C$), we can see that $\mu = 0$ on 
$$\cup_{\vv \in C} \{\vv + t\omega : \omega \in S^1, t\geq 0\} = \mathbb{R}^2,$$
which finishes the proof.
\end{proof}

\begin{figure}
\centering
\begin{tikzpicture}[scale=3]
\draw [->] (0,-1.4)--(0,1.4)node[left]{$x_2$};
\draw [->] (-1.1,0)--(1.4,0)node[below]{$x_1$};
\draw (0,0) circle (1.2);
\draw [blue] (0.5,0.5) circle (0.25);
\draw (0.25,-1.3)--(0.25,1.3)node[above]{$L_1$};
\draw (0.75,-1.3)--(0.75,1.3)node[above]{$L_2$};
\node at (0.5,0.175) {$C$};
\node at (0.2,1.1) {$\vz_1$};
\node at (0.85,0.95) {$\vz_2$};
\node at (0.15,0.5) {$\vy_1$};
\node at (0.85,0.5) {$\vy_2$};
\draw [dashed] (0.25,1.2)--(0.75,0.5);
\draw [dashed] (-0.15,1.15)--(0.25,0.5);
\node at (-1.3,0) {$S^1$};
\draw [->] (0.5,0.5)--(0.5,0.7)node[above]{$\omega_0$};
\draw [->] (0.5,0.5)--(0.4,0.65)node[above]{$\omega_1$};
\end{tikzpicture}
\caption{Geometry of Theorem \ref{rec_mu}. In this case $\omega_0$ is parallel to the $x_2$ axis and $\omega_1$ is parallel to the dashed lines. $S^1$ is the larger circle and $C$ is the smaller blue circle.}
\label{fig_proof}
\end{figure}

We can now generalize this result to $n$-dimensions.

\begin{corollary}\label{corr_det_mu}
Let $\Omega \Subset B = \{|\vx| <1\} \subset \mathbb{R}^n$ have smooth boundary $A = \partial \Omega$ and let there exist $\vv_0 \in A$ and some open $\Lambda \subset S^{n-1}$ such that $\{\vv_0 + t\omega : t\geq 0\}$ misses $\text{supp}(\mu)$ for any $\omega \in \Lambda$. Let us further assume $\mu \in L^\infty(\mathbb{R}^n)$ is such that $\text{supp}(\mu) \subset B$ and that $\mu = 0$ restricted to the $\Omega$. Then if $D_0\mu(\vv,\omega) = 0$ for any $\vv\in A$ and $\omega \in S^{n-1}$, $\mu = 0$.
\end{corollary}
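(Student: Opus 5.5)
The plan is to reduce to the two-dimensional Theorem \ref{rec_mu} by slicing with 2-planes through the point $\vv_0$. Fix the open set $\Lambda \subset S^{n-1}$ and pick $\omega_\ast \in \Lambda$.

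For any 2-plane $P$ through $\vv_0$ containing $\omega_\ast$, I will check the hypotheses of Theorem \ref{rec_mu} for the restriction $\mu|_P$.
\begin{enumerate}
\item $P \cap B$ is a disc of radius at most one. After rescaling and recentring, which does not affect vanishing of $D_0$, it plays the role of the unit disc.
\item $\mu|_P \in L^\infty$ is supported in $P \cap B$. Strictly, one should restrict the full-dimensional data and note that $D_0\mu(\vv,\omega)$ for $\vv,\omega$ in $P$ only involves $\mu|_P$. Since $\mu$ is merely $L^\infty$, restriction to a plane is defined only for almost every plane; I address this below.
\item $\Lambda \cap P$ is a nonempty open arc of $S^1_P$ containing $\omega_\ast$, and the rays from $\vv_0$ in these directions miss $\text{supp}(\mu)$.
\item The 2-D argument in the proof of Theorem \ref{rec_mu} does not really need $A$ to be the full boundary of a smooth planar domain. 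It only uses a small circle $C \subset \overline{\Omega \cap P}$ through $\vv_0$ with $\mu = 0$ inside, and $D_0\mu|_P(\vv,\omega) = 0$ for $\vv \in C$ and $\omega \in S^1_P$.
\end{enumerate}

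To supply item 4, I first use the hypotheses in $\mathbb{R}^n$. Since $\mu = 0$ on $\Omega$ and $D_0\mu = 0$ from every point of $A = \partial\Omega$ in every direction, $D_0\mu(\vv,\omega) = 0$ for every $\vv \in \overline\Omega$. Indeed, the ray from an interior point passes through $\Omega$, where $\mu = 0$, until it exits through a point of $A$. The ray in direction $\omega$ from $\vv$ leaves $\overline\Omega$ at a last boundary point $\vv'$, and $D_0\mu(\vv,\omega) = D_0\mu(\vv',\omega) = 0$. Now choose a small ball $\tilde B \subset \overline\Omega$ with $\vv_0 \in \partial \tilde B$, which exists by interior smoothness of $\Omega$. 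Its intersection with $P$ is a disc whose boundary circle $C$ passes through $\vv_0$ whenever $P$ contains the inner normal at $\vv_0$.

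The case split on planes is as follows.
\begin{enumerate}
\item Restrict first to planes containing both $\omega_\ast$ and the normal $\nu(\vv_0)$. If $\omega_\ast \parallel \nu$, perturb $\omega_\ast$ within $\Lambda$.
\item These planes only cover the set spanned by those two vectors, which is insufficient when $n \geq 4$. For $n=3$ it is just one plane, so this too is insufficient.
\item To fix this, iterate. Once $\mu|_{P_0} = 0$ on one good plane $P_0$, I get more points with clear cones. The rays from $\vv_0$ in an open set of directions, namely all directions in $\Lambda$, miss $\text{supp}(\mu)$.
\item It is easier to bypass slicing through $\vv_0$ alone and instead use a family of interior balls. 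Take any 2-plane $P$ meeting $\Omega$ that contains a direction $\omega \in \Lambda$. Using Theorem \ref{natt} directly in $\mathbb{R}^n$ with a curve $A$ equal to a circle in $\partial\tilde B$ through $\vv_0$, we get $\mu = 0$ on the open measured region $\cup_{\vv \in \text{circle}}\{\vv + \Lambda\}$.
\item This gives new base points $\vv_1$, namely other points of $\partial\tilde B$, near which cones of directions miss $\text{supp}(\mu)$. Repeating this propagates clear cones to every point of $\partial \tilde B$.
\item Then each 2-plane $P$ through the centre of $\tilde B$ satisfies the hypotheses of Theorem \ref{rec_mu} with circle $C = P\cap\partial\tilde B$. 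That theorem yields $\mu|_P = 0$. Planes through the centre sweep all of $\mathbb{R}^n$, so $\mu = 0$.
\end{enumerate}

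The main obstacle is exactly this propagation step: showing that the clear-cone hypothesis at the single point $\vv_0$ spreads to a point of every central slice of $\tilde B$. The planar rotation argument does not immediately transfer. I would first try applying Theorem \ref{natt} along great circles of $\partial\tilde B$ through $\vv_0$, since Theorem \ref{natt} holds in $\mathbb{R}^n$ for a curve $A$. That gives clear cones at all points of such circles. Because $\partial\tilde B$ is connected by great circles through $\vv_0$, this covers the whole sphere.

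A secondary technical point is the restriction of $L^\infty$ functions to planes. I would handle this by using Theorem \ref{natt}, stated for $\mathcal{E}'$, directly in $\mathbb{R}^n$ wherever possible, and by applying Fubini to conclude that $\mu|_P = 0$ for almost every plane, which suffices.
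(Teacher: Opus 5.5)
Your final argument can be made to work, but the detour through propagating clear cones is forced by a false restriction. You note that $C=P\cap\partial\tilde B$ passes through $\vv_0$ when $P$ contains the inner normal, and then treat this as necessary. In fact any 2-plane $P$ through $\vv_0$ meets the sphere $\partial\tilde B$ in a circle through $\vv_0$, and this circle is non-degenerate unless $P$ lies in the tangent hyperplane at $\vv_0$.

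The paper uses exactly this. It picks $\omega_0\in\Lambda$ not tangent to the sphere at $\vv_0$, which is possible since $\Lambda$ is open and the tangent directions form a great $(n-2)$-sphere. Then every plane $P=\vv_0+\mathrm{span}(\omega_0,\omega_0^\perp)$ cuts the sphere in a genuine circle $\gamma\ni\vv_0$. Each such plane also contains the clear arc $\Lambda_\epsilon(\omega_0)\cap P$ at $\vv_0$ itself. So Theorem \ref{rec_mu} applies directly in each such plane, and this pencil of planes containing the line $\vv_0+\mathbb{R}\omega_0$ covers $\mathbb{R}^n$. Your first slicing idea therefore already succeeds, and the ``main obstacle'' never arises.

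Your alternative route is valid but longer. You apply Theorem \ref{natt} in $\mathbb{R}^n$ along great circles of $\partial\tilde B$ through $\vv_0$ to move the clear cone to every point of $\partial\tilde B$, then slice through the centre. This costs an extra use of Theorem \ref{natt}, with its bookkeeping: you must shrink $\Lambda$ to a compactly contained subset, and take $\overline{\tilde B}\cap\partial\Omega=\{\vv_0\}$ so that a neighbourhood $U$ of $\text{supp}(\mu)$ avoids the circle. It gains nothing over the paper's route.

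In your last step, not every plane through the centre qualifies. $P$ must contain a direction of $\Lambda$, otherwise there is no clear arc in $P$ to feed into Theorem \ref{rec_mu}. Planes through the centre containing a fixed $\omega_\ast\in\Lambda$ suffice, and they still cover $\mathbb{R}^n$.

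Two smaller points.
\begin{enumerate}
\item When deducing $D_0\mu=0$ from points of $\overline\Omega$, use the \emph{first} exit point of the ray from $\Omega$, not the last. For non-convex $\Omega$ the ray may leave and re-enter, and $\mu$ need not vanish in between.
\item Your Fubini remark on restricting an $L^\infty$ function to planes addresses something the paper passes over. It applies equally to the paper's pencil, using coordinates $\vx=\vv_0+t\omega_0+r\theta$ with $\theta\in S^{n-1}\cap\omega_0^\perp$, in which $\mathrm{d}\vx=r^{n-2}\,\mathrm{d}t\,\mathrm{d}r\,\mathrm{d}\theta$.
\end{enumerate}
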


\begin{proof}
Let $D_0\mu(\vv,\omega) = 0$ for any $\vv\in A$ and $\omega \in S^{n-1}$. Similarly to the proof of Theorem \ref{rec_mu}, choose a sphere $S \subset \Omega$ such that $\vv_0 \in S$. Let $\xi_0$ be the normal to $A$ at $\vv_0$ and let $\omega_0 \in \Lambda$ be such that $\omega \notin T_{\vv_0} S$, i.e., so that $\omega$ is not parallel to the tangent plane to $S$ and $\vv_0$. We can do this since $\Lambda$ is an open subset of $S^{n-1}$. Let
$$\Lambda_\epsilon(\omega_0) = \{ \omega\in S^{n-1} : \omega \cdot \omega_0 > \cos \epsilon\}.$$
Then, for $\epsilon > 0$ small enough, $\{\vv_0 + \Lambda_\epsilon(\omega_0)\}$ misses $\text{supp}(\mu)$  by assumption. Let $\omega_0^{\perp} \in S^{n-1}$ be orthogonal to $\omega_0$ and let us define the 2-D plane
$$P = \{\vv_0 + t\omega_0 + s\omega_0^{\perp} : (t,s) \in \mathbb{R}^2\}.$$
Then, since $\omega_0$ is not parallel to $T_{\vv_0}S$, the intersection $P \cap S = \gamma$ is a 1-D smooth closed curve (a circle) embedded in $P$. We now work in $P$ and apply Theorem \ref{rec_mu}. Let $\tilde{\mu}(s,t)$ be the restriction of $\mu$ to $P$. Then, $D_0\tilde{\mu}(\vv,\omega) = 0$ for any $\vv \in \gamma$ and $\omega \in S^1$. Further, $\{\vv_0 + t\omega : t\geq 0\}$ misses $\text{supp}(\tilde{\mu})$ for any $\omega \in \Lambda_\epsilon(\omega_0) \cap P$. $\tilde{\mu}$ is also clearly zero on the interior of $\gamma$ and is supported on a unit ball. Therefore, the conditions of Theorem \ref{rec_mu} are satisfied and $\mu = 0$ on $P$. We now repeat this argument for all $\omega_0^{\perp}$ in the orthogonal compliment of $\omega_0$ to complete the proof.
\end{proof}

We now relate the above theorems to our application.

\begin{theorem}\label{det_mu}
Let the conditions of Theorem \ref{rec_f} be satisfied and let us assume that every half ray eminating from $\partial \Omega$ intersects $\s$. Further assume that there exists an open $\Lambda \subset S^{n-1}$ of non-zero measure and $\vv_0 \in \partial \Omega$ such that $\{\vv_0 + \Lambda\}$ misses $\text{supp}(\mu)$. Then $\mathcal{D}_{n-2}(f,\mu)$ determines $f$ and $\mu$ uniquely.
\end{theorem}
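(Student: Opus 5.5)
The plan is to chain the results already established: Theorem \ref{rec_f} recovers $f$, the log identity \eqref{recov_mu} reduces the data to divergent beam integrals of $\mu$, and Corollary \ref{corr_det_mu} gives injectivity of $D_0$.

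First, since the hypotheses of Theorem \ref{rec_f} hold, $\mathcal{D}_{n-2}(f,\mu)$ determines $\Omega$ and $I_0$. Hence $f=I_0\chi_\Omega$ and $X_{w_2}f$ are known. Next, for any $(\vv,\omega)$ with $L_{\vv,\omega}\cap \Omega\neq\emptyset$ we have $X_{w_2}f(\vv,\omega)>0$, because $w_2>0$ on $B$. Taking the negative log as in \eqref{recov_mu} then gives $X_{w_1}\mu(\vv,\omega)$ on all such lines.

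By the construction of $w_1$, and since $\text{supp}(\mu)\cap\Omega_c=\emptyset$, the weight $w_1$ equals $1$ on the segment of $L_{\vv,\omega}$ from $\s$ to the exit point of the line from $\Omega$ (modulo the cutoff region, where $\mu$ vanishes). It equals $0$ beyond that point. Therefore $X_{w_1}\mu$ equals $D_0\mu(\vv',-\omega)$ for every $\vv'\in\partial\Omega$ and every direction whose half ray from $\vv'$ leaves $\Omega_c$ toward $\s$. The hypothesis that every half ray from $\partial\Omega$ meets $\s$ ensures this covers all $(\vv',\omega')\in\partial\Omega\times S^{n-1}$. For half rays re-entering $\Omega$, note that $\mu=0$ on $\Omega_c$, so the integral over the ray equals the integral from the last exit point of $\Omega_c$. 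That value is again a measured quantity, obtained from a line through $\Omega$. The data thus determine $D_0\mu(\vv,\omega)$ for all $\vv\in A=\partial\Omega$ and $\omega\in S^{n-1}$.

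For uniqueness, take two pairs $(f,\mu)$ and $(f',\mu')$ giving the same data. Theorem \ref{rec_f} gives $f=f'$, and the previous step gives $D_0(\mu-\mu')=0$ on $A\times S^{n-1}$. I will apply Corollary \ref{corr_det_mu} to $\nu=\mu-\mu'$. This $\nu$ is $L^\infty$, supported in $B$ (rescaling $B$ to a ball if needed), and zero on $\Omega\subset\Omega_c$.

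The main obstacle is the missing-cone hypothesis. It must hold for $\text{supp}(\nu)$, whereas we only assume $\{\vv_0+\Lambda\}$ misses $\text{supp}(\mu)$. I would first try to show that the same cone misses $\text{supp}(\mu')$. For $\omega\in\Lambda$, the line through $\vv_0$ in direction $-\omega$ passes through $\Omega$, so the data give $D_0\mu'(\vv_0,\omega)=D_0\mu(\vv_0,\omega)=0$. Since $\mu'$ has the nonnegative form \eqref{eq:muform} with $\Phi_k>0$, the integral $D_0\mu'(\vv_0,\omega)$ vanishes only if the ray misses the interiors of the $\Omega_k$. The supports are closures of these interiors, so up to shrinking $\Lambda$ to an open subset the ray misses $\text{supp}(\mu')$.

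It follows that $\{\vv_0+\Lambda\}$ misses $\text{supp}(\nu)$. Corollary \ref{corr_det_mu} then yields $\nu=0$, i.e.\ $\mu=\mu'$, which proves that $\mathcal{D}_{n-2}(f,\mu)$ determines $f$ and $\mu$ uniquely.
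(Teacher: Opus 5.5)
Your proposal is correct and follows the same chain as the paper: Theorem \ref{rec_f} recovers $f$, the identity \eqref{recov_mu} yields $D_0\mu$ on $\partial\Omega\times S^{n-1}$, and Corollary \ref{corr_det_mu} gives uniqueness. The paper applies Corollary \ref{corr_det_mu} directly to $\mu$ rather than to the difference $\mu-\mu'$, so your positivity argument (using $\Phi_k>0$ in \eqref{eq:muform}) supplies a step the paper leaves implicit, namely that the cone $\{\vv_0+\Lambda\}$ also misses $\text{supp}(\mu')$.
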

\begin{proof}
As the conditions of Theorem \ref{rec_f} are satisfied, $f$ is determined. In the discussions following Theorem \ref{rec_f}, we showed that once $f$ is recovered, we can determine $D_0\mu(\vv,\omega)$ (i.e., the linear transform of $\mu$) for any $\vv \in \partial \Omega$ and $\omega$ such that $L_{\vv,\omega}$ intersects $\s$. By our assumption that every half ray eminating from $\partial \Omega$ intersects $\s$, this means that we can determine $D_0\mu(\vv,\omega)$ for any $\vv \in \partial \Omega$ and $\omega \in S^{n-1}$. Now we aim to apply Corollary \ref{corr_det_mu} with $A = \partial \Omega$. By assumption, there exists an open $\Lambda \subset S^{n-1}$ and $\vv_0 \in A$ such that $\{\vv + \Lambda\}$ misses $\text{supp}(\mu)$, and $A$ is a smooth closed surface. Since we are also assuming that $\text{supp}(\mu) \cap \Omega_c = \emptyset$, $\mu = 0$ on the interior of $A$, and $\mu$ is compactly supported. We can therefore apply Corollary \ref{corr_det_mu} to show that $\mu$ is unique to the data.
\end{proof}

\begin{corollary} \label{cor:uniqueness}
Under the conditions of Theorem \ref{det_mu}, $\mathcal{R}(f,\mu)$ determines $f$ and $\mu$ uniquely.
\end{corollary}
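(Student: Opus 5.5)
The plan is to reduce Corollary \ref{cor:uniqueness} to Theorem \ref{det_mu}. It suffices to show that $\mathcal{R}(f,\mu)$ determines the transformed data $\mathcal{D}_{n-2}(f,\mu)$. Then any two pairs $(f,\mu)$, $(f',\mu')$ satisfying the hypotheses with $\mathcal{R}(f,\mu)=\mathcal{R}(f',\mu')$ have equal $\mathcal{D}_{n-2}$ data, and hence coincide by Theorem \ref{det_mu}.

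The key step is to apply Corollary \ref{corr_1} to the function $F_\vv = f e^{-G\mu(\cdot,\vv)}$ for each fixed vertex $\vv\in\s$. As noted after \eqref{orig_data}, for fixed $\vv$ we have $\mathcal{R}(f,\mu)(\vv,\cdot,\cdot) = R F_\vv(\vv,\cdot,\cdot)$. The weight $e^{-G\mu(\vx,\vv)}$ depends on $\vx$ only through the segment from $\vv$ to $\vx$, so in polar coordinates about $\vv$,
\[
F_\vv(\vv+\rho\omega) = f_\vv(\rho,\omega)\, e^{-\int_0^\rho \mu_\vv(r,\omega)\,\mathrm{d}r}.
\]
Fix a transcendental $h_0\in(-1,1)$. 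Compute the spherical harmonic coefficients $\mathcal{R}(f,\mu)_{lm}(\vv,h_0)$ from the data. Then \eqref{exp_1}, applied to $F_\vv$, gives
\[
D_{n-2}F_\vv(\vv,\omega) = \int_0^\infty \rho^{n-2} F_\vv(\vv+\rho\omega)\,\mathrm{d}\rho = \mathcal{D}_{n-2}(f,\mu)(\vv,\omega),
\]
by the definition \eqref{trans_data}. Note that Theorem \ref{thm_1} is proved vertex by vertex, with $\vv$ recentered to $0$. It therefore applies to a vertex-dependent function $F_\vv$, since only the values at that single vertex are used.

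The remaining issue is to justify that Corollary \ref{corr_1} applies to $F_\vv$. This regularity check is the main, if mild, obstacle. Here $F_\vv$ is bounded, compactly supported in $B$ (away from $\vv$, since $B\cap\s=\emptyset$), and piecewise smooth. So the radial integrals $\int_0^\infty \rho^{n-2}(F_\vv)_{lm}(\rho)\,\mathrm{d}\rho$ are well defined. The expansion $\sum_{lm}(\cdot)Y_{lm}(\omega)$ converges in $L^2(S^{n-1})$ because the function $\omega\mapsto D_{n-2}F_\vv(\vv,\omega)$ is bounded. Identity \eqref{exp_1} then holds in $L^2(S^{n-1})$ for each $\vv$, which determines $\mathcal{D}_{n-2}(f,\mu)(\vv,\cdot)$ almost everywhere. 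Continuity of this function in $\omega$ (the cut-off geometry being nice) pins it down everywhere. Interchanging the sum and the integral follows from Fubini, since $F_\vv\in L^2$ on the relevant annulus.

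Finally, the hypotheses of Theorem \ref{det_mu} are conditions on $(f,\mu)$ and the geometry only, not on the form of the data. So they carry over verbatim, and the conclusion follows.
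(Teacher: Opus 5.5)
Your proposal is correct and follows the paper's proof. The paper's proof simply invokes Corollary \ref{corr_1} to pass from $\mathcal{R}(f,\mu)$ to $\mathcal{D}_{n-2}(f,\mu)$ and then applies Theorem \ref{det_mu}. Your vertex-by-vertex application to $F_\vv = f e^{-G\mu(\cdot,\vv)}$ and the $L^2(S^{n-1})$ convergence check make explicit details the paper leaves implicit.
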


\begin{proof}
By Corollary \ref{corr_1}, $\mathcal{R}(f,\mu)$ determines $\mathcal{D}_{n-2}(f,\mu)$. To get the result, we then apply Theorem \ref{det_mu}.
\end{proof}

\begin{remark}
An example where Theorem \ref{det_mu} and Corollary \ref{cor:uniqueness} apply is when $\s = S^{n-1}$ and $f$ and $\mu$ are supported on its interior, as pictured in figure \ref{fig1}. The condition we have used throughout that $\{\vv_0 + \Lambda\}$ misses $\text{supp}(\mu)$, while not generally true, would hold in general if the number of attenuating objects (the $\Omega_i$) are not too many or too large relative to the size of the source. For example, if $\s = S^{n-1}$, and there is only one attenuating domain $\Omega_1$, which is contained in a ball within $S^{n-1}$, then there is always an open set of rays eminating from $\partial \Omega$ which miss $\Omega_1$ as in the theorem conditions. If we assume that $\Omega$ and the $\Omega_i$ can be constrained to balls, then one could imagine using ideas from sphere packing to determine if there are rays from the source that do not attenuate. That is, is it possible to pack a set of $m$ balls of given radius $r$ which contain attenuating material around $\Omega$ so that all photons eminating from the source attenuate before reaching $\s$. Can we give bounds on $m$ and $r$ so the solution is unique? This is an interesting problem, but beyond the scope of this paper.
\end{remark}

While we have shown under certain conditions that $\mu$ is unique to the data, the problem of recovering $\mu$ is one of exterior tomography and is highly ill-posed, and thus we cannot expect to recover $\mu$ accurately. From a microlocal perspective, there are many missing wavefronts, and so we would expect the edges of $\mu$ to be blurred out and unresolved in the reconstruction without sufficient a-priori information. We can still use the values of $D_0\mu$ (calculated as described above) to attempt a reconstruction from limited ray integral data, which we will explore in the next section using algebraic methods.

\section{Simulations}
\label{results}
In this section, we conduct simulated experiments to help illustrate our theory. We consider the geometry of figure \ref{fig1}. We set $n=2$ and $\s = \{|\vx| = r\}$ is the circle radius $r = 128$ with center at the origin. The reconstrution space is the square region $[-r,r]^2$, and the image resolution is $2r \times 2r$. We consider reconstruction of the gamma ray intensity and attenuation phantoms shown in figure \ref{F1}. $f = I_0\chi_\Omega$ is the characteristic function on the non-convex region pictured and $\mu$ is a characteristic on the union of four balls. One can check easily that the chosen geometry and $f$ and $\mu$ satisfy the geometric conditions specified in Assumption \ref{geo_ass}.
\begin{figure}[!h]
\centering
\begin{subfigure}{0.27\textwidth}
\includegraphics[width=0.9\linewidth, height=3.5cm, keepaspectratio]{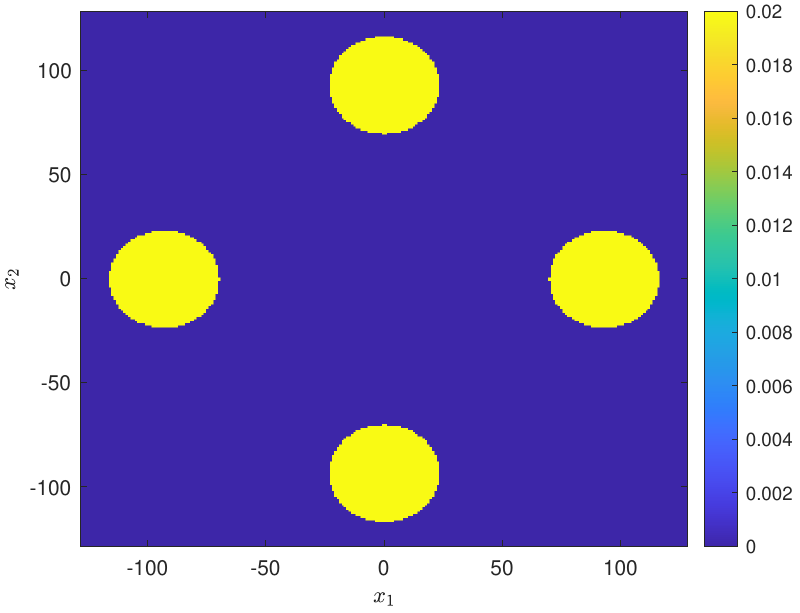}
\subcaption{$\mu$}
\end{subfigure}
\begin{subfigure}{0.27\textwidth}
\includegraphics[width=0.9\linewidth, height=3.5cm, keepaspectratio]{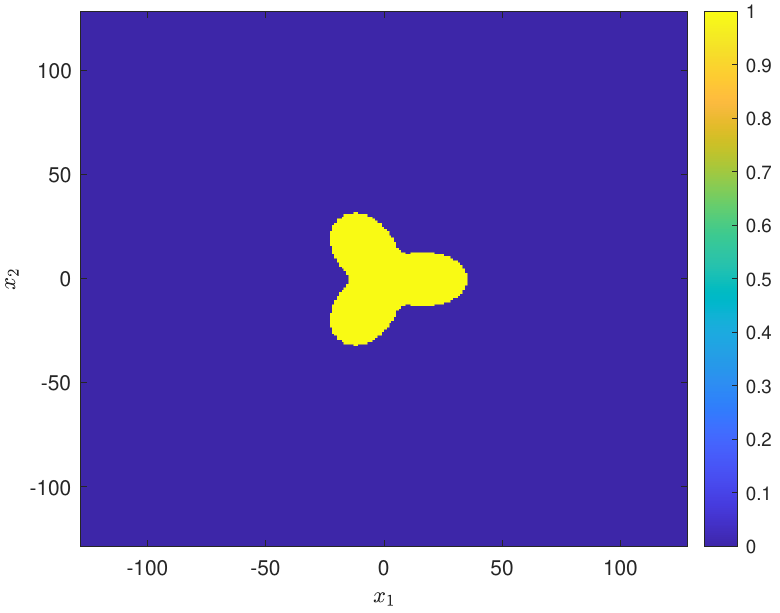}
\subcaption{$f$}
\end{subfigure}
\caption{Attenuation coefficient of surrounding material (left) and gamma ray source (right).}
\label{F1}
\end{figure}
The gamma ray source intensity value $I_0$ is 1 and the attenuation coefficient has value $1/50$. We simulate data, $\mathcal{R}(f,\mu)$, using \eqref{orig_data} with $\alpha = \pi/2$ (i.e., the opening angle between the V-lines is $\pi/2$) and add Gaussian noise with mean zero and standard deviation $\sigma$ to simulate measurement errors. From here, we calculate $g = \mathcal{D}_0(f,\mu)(\vv,\omega)$ using the algorithm described in Theorem \ref{simple_algo}. As noted in Remark \ref{remark_noise}, since $\alpha = \pi/2$, the corresponding noise on the $\mathcal{D}_0(f,\mu)$ data has standard deviation $\sqrt{2}\sigma$. 

We parameterize $\vv = r\Theta$, where $\Theta = (\cos\theta,\sin\theta)$ and $\theta$ is sampled uniformly on $[0,2\pi]$ at 360 steps. $\omega = \omega(t)$ is parameterized using $t \in \mathbb{R}$, where
$$\omega(t) = \frac{\omega'(t) - r\Theta}{|\omega'(t)-r\Theta|},\ \ \ \omega'(t) = -r\Theta + t\Theta^{\perp},$$
and $t \in [-402,402]$, sampled uniformly at 1025 steps. Here, $t$ parametrizes the plane tangent to $\s$ at $-r\Theta$. This is a classical fan-beam geometry for the 2-D Radon transform, where $r\Theta$ is the source point and $P = \{-r\Theta + t\Theta^{\perp} : t \in [-402,402]\}$ represents the detector plane. Throughout this section, we set $\sigma = 0.13$, which equates to $1\%$ relative least-squares error on the raw data. In figure \ref{F2}, we present sinograms of the simulated $\mathcal{D}_0(f,\mu)(\vv,\omega) = \mathcal{D}_0(f,\mu)(\theta,t)$ data.
\begin{figure}[!h]
\centering
\begin{subfigure}{0.27\textwidth}
\includegraphics[width=0.9\linewidth, height=3.5cm, keepaspectratio]{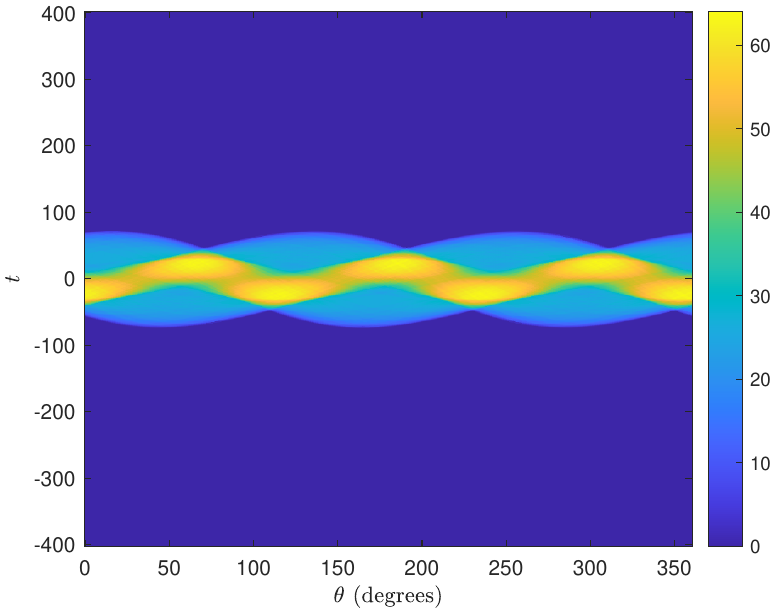}
\subcaption{$D_0f$}
\end{subfigure}
\begin{subfigure}{0.27\textwidth}
\includegraphics[width=0.9\linewidth, height=3.5cm, keepaspectratio]{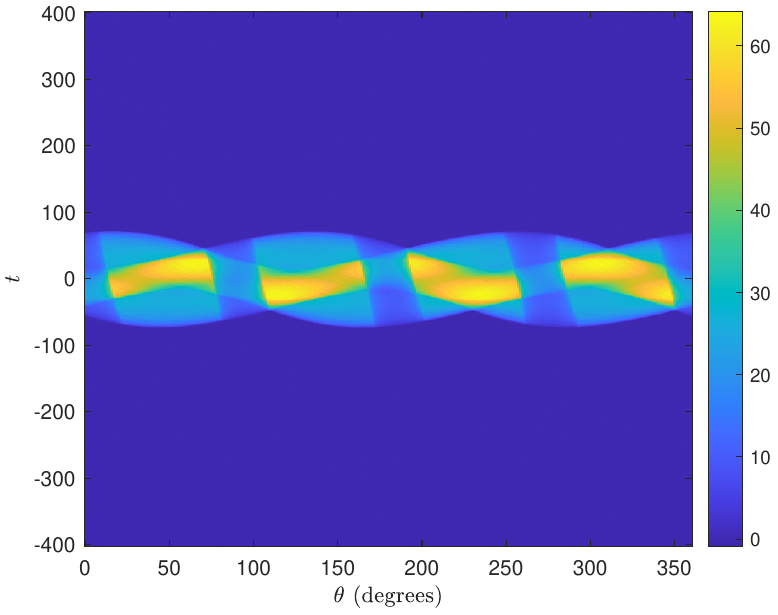}
\subcaption{$\mathcal{D}_0(f,\mu)$}\label{F2b}
\end{subfigure}
\caption{$\mathcal{D}_0(f,\mu)$ sinograms  with $1\%$ noise under the linear formulation, i.e., when $\mu = 0$ (left), and when $\mu$ is as in figure \ref{F1} (right).}
\label{F2}
\end{figure}
For comparison, we show $\mathcal{D}_0(f,\mu)$ side-by-side with the linear sinogram, i.e., $D_0f$. The linear sinogram is classical 2-D Radon transform data in the fan-beam geometry discussed above. When $\mu$ is non-zero and set as in figure \ref{F1}, much of the signal is suppressed in the sinogram due to attenuation, and the regions of the sinogram where the signal is lower are four-fold periodic in $\theta$ given the analogous rotational symmetry of $\mu$.
\subsection{Recovering the convex hull of $f$}
Now, we discuss how the convex hull, $\Omega_c$ of $f$ is recovered. To recover $\Omega_c$, we use Proposition \ref{prop_hull} and \eqref{equ_hull}. To use \eqref{equ_hull}, we need to determine $\Gamma_0$ (as defined in Proposition \ref{prop_hull}), which describes the zeros of the clean data $\mathcal{D}_0(f,\mu)$. We now describe a method to determine $\Gamma_0$ in the presence of noise using ideas from change-point detection.
\begin{figure}[!h]
\centering
\begin{subfigure}{0.4\textwidth}
\includegraphics[width=0.9\linewidth, height=4.5cm, keepaspectratio]{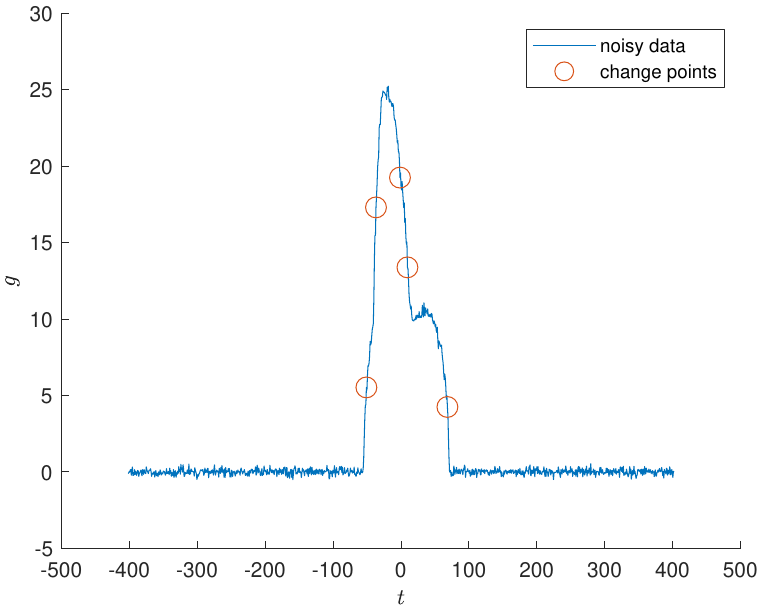}
\end{subfigure}
\caption{$\mathcal{D}_0(f,\mu)(0,\cdot)$ plot with change points highlighted.}
\label{F3}
\end{figure}

See figure \ref{F3}, where we have plotted $\mathcal{D}_0(f,\mu)(0, \cdot)$ (i.e., the first vertical line profile of the image in figure \ref{F2b}). We see large jumps in signal when the data has singularities. Before the first jump and after the last jump, the noiseless data is zero (in-between, the values are strictly greater than zero), and we use this idea to determine $\Gamma_0$. To calculate where the jumps occur in the presence of noise, we use the Matlab function ``findchangepts", setting the maximum number of change points to 5. The values before the first change point and after the last change point are determined to be the zeros of the data. We repeat this process for all line-profiles of the sinogram in figure \ref{F2b} to calculate $\Gamma_0$. After $\Gamma_0$ is determined, we use \eqref{equ_hull} to calculate $\Omega_c$.

\begin{figure}[!h]
\centering
\begin{subfigure}{0.27\textwidth}
\includegraphics[width=0.9\linewidth, height=3.5cm, keepaspectratio]{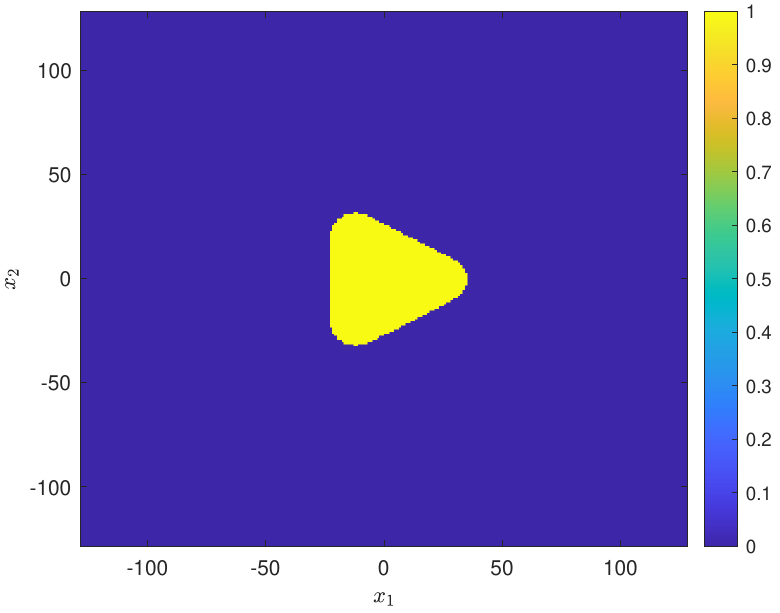}
\subcaption{$\Omega_c$}
\end{subfigure}
\begin{subfigure}{0.27\textwidth}
\includegraphics[width=0.9\linewidth, height=3.5cm, keepaspectratio]{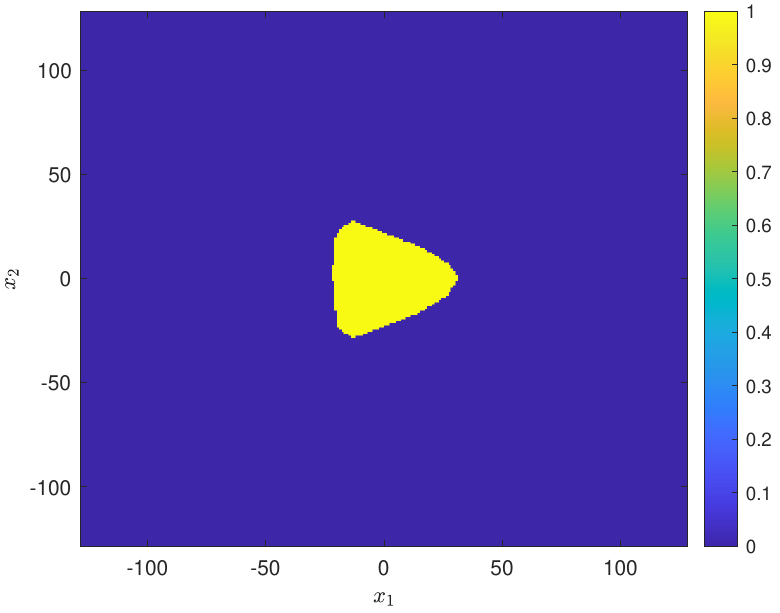}
\subcaption{reconstruction}\label{F4b}
\end{subfigure}
\begin{subfigure}{0.27\textwidth}
\includegraphics[width=0.9\linewidth, height=3.5cm, keepaspectratio]{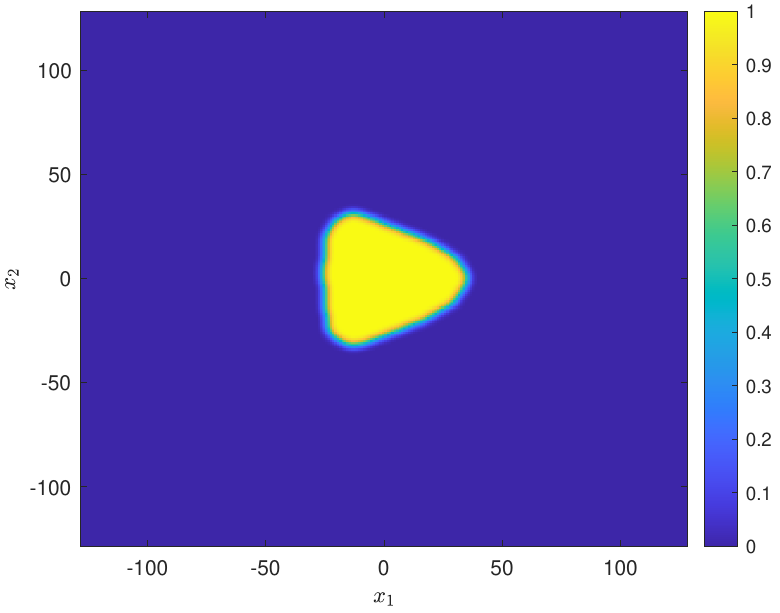}
\subcaption{$\psi$}\label{F4c}
\end{subfigure}
\caption{A reconstruction of $\Omega_c$. We show the exact $\Omega_c$ (left), which was calculated using noiseless data, the reconstruction (middle), and a smooth cutoff $\psi$ with support on a slightly larger set than the reconstructed $\Omega_c$.}
\label{F4}
\end{figure}

In figure \ref{F4}, we show a reconstruction of $\Omega_c$ using the noisy sinogram in figure \ref{F2b} and compare to the ``exact" convex hull calculated using noiseless data. In figure \ref{F4c}, we convolve the reconstruction in figure \ref{F4b} with a Gaussian filter to get a smooth cutoff, $\psi$, supported on a slightly larger set than $\Omega_c$. This cutoff will be used as in Corollary \ref{corr_bound} to remove artifacts from the reconstruction of $f$, which we will now discuss in more detail.

\subsection{Recovery of $f$}
Here, we recover $f$ using the method detailed in section \ref{sing_recon}. As in section \ref{sing_recon}, $f$ is reconstructed via the formula
$$f_r = \psi X^{-1}g,$$
where $\psi$ is as in figure \ref{F4c}, $g = \mathcal{D}_0(f,\mu)$ is our data, and $X^{-1}$ in this case is the 2-D inverse Radon transform. To apply $X^{-1}$, we first discretize the corresponding linearized forward operators and apply the Landweber method \cite[Section 5.1]{bertero2021introduction}.
\begin{figure}[!h]
\centering
\begin{subfigure}{0.27\textwidth}
\includegraphics[width=0.9\linewidth, height=3.5cm, keepaspectratio]{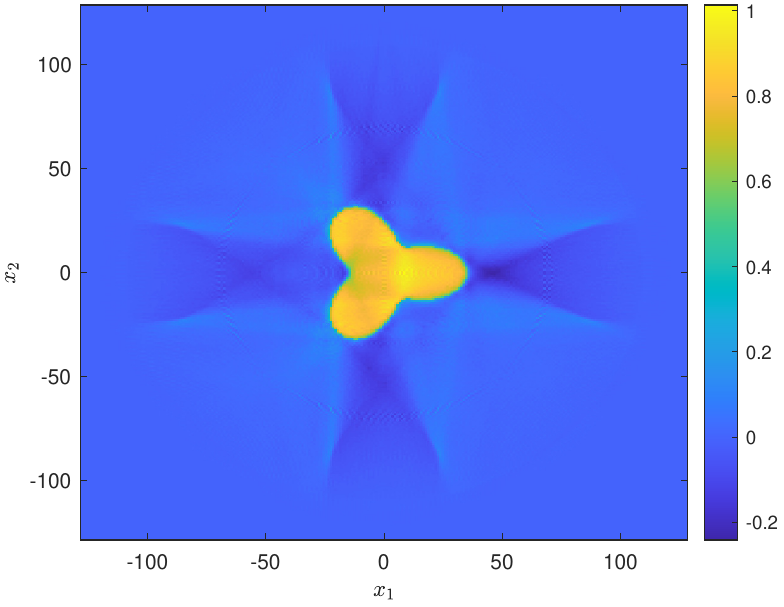}
\subcaption{$X^{-1}g$}\label{F5a}
\end{subfigure}
\begin{subfigure}{0.27\textwidth}
\includegraphics[width=0.9\linewidth, height=3.5cm, keepaspectratio]{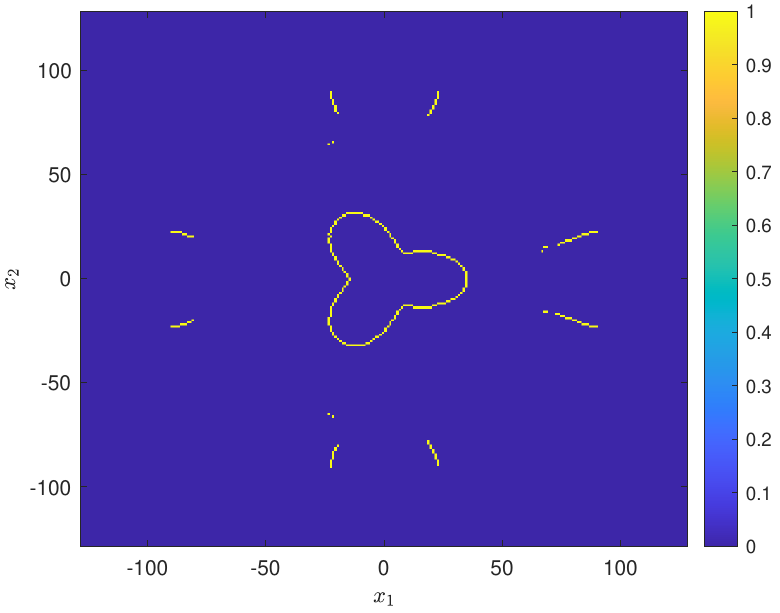}
\subcaption{edges (artifacts)}\label{F5b}
\end{subfigure}
\begin{subfigure}{0.27\textwidth}
\includegraphics[width=0.9\linewidth, height=3.5cm, keepaspectratio]{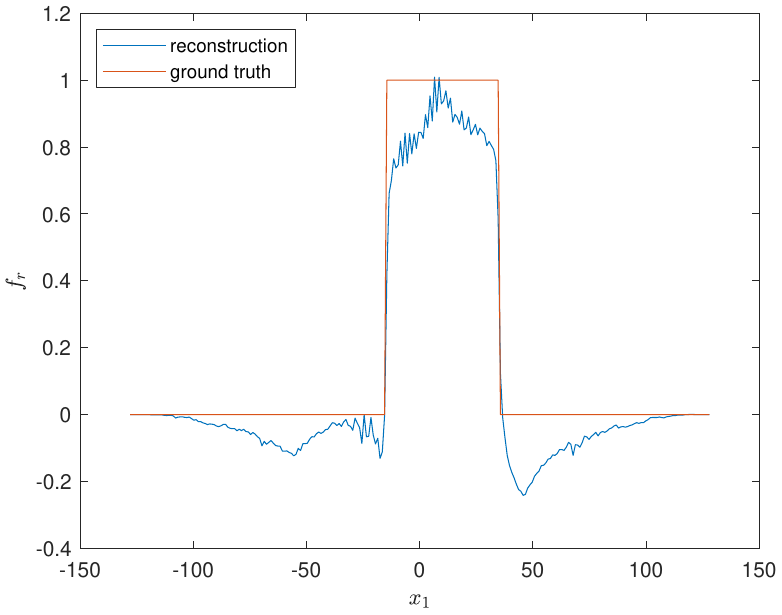}
\subcaption{line-profile of (A)}
\end{subfigure}
\begin{subfigure}{0.27\textwidth}
\includegraphics[width=0.9\linewidth, height=3.5cm, keepaspectratio]{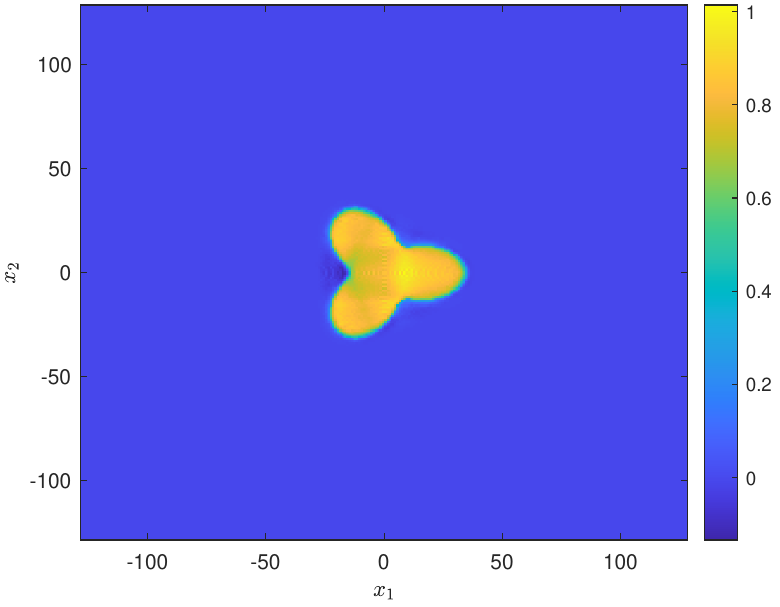}
\subcaption{$f_r = \psi X^{-1}g$}
\end{subfigure}
\begin{subfigure}{0.27\textwidth}
\includegraphics[width=0.9\linewidth, height=3.5cm, keepaspectratio]{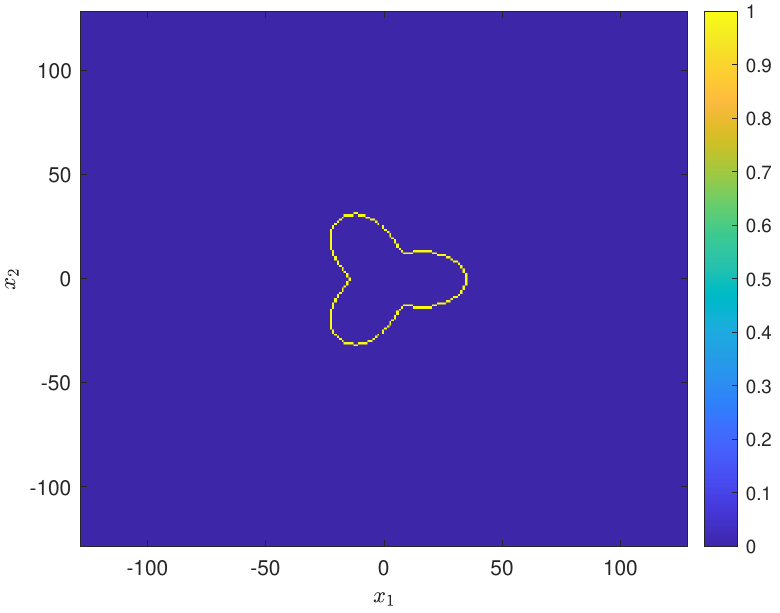}
\subcaption{edges (no artifacts)}\label{F5e}
\end{subfigure}
\begin{subfigure}{0.27\textwidth}
\includegraphics[width=0.9\linewidth, height=3.5cm, keepaspectratio]{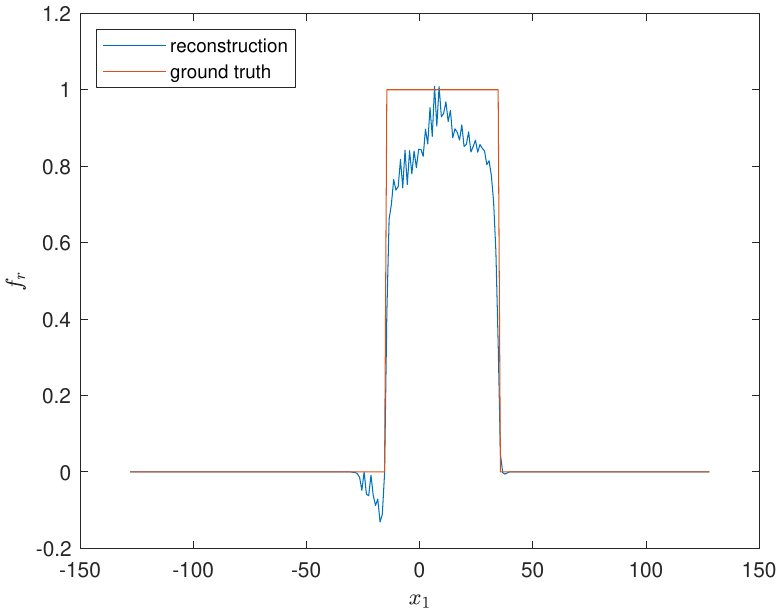}
\subcaption{line-profile of (D)}
\end{subfigure}
\caption{Reconstructions of $f$. The top row corresponds to $X^{-1}g$ and the bottom row to $f_r$. We show the reconstruction (left), an edge map extraction (middle), and the central line-profile along $\{x_2=0\}$ (right).}
\label{F5}
\end{figure}

In figure \ref{F5}, we present our reconstructions. For comparison, we show $f_r$ alongside $X^{-1}g$ (i.e., without multiplying by $\psi$) to show the artifacts and how they are removed. We also estimate the edges of the reconstruction using the Matlab function ``edge" with a Sobel filter and the default settings. In figure \ref{F5a}, we see streaking artifacts in the reconstruction which occur along rays which are tangent to $\mu$ and $f$ simultaneously. In the edge map in figure \ref{F5b}, we see $\partial\Omega$ (i.e., the desired edges of $f$) and the parts of $\Sigma = \text{ssupp}(\mu)$ which are visible in the data. These appear as circular arc segments in the edge map. In Theorem \ref{double} and the following discussions in section \ref{sing_recon}, we predicted that such streaking artifacts would occur and that the strength of the artifacts would be weaker than the visible singularities of $f$ and $\mu$. The edge map extractions find the strongest edges above a certain threshold, and thus it makes sense that the streaks do not appear in figure \ref{F5b}. On the bottom row of figure \ref{F5}, we see the artifacts are largely removed as predicted, and we can recover $\partial \Omega$ as in figure \ref{F5e}.

Now, we are in a position to reconstruct $f = I_0 \chi_\Omega$. To recover $\Omega$, we can simply fill-in the boundary recovered in figure \ref{F5e}. To determine $I_0$, we use the method provided in Theorem \ref{rec_f}. We calculated $I_0$ to be $I_0 = 1.06$ (i.e., $6\%$ error from the true value of 1) in this example. 
\begin{figure}[!h]
\centering
\begin{subfigure}{0.27\textwidth}
\includegraphics[width=0.9\linewidth, height=3.5cm, keepaspectratio]{source_phan}
\subcaption{$f$}
\end{subfigure}
\begin{subfigure}{0.27\textwidth}
\includegraphics[width=0.9\linewidth, height=3.5cm, keepaspectratio]{r2}
\subcaption{$f_r$ ($\eta = 26\%$)}
\end{subfigure}
\begin{subfigure}{0.27\textwidth}
\includegraphics[width=0.9\linewidth, height=3.5cm, keepaspectratio]{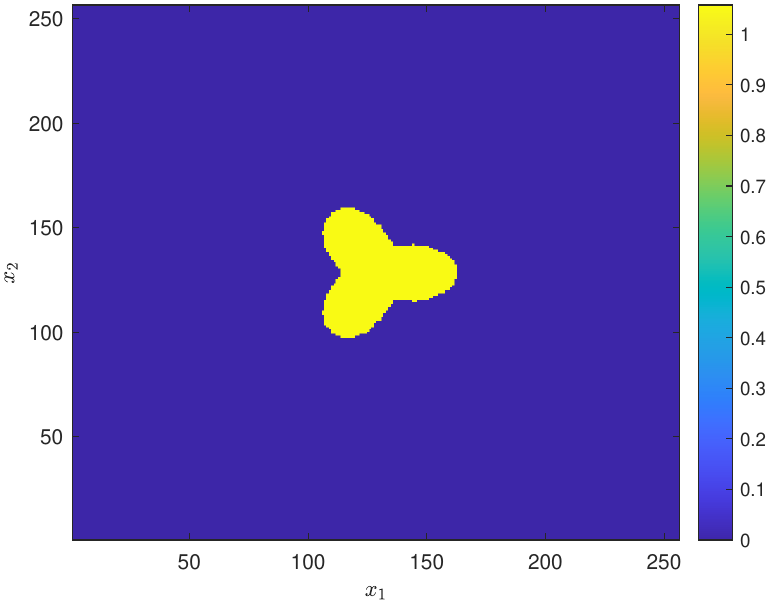}
\subcaption{$f_c$ ($\eta = 21\%$)}\label{F6c}
\end{subfigure}
\caption{The ground truth $f$ (left), $f_r = \psi X^{-1}g$ (middle) and $f_c = I_0 \chi_\Omega$ (right) using the reconstructed $\Omega$ and $I_0$ value. The relative least-squares error values ($\eta$) are given in parenthesis.}
\label{F6}
\end{figure}
See figure \ref{F6}, where we show the ground truth $f$ side-by-side with $f_r = \psi X^{-1}g$ and $f_c = I_0 \chi_\Omega$ using the reconstructed $\Omega$ and $I_0$ value. These are two alternate means of reconstructing $f$, the latter of which uses the a-priori knowledge that $f$ is proportional to a characteristic. Both $f_r$ and $f_c$ accurately identify the location, shape and size of $f$, and $f_c$ (i.e., the reconstruction in figure \ref{F6c}) is the most accurate in terms of least-squares error.

\subsection{Recovery of $\mu$}
In this section, we recover $\mu$ using the data $h = X_{w_1}\mu$ as in \eqref{recov_mu}, where $h$ is determined using the raw data $g$, and our reconstructed $f$ as in \eqref{recov_mu} and the following discussions. In this example, $h$ describes all integrals of $\mu$ over divergent beams eminating from $\partial \Omega$. As above, for reconstruction, we first discretize $X_{w_1}$ and apply the Landweber method to approximate the inverse. After the reconstruction is obtained, we multiply the image by a smooth cutoff which is zero on $\Omega$. This is to remove artifacts which appear near $\partial \Omega$, i.e., the end points of the divergent beams used for reconstruction. Our reconstruction  is presented in figure \ref{F7}.
\begin{figure}[t]
\centering
\begin{subfigure}{0.27\textwidth}
\includegraphics[width=0.9\linewidth, height=3.5cm, keepaspectratio]{att_phan}
\subcaption{$\mu$}
\end{subfigure}
\begin{subfigure}{0.27\textwidth}
\includegraphics[width=0.9\linewidth, height=3.5cm, keepaspectratio]{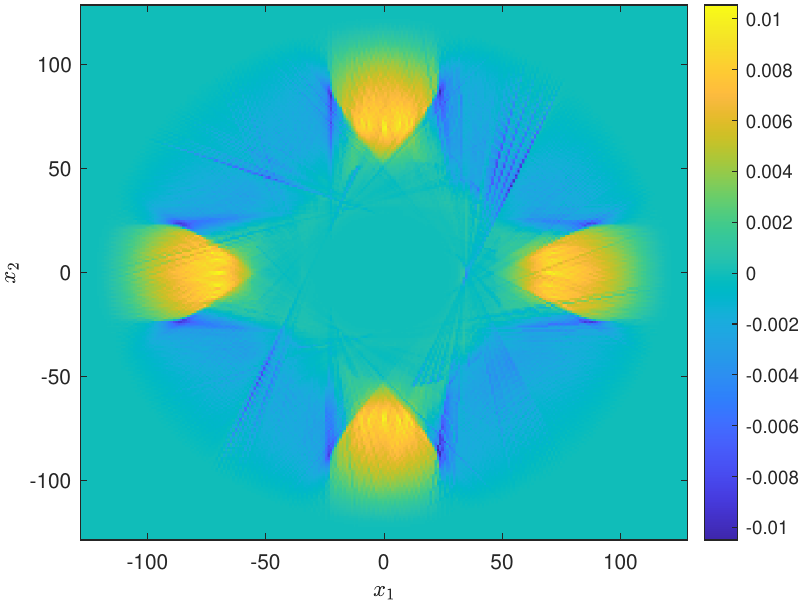}
\subcaption{$\mu_r$}\label{F6b}
\end{subfigure}
\caption{The ground truth $\mu$ and a reconstruction, $\mu_r$, using the data $h$ and the Landweber method.}
\label{F7}
\end{figure}
The reconstruction quality is poor, and only the edges of $\mu$ shown in figure \ref{F5b} (i.e., those visible to the data) are resolved. This is to be expected as the inverse problem is one of exterior tomography and is severely ill-posed. The artifacts observed (e.g., the blurring or ``smearing" of the true edges of $\mu$) are akin to those seen in classical limited angle tomography \cite{frikel2013characterization}.

\section{Conclusion}
This work advances microlocal analysis of operators with non-linearities, which is directly relevant to Compton camera imaging. We notably addressed the case when $\mu$ can have singularities, and described how this led to artifacts in the reconstruction. We used our geometric assumptions to better localize the edges of $f$ and separate these from the edges of $\mu$ and the artifacts. This allowed us to recover $f$ accurately. While we showed that $\mu$ is theoretically recoverable, the resulting reconstruction was not good due to limited angle effects. In this work, the cone vertices were restricted to an $(n-1)$-D surface, and the data was overdetermined. In further work, we aim to consider situations where we don't have so much data (e.g., where the vertices lie on a curve), and investigate any additional artifacts or instabilities which may occur.

\section*{Acknowledgements:} 
The first author wishes to acknowledge funding support from Aspira Women's Health, The Cleveland Clinic Foundation, The Honorable Tina Brozman Foundation, the V Foundation, and the National Cancer Institute R03CA283252-01. Sean Holman was supported by Engineering and Physical Sciences Research Council (EPSRC) through grant EP/V007742/1. {We also wish to thank The Institute for Computational and Experimental Research in Mathematics (ICERM) at Brown University for hosting us for a
workshop in August 2024, which facilitated compelling discussions and advancements in
this project.}

\bibliographystyle{abbrv} 
\bibliography{RefRevolution}

\begin{thebibliography}{10}

\bibitem{ambartsoumian2012inversion}
G.~Ambartsoumian.
\newblock Inversion of the {V}-line {R}adon transform in a disc and its applications in imaging.
\newblock {\em Computers \& Mathematics with Applications}, 64(3):260--265, 2012.

\bibitem{AmbartsoumianLatifi-Vline2019}
G.~Ambartsoumian and M.~J. Latifi~Jebelli.
\newblock The {V}-line transform with some generalizations and cone differentiation.
\newblock {\em Inverse Problems}, 35(3):034003, 29, 2019.

\bibitem{bertero2021introduction}
M.~Bertero, P.~Boccacci, and C.~De~Mol.
\newblock {\em Introduction to inverse problems in imaging}.
\newblock CRC press, 2021.

\bibitem{vline}
J.~Cebeiro, M.~A. Morvidone, and M.~K. Nguyen.
\newblock {The {R}adon transform on {V}-lines: Artifact analysis and image enhancement.}
\newblock In {\em XVII Workshop on Information Processing and Control (RPIC)}, pages 1--6. IEEE, 2017.

\bibitem{chihara2026geodesic}
H.~Chihara.
\newblock Geodesic {X}-ray transform and streaking artifacts on simple surfaces or on spaces of constant curvature: H. chihara.
\newblock {\em Vietnam Journal of Mathematics}, pages 1--50, 2026.

\bibitem{duistermaat1996fourier}
J.~J. Duistermaat and L.~Hormander.
\newblock {\em {F}ourier integral operators}, volume~2.
\newblock Springer, 1996.

\bibitem{franssens2014multiplication}
G.~R. Franssens.
\newblock Multiplication of the distributions $(x\pm i0)^z$.
\newblock {\em Journal of Applied Analysis}, 20(1):15--27, 2014.

\bibitem{frikel2013characterization}
J.~Frikel and E.~T. Quinto.
\newblock Characterization and reduction of artifacts in limited angle tomography.
\newblock {\em Inverse Problems}, 29(12):125007, 2013.

\bibitem{greenleaf1993recovering}
A.~Greenleaf and G.~Uhlmann.
\newblock Recovering singularities of a potential from singularities of scattering data.
\newblock {\em Communications in mathematical physics}, 157(3):549--572, 1993.

\bibitem{holman2020spect}
S.~Holman and P.~Richardson.
\newblock {SPECT} with a multi-bang assumption on attenuation.
\newblock {\em Inverse Problems}, 36(12):125005, 2020.

\bibitem{SeanHolman}
S.~Holman and P.~Richardson.
\newblock Simultaneous recovery of attenuation and source density in {SPECT}.
\newblock {\em Inverse Problems and Imaging}, 17(4):817--840, 2023.

\bibitem{hormanderI}
L.~H{\"o}rmander.
\newblock {\em The analysis of linear partial differential operators. {I}}.
\newblock Classics in Mathematics. Springer-Verlag, Berlin, 2003.
\newblock Distribution theory and {F}ourier analysis, Reprint of the second (1990) edition [Springer, Berlin].

\bibitem{hormanderIII}
L.~H\"{o}rmander.
\newblock {\em The analysis of linear partial differential operators. {III}}.
\newblock Classics in Mathematics. Springer, Berlin, 2007.
\newblock Pseudo-differential operators, Reprint of the 1994 edition.

\bibitem{hormander}
L.~H\"{o}rmander.
\newblock {\em The analysis of linear partial differential operators. {IV}}.
\newblock Classics in Mathematics. Springer-Verlag, Berlin, 2009.
\newblock Fourier integral operators, Reprint of the 1994 edition.

\bibitem{katsevich2026analysis}
A.~Katsevich.
\newblock Analysis of beam hardening streaks in tomography.
\newblock {\em Inverse Problems}, 42(2):025003, 2026.

\bibitem{krishnan2014microlocal}
V.~P. Krishnan and E.~T. Quinto.
\newblock Microlocal analysis in tomography.
\newblock {\em Handbook of mathematical methods in imaging}, pages 1--50, 2014.

\bibitem{kuchment2016three}
P.~Kuchment and F.~Terzioglu.
\newblock Three-dimensional image reconstruction from compton camera data.
\newblock {\em SIAM Journal on Imaging Sciences}, 9(4):1708--1725, 2016.

\bibitem{kuchment2017inversion}
P.~Kuchment and F.~Terzioglu.
\newblock Inversion of weighted divergent beam and cone transforms.
\newblock {\em Inverse Problems and Imaging}, 11(6):1071--1090, 2017.

\bibitem{lee2018introduction}
J.~M. Lee.
\newblock {\em Introduction to Riemannian manifolds}, volume~2.
\newblock Springer, 2018.

\bibitem{monard2015geodesic}
F.~Monard, P.~Stefanov, and G.~Uhlmann.
\newblock The geodesic ray transform on {R}iemannian surfaces with conjugate points.
\newblock {\em Communications in Mathematical Physics}, 337(3):1491--1513, 2015.

\bibitem{moon2025inversion}
S.~Moon and M.~Haltmeier.
\newblock Inversion formulas for the attenuated conical {R}adon transform: Plane and cylinder case.
\newblock {\em Applied Mathematics and Computation}, 489:129159, 2025.

\bibitem{morvidone2010v}
M.~Morvidone, M.~K. Nguyen, T.~T. Truong, and H.~Zaidi.
\newblock On the {V}-line {R}adon transform and its imaging applications.
\newblock {\em International Journal of Biomedical Imaging}, 2010, 2010.

\bibitem{natterer1981identification}
F.~Natterer.
\newblock The identification problem in emission computed tomography.
\newblock In {\em Mathematical Aspects of Computerized Tomography: Proceedings, Oberwolfach, February 10--16, 1980}, pages 45--56. Springer, 1981.

\bibitem{natterer}
F.~Natterer.
\newblock {\em {The mathematics of computerized tomography}}.
\newblock Classics in Mathematics. Society for Industrial and Applied Mathematics (SIAM), New York, 2001.

\bibitem{palacios2018quantitative}
B.~Palacios, G.~Uhlmann, and Y.~Wang.
\newblock Quantitative analysis of metal artifacts in x-ray tomography.
\newblock {\em SIAM Journal on Mathematical Analysis}, 50(5):4914--4936, 2018.

\bibitem{Q1983-rotation}
E.~T. Quinto.
\newblock {The invertibility of rotation invariant Radon transforms}.
\newblock {\em J. Math. Anal. Appl.}, 94:602--603, 1983.

\bibitem{Q1993sing}
E.~T. Quinto.
\newblock {Singularities of the {X}-ray transform and limited data tomography in ${\mathbb R}^2$ and ${\mathbb R}^3$}.
\newblock {\em SIAM J. Math. Anal.}, 24:1215--1225, 1993.

\bibitem{Rudin:FA}
W.~Rudin.
\newblock {\em Functional analysis}.
\newblock McGraw-Hill Book Co., New York, 1973.
\newblock McGraw-Hill Series in Higher Mathematics.

\bibitem{solmon1995identification}
D.~C. Solmon.
\newblock The identification problem for the exponential {R}adon transform.
\newblock {\em Mathematical methods in the applied sciences}, 18(9):687--695, 1995.

\bibitem{stefanov2014identification}
P.~Stefanov.
\newblock The identification problem for the attenuated {X}-ray transform.
\newblock {\em American Journal of Mathematics}, 136(5):1215--1247, 2014.

\bibitem{terzioglu2019some}
F.~Terzioglu.
\newblock Some analytic properties of the cone transform.
\newblock {\em Inverse Problems}, 35(3):034002, 2019.

\bibitem{terzioglu2020exact}
F.~Terzioglu.
\newblock Exact inversion of an integral transform arising in {C}ompton camera imaging.
\newblock {\em Journal of Medical Imaging}, 7(3):032504--032504, 2020.

\bibitem{terzioglu2018compton}
F.~Terzioglu, P.~Kuchment, and L.~Kunyansky.
\newblock Compton camera imaging and the cone transform: a brief overview.
\newblock {\em Inverse Problems}, 34(5):054002, 2018.

\bibitem{Webber2026Microlocal}
J.~Webber and S.~Holman.
\newblock Microlocal analysis of non-linear operators arising in compton ct.
\newblock {\em Inverse Problems}, 42(2), 2026.

\bibitem{yao2022rapid}
Z.~Yao, Y.~Yuan, J.~Wu, X.~Liu, and Y.~Xiao.
\newblock Rapid compton camera imaging for source terms investigation in the nuclear decommissioning with a subset-driven origin ensemble algorithm.
\newblock {\em Radiation Physics and Chemistry}, 197:110133, 2022.

\bibitem{zhang2020recovery}
Y.~Zhang.
\newblock Recovery of singularities for the weighted cone transform appearing in {C}ompton camera imaging.
\newblock {\em Inverse Problems}, 36(2):025014, 2020.

\end{thebibliography}


\end{document}